\definecolor{myblue}{RGB}{0,29,119}
\newtheorem{theorem}{Theorem}[subsection]
\newtheorem{proposition}[theorem]{Proposition}
\newtheorem{corollary}[theorem]{Corollary}
\newtheorem{lemma}[theorem]{Lemma}
\theoremstyle{definition}
\newtheorem{definition}[theorem]{Definition}
\newtheorem{example}[theorem]{Example}
\newtheorem{remark}[theorem]{Remark}
\newtheorem{notation}[theorem]{Notation}
\newenvironment{customthm}[1]
  {\innercustomthm}
  {\endinnercustomthm}
\newcommand\commentout[1]{{}}
\newcommand{\Zz}{{\mathbb Z}} % the set of integer numbers
\newcommand{\Commentout}[1]{} %s
\DeclareMathOperator{\findim}{fin.\! dim}
\DeclareMathOperator{\gldim}{gl\,\! dim}
\DeclareMathOperator{\pdim}{pdim}
\newcommand{\cP}{{\mathcal P}}
\newcommand{\cS}{{\mathcal S}}
\providecommand{\AMS}{$\mathcal{A}$\kern-.1667em%
\lower.25em\hbox{$\mathcal{M}$}\kern-.125em$\mathcal{S}$}
\begin{document}

\title[The {\large$\varphi$}-Dimension of cyclic Nakayama algebras]{The {\Large$\varphi$}-Dimension of cyclic Nakayama algebras}
 
\author{Emre SEN}

\address{Department of Mathematics, Northeastern University, Boston MA}
\email{sen.e@husky.neu.edu}

\maketitle
\begin{abstract}
K. Igusa and G. Todorov introduced the $\varphi$ function which generalizes the notion of projective dimension. We study the behavior of the $\varphi$ function for cyclic Nakayama algebras of infinite global dimension. We prove that the supremum of values of $\varphi$ is always an even number. In particular we show that the $\varphi$-dimension is $2$ if and only if the algebra satisfies certain symmetry conditions. Also we give a sharp upper bound for $\varphi$-dimension in terms of the number of monomial relations which describes the algebra.
\end{abstract}
\tableofcontents
\section{Introduction}

The original motivation for introducing the $\varphi$ function in \cite{todo} was to prove  the finitistic dimension conjecture, which states that $sup\{\pdim M | M \text{ in mod-}\Lambda, \text{  }  \pdim M < \infty\}$ 
is finite.
 %< \infty$. 
 Using the function $\varphi$ it was proved that 
 for certain classes of artin algebras, in particular for algebras with representation dimension 3 the finitistic dimension conjecture holds. At that time it was  shown that many classes of algebras had representation dimension 3 and hence the conjecture was true for those classes. However, it was shown in \cite{ro} that there are algebras of arbitrary representation dimension, hence the conjecture is still open.
  
%It follows from the Definition \ref{varphi} and Remark \ref{phi} that \ $\varphi$ is a generalization of projective dimension, in the sense that $\varphi(M)=proj.dim(M)$ if $proj.dim.(M)<\infty$. However, $\varphi(M)$ is a finite integer even when projective dimension is infinite. Using this as a motivation, Marcello Lanzilotta suggested to treat this function  $\varphi$ as a new invariant of modules of infinite projective dimension, and globally as the invariant of algebras. Lanzilotta proved in \cite{L} that selfinjective algebras can be characterized as algebras with $\varphi (M)=0$ for all modules $M$, or $\varphi$-$dim(\Lambda)=0$ where  $\varphi$-dim$(\Lambda):=sup\{\varphi(M)\ |\ M \in mod \Lambda\}$. Clearly, with this definition, for algebras of finite global dimension it follows that $\varphi$-$dim\Lambda= gl.dim.\Lambda$. 

It follows from Definition \ref{varphi} and Remark \ref{phi} that the $\varphi$ function is a generalization of projective dimension, in the sense that $\varphi(M)=\pdim(M)$ if $\pdim(M)<\infty$. However, $\varphi(M)$ is a finite integer even when projective dimension is infinite. Using this as a motivation, M. Lanzilotta suggested to treat this function  $\varphi$ as a new invariant of modules of infinite projective dimension, and globally as an invariant of algebras (especially algebras of infinite global dimension). It was proved in \cite{lanz} that selfinjective algebras can be characterized as algebras with $\varphi (M)=0$ for all modules $M$, or $\varphi\dim(\Lambda)=0$ where  
$$\varphi\dim(\Lambda):=sup\{\varphi(M)\ |\ M \in \text{mod-}\Lambda\}.$$ 
Clearly, with this definition, for algebras of finite global dimension it follows that $\varphi\dim\Lambda= \gldim\Lambda$. 

In this paper we concentrate  on cyclic Nakayama artin algebras and study their $\varphi$-dimension.
When global dimension is infinite,  we analyse $\varphi$-dimension and prove the following theorems (see Theorem \ref{even} and Theorem \ref{sharpbound}).

\begin{customthm}{(A)}
Let $\Lambda$ be a cyclic Nakayama algebra of infinite global dimension. Then $\varphi\dim\Lambda$ is always an even number. %The sharp bound for $\varphi\dim\Lambda$ is given by $2r$ where $r$ is the number of monomial relations defining $\Lambda$.
\end{customthm}
\begin{customthm}{(B)}
Let $\Lambda$ be a cyclic Nakayama algebra of infinite global dimension. 
%Then $\varphi\dim\Lambda$ is always an even number. 
The sharp bound for $\varphi\dim\Lambda$ is given by $2r$ where $r$ is the number of monomial relations defining $\Lambda$.
\end{customthm}

Indeed, we can extend the Theorem (B) to any cyclic Nakayama algebra by using Gustafson's wellknown result \cite{gust} about upper bounds of Nakayama algebras of finite global dimension.

%\begin{theorem} Let $\Lambda$ be a cyclic Nakayama algebra of infinite global dimension. If $\Lambda$ is Gorenstein, then $Gdim\Lambda=\varphi\dim\Lambda$. \end{theorem}
In section 2
we describe cyclic orderings on various classes of modules induced by the cyclic ordering of the vertices of the quiver. 
Throughout this work the crucial role was played by certain ${\bf\Delta}$-modules, see definition \ref{definitionofdelta}, ${\bf\Delta}$-filtrations, ${\bf\Delta}$-socles and many other standard notions modified to the ${\bf\Delta}$ set up.
In section 3, the $\varphi$-dimension is defined in general and also a particularly nice description is given for Nakayama algebras. The proof of Theorem (A) is in section 4 and the proof of Theorem (B) is in section 5. We are thankful to valuable comments and suggestions of anonymous referee. 
\section{Set up and Notation}
Let $\Lambda$ be a cyclic Nakayama algebra over $N\geq 3$ vertices given by $r\geq 2$ many relations $\boldsymbol\alpha_{k_{2i}}\ldots\boldsymbol\alpha_{k_{2i-1}}=0$ where $1\leq i\leq  r$ and $k_{f}\in\left\{1,2,\ldots,N\right\}$ for quiver $Q$ as in the figure 1. Notice that each arrow $\boldsymbol\alpha_i$ starts at the vertex $i$ and ends at the vertex $i+1$ with the exception $\boldsymbol\alpha_N:N\mapsto 1$.

\begin{figure}\centering
\begin{tikzpicture}
% All nodes, node labels, and loops
\foreach \ang\lab\anch in {90/1/north, 45/2/{north east}, 0/3/east, 270/i/south, 180/{N-1}/west, 135/N/{north west}}{
  \draw[fill=black] ($(0,0)+(\ang:3)$) circle (.08);
  \node[anchor=\anch] at ($(0,0)+(\ang:2.8)$) {$\lab$};
  %\draw[->,shorten <=7pt, shorten >=7pt] ($(0,0)+(\ang:3)$).. controls +(\ang+40:1.5) and +(\ang-40:1.5) .. ($(0,0)+(\ang:3)$);
}

% Top part of circle, arrows between different nodes and their labels
\foreach \ang\lab in {90/1,45/2,180/{N-1},135/N}{
  \draw[->,shorten <=7pt, shorten >=7pt] ($(0,0)+(\ang:3)$) arc (\ang:\ang-45:3);
  \node at ($(0,0)+(\ang-22.5:3.5)$) {$\boldsymbol\alpha_{\lab}$};
}

% Bottom part of circle, arrows between different nodes and their labels
\draw[->,shorten <=7pt] ($(0,0)+(0:3)$) arc (360:325:3);
\draw[->,shorten >=7pt] ($(0,0)+(305:3)$) arc (305:270:3);
\draw[->,shorten <=7pt] ($(0,0)+(270:3)$) arc (270:235:3);
\draw[->,shorten >=7pt] ($(0,0)+(215:3)$) arc (215:180:3);
\node at ($(0,0)+(0-20:3.5)$) {$\boldsymbol\alpha_3$};
\node at ($(0,0)+(315-25:3.5)$) {$\boldsymbol\alpha_{i-1}$};
\node at ($(0,0)+(270-20:3.5)$) {$\boldsymbol\alpha_i$};
\node at ($(0,0)+(225-25:3.5)$) {$\boldsymbol\alpha_{N-2}$};

% Ellipsis
\foreach \ang in {310,315,320,220,225,230}{
 \draw[fill=black] ($(0,0)+(\ang:3)$) circle (.02);
}
\end{tikzpicture}
\caption{Quiver Q}
\end{figure}
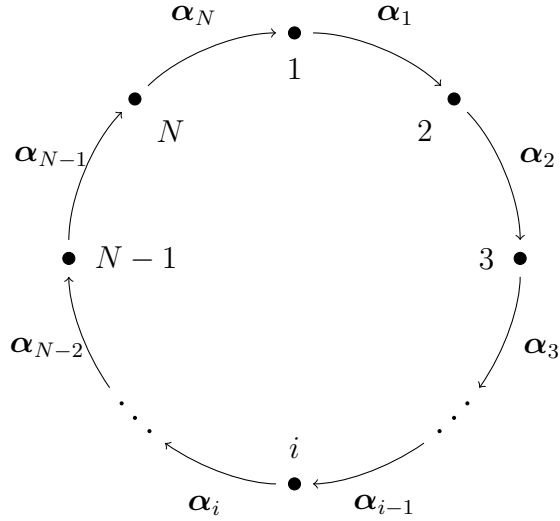

\subsection{Properties of systems of relations}

We assume that the algebra $\Lambda$ is given as the path algebra of the above quiver $Q$ modulo the system of relations REL:
\begin{gather*}
\boldsymbol\alpha_{k_2}\ldots\boldsymbol\alpha_{k_1+1}\boldsymbol\alpha_{k_1}\ \ =0 \\
\boldsymbol\alpha_{k_4}\ldots\boldsymbol\alpha_{k_3+1}\boldsymbol\alpha_{k_3}\ \ =0 \\
%\alpha_{k_6}\ldots\alpha_{k_5+1}\alpha_{k_5}\ \ =0\\
\dots \\
\boldsymbol\alpha_{k_{2r-2}}\ldots\boldsymbol\alpha_{k_{2r-3}+1}\boldsymbol\alpha_{k_{2r-3}}=0\\
\boldsymbol\alpha_{k_{2r}}\ldots\boldsymbol\alpha_{k_{2r-1}+1}\boldsymbol\alpha_{k_{2r-1}}=0
\end{gather*}
Hence $\Lambda\cong\faktor{kQ}{\left<REL\right>}$.
First, we assume that this system of relations is irredundant i.e. none of the relations is a consequence of the other relations. %contained in another one. 
Second, we order them according to their starting index: $1\leq k_1 < k_3<\ldots<k_{2r-1}\leq N$. This gives 
an ordering on the last indices of relation, which is shown in Proposition \ref{cyclicorder} also to be cyclic.
%\gt{ a cyclic order for the indices which appear as the last indices of the relations, as in Proposition \ref{cyclicorder}. 
The induced ordering on the classes of projectives is  called \emph{cyclic ordering of classes of projectives} and is given in Proposition \ref{c-ordering}. %Later we might refer to some system of relations satisfying them in above REL.
%}

\begin{remark} Let REL be the above irredundant system of equations. Then:
\begin{enumerate}[label=\arabic*)]
\item There exists at most one relation starting at each $k_j\in[1,N]$.
\item There exists at most one relation ending at each $k_j\in[1,N]$.
\item There is no restriction on the lengths of relations except that the system of relations has to be irredundant. 
\item Because $\Lambda$ is cyclic Nakayama algebra, none of the simple modules are projective hence each relation is composition of at least two arrows. 
\end{enumerate}
\end{remark}

\begin{example}\label{example}
To illustrate the cyclic ordering, consider the relations:
\begin{align}
\boldsymbol\alpha_{3}\boldsymbol\alpha_{2}\boldsymbol\alpha_{1}=0\\
\boldsymbol\alpha_{1}\boldsymbol\alpha_{5}\boldsymbol\alpha_{4}\boldsymbol\alpha_{3}=0
\end{align}
where $N=5$. This gives the indecomposable projectives:
\begin{align}
P_1\cong\begin{vmatrix}
   S_1 \\
   S_2 \\
   S_3
\end{vmatrix}\hookrightarrow
P_5\cong\begin{vmatrix}
  S_5\\
   S_1 \\
   S_2 \\
   S_3
\end{vmatrix}\hookrightarrow
P_4\cong\begin{vmatrix}
S_4\\
S_5\\
   S_1 \\
   S_2 \\
   S_3
\end{vmatrix}, \hspace{20pt}
P_3\cong\begin{vmatrix}
   S_3 \\
   S_4 \\
   S_5\\
   S_1
\end{vmatrix}\hookrightarrow
P_2\cong\begin{vmatrix}
   S_2 \\
   S_3 \\
   S_4\\
   S_5\\
   S_1
\end{vmatrix}
\end{align}
Notice that socles are $S_3$ and $S_1$. As a natural number $1$ is smaller than $3$, but here under cyclic ordering $1>3>1$.
\end{example}

%\begin{remark}The followings are true:\begin{enumerate}[label=\arabic*-)]\item If REL is irredundant then there exists at most one relation starting at each $k_i\in[1,N]$.\item If REL is irredundant then there exists at most one relation ending at each $k_j\in[1,N]$.\item There is no restriction about the length of relations except that the system of relations has to be irredundant. \end{enumerate}\end{remark}

\subsection{Projective and Injective modules}\label{projectives}
Using the above system of relations, we describe indecomposable projective and injective modules. Since the system is irredundant, projective-injective modules will occur as projective covers of the simples labeled by the index next to the first index of each relation. Explicitly they are:
\begin{align}
P_{k_1+1}=I_{k_4},\quad P_{k_3+1}=I_{k_6},\quad \dots \quad P_{k_{2r-3}+1}=I_{k_{2r}},\qquad P_{k_{2r-1}+1}=I_{k_{2}}%\text{etc.}
\end{align}
%Using the above system of relations, 
Furthermore, we get classes of projective modules characterized by their socles:
%\begin{align}
%P_{k_1+1}\supset\ldots\supset P_{k_3}  \text{ have simple } S_{k_4} \text{  as socle}\\
%P_{k_3+1}\supset\ldots\supset P_{k_5}  \text{ have simple } S_{k_6} \text{  as socle}\\
%P_{k_5+1}\supset\ldots\supset P_{k_7}  \text{ have simple } S_{k_8} \text{  as socle}\\
%\vdots\\
%P_{k_{2r-3}+1}\supset\ldots\supset P_{k_{2r-1}}  \text{ have simple } S_{k_{2r}} \text{  as socle}\\P_{k_{2r-1}+1}\supset\ldots\supset P_{k_{1}}  \text{ have simple } S_{k_{2}} \text{  as socle}\\
%\end{align}
\begin{align*}P_{k_{1}}\hookrightarrow\ldots\hookrightarrow  P_{(k_{2r-1})+1}=I_{k_{2}} \quad\text{ have simple } S_{k_{2}} \text{  as their socle}\\
P_{k_3}\hookrightarrow\ldots\hookrightarrow P_{k_1+1}=I_{k_4}  \quad\text{ have simple } S_{k_4} \text{  as their socle}\\
% P_{k_5} \hookrightarrow\ldots\hookrightarrow  P_{k_3+1}=I_{k_6}\quad \text{ have simple } S_{k_6} \text{  as their socle}\\
%P_{k_7}\hookrightarrow\ldots\hookrightarrow P_{k_5+1} =I_{k_8}  \quad\text{ have simple } S_{k_8} \text{  as their socle}\\\
%\dots\qquad\qquad\qquad\qquad\qquad\qquad\qquad\\
%P_{k_{2i-1}}\hookrightarrow\ldots\hookrightarrow P_{k_{2i-3}+1}=I_{k_{2i}} \quad\text{ have simple } S_{k_{2i}} \text{  as their socle}\\
\dots\qquad\qquad\qquad\qquad\qquad\qquad\qquad\\
P_{k_{2r-1}}\hookrightarrow\ldots\hookrightarrow P_{(k_{2r-3})+1}=I_{k_{2r}}\quad  \text{ have simple } S_{k_{2r}} \text{  as their socle}
%\\%P_{k_{1}}\hookrightarrow\ldots\hookrightarrow  P_{k_{2r-1}+1}=I_{k_{2}} \quad\text{ have simple } S_{k_{2}} \text{  as their socle}
\end{align*}
Similarly, we get classes of injective modules characterized by their tops:
%\begin{align}
%I_{k_3}\twoheadrightarrow\ldots\twoheadrightarrow I_{k_1+1}\\
%I_{k_5}\twoheadrightarrow\ldots\twoheadrightarrow I_{k_3+1}\\
%\vdots
%\end{align}
\begin{align*}
 P_{(k_{2r-1})+1}=I_{k_{2}}\twoheadrightarrow\ldots\twoheadrightarrow I_{k_4+1}  \quad\text{ have simple } S_{k_{2r-1}+1}\text{  as their top}\\
P_{k_1+1}=I_{k_4}\twoheadrightarrow\ldots\twoheadrightarrow I_{k_2+1}   \quad\text{ have simple } S_{k_1+1} \text{  as their top}\\
%P_{k_3+1}=I_{k_6}\twoheadrightarrow\ldots\twoheadrightarrow I_{k_4+1}  \quad\text{ have simple } S_{k_3+1} \text{  as their top}\\
\dots\qquad\qquad\qquad\qquad\quad\quad\quad\quad\quad\quad\\
P_{(k_{2r-3})+1}=I_{k_{2r}} \twoheadrightarrow\ldots\twoheadrightarrow I_{k_4+1}  \quad\quad\text{ have simple } S_{k_{2r-3}+1} \text{  as their top}
% P_{k_{2r-1}+1}=I_{k_{2}}\twoheadrightarrow\ldots\twoheadrightarrow I_{k_4+1}  \quad\text{ have simple } S_{k_{2r-1}+1}\text{  as their top}
\end{align*}

\begin{notation}\label{classes} For each simple module $S_{k_{2j}}$, the class of projective modules which have  socle $S_{k_{2j}}$, is given as follows. Also we will use the  notation: \\
$cl(j): = (P_{(k_{2j-1})}\hookrightarrow P_{(k_{2j-1})-1}\hookrightarrow\ldots\hookrightarrow P_{(k_{2j-3})+1}=I_{k_{2j}})$. We will denote by:\\
$P_{min}^{(j)}:=P_{(k_{2j-1})}$ the 
module of the shortest length in this class %by $P_{min}^{(j)}:=P_{k_{2j-1}}$ 
and 
by \\
$P_{max}^{(j)}:=P_{(k_{2j-3})+1}=I_{k_{2j}}$
the module of the longest length in this class.% by $P_{max}^{(j)}:=P_{k_{2j-3}+1}=I_{k_{2j}}$. 
\end{notation}

\begin{remark} Let $r$ be the number of relations in the irredundant system REL. Then:
$r=\#\{cl(j) \ |\ \text{classes of projectives}\}$\\
$r=\#\{S_{k_{2j}} \ |\ \text{simples which are socles of projectives}\}$\\
$r=\#\{P_{min}^{(j)} \ |\ \text{minimal projectives}\}$\\
$r=\#\{P_{max}^{(j)} \ |\ \text{maximal projectives}\}$
%$r=\#\{cl(j) \ |\ \text{classes of projectives}\}$
\end{remark}

It is easy to see that the projective classes  correspond to the left sides and injective classes to the right sides of the hills occurring at Auslander-Reiten quiver of $\Lambda$.

\subsection{Cyclic ordering} The cyclic ordering of the vertices of the quiver $Q$ induces the cyclic ordering on: simple modules, indecomposable projectives, socles of projectives, minimal projectives and other modules. Below we describe these orderings precisely. Many of the arguments will use these cyclic orderings. In particular Proposition \ref{cyclicorder} is used in an essential way.

\begin{notation} In order to match the visual order on the Auslander-Reiten quiver, we will usually write in decreasing order $\dots > 1> N> (N-1)>\dots > 2> 1> N>\dots$. 
Also sometimes when it is important, we will use notation $a\  \cdot\!\!\!\!>b$  to emphasize that there is no $c$ such that $a>c>b$. 
Auslander-Reiten quiver will not be used here, but this information might be useful to the people who use AR-quiver.
\end{notation}
%    $\cdot\!\!\!\!>$
%Projective classes will correspond to left sides and injective classes to the right sides of the hills occurring at Auslander-Reiten quiver of $\Lambda$.
%~~~~~~

\begin{definition}  \label{orderingd} Using the above cyclic ordering 
%$\dots > 1> N> (N-1)>\dots > 2> 1> N>\dots$ 
of the vertices, we define cyclic ordering of the simple and projective modules:
$$\dots > S_1> S_N> S_{N-1}>\dots > S_2> S_1> S_N>\dots\quad\quad (*_S)$$
$$\dots > P_1> P_N> P_{N-1}>\dots > P_2> P_1> P_N>\dots\quad\quad (*_P)$$
\end{definition}

\begin{lemma} \label{connecting}
%Cyclic ordering of the simple modules is given by the cyclic ordering  of the vertices of the quiver $Q$, i.e.  $\dots > S_1> S_N> S_{N-1}\dots > S_2> S_1> S_N>\dots$
Let $\tau$ be the Auslander-Reiten translation. Then:
\begin{enumerate}[label=\roman*)]
\item  $\tau^{-1}S_i\cong S_{i-1}$ for $i\in\{2,.. ,N\}$
%$N\geq i\geq2$ 
and $\tau^{-1}S_1\cong S_{N}$. %We will denote this cyclic ordering by $<_c$.
%\begin{notation}For each $S_{k_{2i}}$ the class of projective modules with socle isomorphic to $S_{k_{2i}}$ is$P_{k_{2i-1}}\hookrightarrow\ldots\hookrightarrow P_{k_{2i-3}+1}=I_{k_{2i}}$. We will denote the minimal module in this sequence by $P_{min}^{(i)}:=P_{k_{2i-1}}$ and maximal $P_{max}^{(i)}:=P_{k_{2i-3}+1}=I_{k_{2i}}$. \end{notation}
%\begin{lemma} 
%With the ordering of simples given by the cyclic ordering  of the vertices of the quiver $Q$ consider the induced ordering of indecomposable projectives. Then
%\begin{enumerate}[label=\roman*)]
%\item For each $S_{k_{2i}}$ the class of projective modules with socle isomorphic to $S_{k_{2i}}$ is$P_{k_{2i-1}}\hookrightarrow\ldots\hookrightarrow P_{k_{2i-3}+1}=I_{k_{2i}}$
\item $\tau^{-1}(top P_{max}^{(j)})\cong top P_{min}^{(j-1)}$ for $j\in\{2,..,r\}$
%$r\geq j\geq2$ 
%where $r$ is the number of relations in REL 
and $\tau^{-1}(top P_{max}^{(1)})\cong top P_{min}^{(r)}$.
%\item Projective classes  correspond to left sides and injective classes to the right sides of the hills occurring at Auslander-Reiten quiver of $\Lambda$.
\end{enumerate}
\end{lemma}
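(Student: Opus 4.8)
The statement has two parts, both about how the Auslander–Reiten translation $\tau^{-1}$ permutes certain modules, so the natural approach is to compute the minimal projective presentations and read off $\tau^{-1}$ from the cokernel of the transpose, or — more efficiently for Nakayama algebras — to use the explicit combinatorial description of $\tau^{-1}$ on an almost split sequence $0\to X\to E\to Y\to 0$ in terms of the cyclic ordering of vertices. For part (i), recall that every simple $S_i$ of a cyclic Nakayama algebra is non-projective (stated in the Remark after REL), so it has a minimal projective presentation $P_{i+1}\to P_i\to S_i\to 0$; applying $\mathrm{Tr}$ and then $D$ gives $\tau^{-1}S_i$, and a short computation with the structure of $P_i,P_{i+1}$ shows the cokernel in question is $S_{i-1}$ (indices mod $N$, so $S_1\mapsto S_N$). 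This is the base case and it is essentially bookkeeping with the quiver $Q$ in Figure 1.

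For part (ii) the plan is to identify $\mathrm{top}\,P_{max}^{(j)}$ and $\mathrm{top}\,P_{min}^{(j)}$ explicitly from the classification in Section \ref{projectives}. By Notation \ref{classes}, $P_{min}^{(j)}=P_{k_{2j-1}}$ and $P_{max}^{(j)}=P_{(k_{2j-3})+1}$, so $\mathrm{top}\,P_{min}^{(j)}=S_{k_{2j-1}}$ and $\mathrm{top}\,P_{max}^{(j)}=S_{(k_{2j-3})+1}$. Then part (ii) reduces, via part (i), to the identity $\tau^{-1}S_{(k_{2j-3})+1}=S_{k_{2j-3}}$, and one must check that $S_{k_{2j-3}}=\mathrm{top}\,P_{k_{2j-3}}=\mathrm{top}\,P_{min}^{(j-1)}$ — which holds because $P_{min}^{(j-1)}=P_{k_{2(j-1)-1}}=P_{k_{2j-3}}$. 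The cyclic wrap-around case $j=1$ works the same way since the ordering $1\le k_1<k_3<\dots<k_{2r-1}\le N$ is cyclic (Proposition \ref{cyclicorder}), so $P_{min}^{(r)}=P_{k_{2r-1}}$ plays the role of the ``previous'' minimal projective for $P_{max}^{(1)}$.

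The only genuinely delicate point is justifying part (i) cleanly: one must make sure the minimal projective presentation of $S_i$ really is $P_{i+1}\to P_i\to S_i\to 0$ and that $\mathrm{Tr}$ of this yields a module whose $D$ is simple and equal to $S_{i-1}$. Here I would either invoke the standard description of $\tau$ for Nakayama algebras (if $\mathrm{rad}\,P$ denotes the radical, $\tau^{-1}S_i$ is the top of a suitable quotient of $P_{i-1}$, giving $S_{i-1}$) or, in the spirit of the $\boldsymbol\Delta$-machinery the paper sets up, give a direct argument. Once part (i) is in hand, part (ii) is a formal consequence of the explicit indexing of projective classes, so I expect the main obstacle to be purely the careful verification of the simple case and the correct handling of the cyclic index shifts at the boundary $j=1$.
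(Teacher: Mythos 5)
Your proof of part (ii) is exactly the paper's: read off $\mathrm{top}\,P_{max}^{(j)}\cong S_{k_{2j-3}+1}$ and $\mathrm{top}\,P_{min}^{(j-1)}\cong S_{k_{2j-3}}$ from Notation \ref{classes} and apply part (i), with the same wrap-around for $j=1$; and for part (i) the paper likewise just appeals to the standard construction of the AR-quiver of a Nakayama algebra. The one slip is in your proposed computation of (i): applying $\mathrm{Tr}$ and then $D$ to the minimal projective presentation $P_{i+1}\to P_i\to S_i\to 0$ computes $\tau S_i = D\,\mathrm{Tr}\,S_i\cong S_{i+1}$, not $\tau^{-1}S_i=\mathrm{Tr}\,D\,S_i$; to get $\tau^{-1}$ by this method you must instead start from the minimal injective copresentation $0\to S_i\to I_i\to I_{i-1}$ and take the cokernel of $\nu^{-1}I_i=P_i\to \nu^{-1}I_{i-1}=P_{i-1}$, which is indeed $S_{i-1}$. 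Since your fallback (the standard description of irreducible maps $M\mapsto M/\mathrm{soc}\,M$ for Nakayama algebras) gives the same answer directly, this is a fixable misstatement rather than a gap in the argument.
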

\begin{proof}(a) This follows by the standard construction of AR-quiver.\\
(b) We have $\tau^{-1}(top P_{max}^{(j)})\cong \tau^{-1}S_{(k_{2j-3})+1} \cong S_{(k_{2j-3})}\cong top P_{min}^{(j-1)}$ by Definition \ref{classes} of $P_{max}^{(j)}$ and $P_{min}^{(j-1)}$ and part (a).
\end{proof}

Using this lemma, the cyclic ordering 
%$(*_S)$ of the simples and 
$(*_P)$ of the projectives can be described more precisely in terms of the classes $\{cl(j)\}$ of projectives having the same socle $S_{k_{2j}}$ and shortest and longest projectives $\{P_{min}^{(j)}\}$ and $\{P_{max}^{(j)}\}$.

\begin{proposition} \label{ordering}(a) The inclusions of the projectives within a class $cl(j)$
%$$P_{min}^{(j)}=P_{k_{2j-1}}\hookrightarrow P_{(k_{2j-1})-1} \hookrightarrow\ldots\hookrightarrow P_{(k_{2j-3})+1}=P_{max}^{(j)}$$
define  the  ordering of the projectives which agrees with the ordering in $(*_P)$
$$P_{min}^{(j)}=P_{k_{2j-1}}\ \cdot\!\!\!\!>P_{(k_{2j-1})-1}      \  \cdot\!\!\!\!>\ldots\  \cdot\!\!\!\!>P_{(k_{2j-3})+1}=P_{max}^{(j)}.$$
(b) Furthermore $P_{max}^{(j)}\ \cdot\!\!\!\!>P_{min}^{(j-1)}$ for each $j\in\{2,..,r\}$ and $P_{max}^{(1)}\ \cdot\!\!\!\!>P_{min}^{(r)}$.\\
(c) The cyclic ordering of projectives $(*_P)$ can also be given as:
$$\ \cdot\!\!\!\!>P_{min}^{(j)}\ \cdot\!\!\!\!>\!..\ \cdot\!\!\!\!>\!P_{max}^{(j)}\!\ \cdot\!\!\!\!>\!P_{min}^{(j-1)}\!\ \cdot\!\!\!\!>\!..\!\ \cdot\!\!\!\!>\!P_{max}^{(j-1)}\!\ \cdot\!\!\!\!>\!\dots\!\ \cdot\!\!\!\!>\!P_{min}^{(1)}\!\ \cdot\!\!\!\!>\!..\ \cdot\!\!\!\!>\!P_{max}^{(1)}\!\ \cdot\!\!\!\!>\!P_{min}^{(r)}\!\ \cdot\!\!\!\!>\!..\!\ \cdot\!\!\!\!>\!P_{max}^{(r)}\!\ \cdot\!\!\!\!>$$ 
\end{proposition}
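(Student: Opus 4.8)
The plan is to prove all three parts together, since (c) is just a concatenation of (a) over all $j$ together with (b) joining consecutive classes, so the real content is in (a) and (b).

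First I would prove (a). Fix a class $cl(j)$. By Notation \ref{classes} the projectives in $cl(j)$ are exactly $P_{(k_{2j-1})}, P_{(k_{2j-1})-1}, \dots, P_{(k_{2j-3})+1}$, i.e.\ those whose labels run consecutively (in the cyclic vertex order) from $k_{2j-1}$ down to $k_{2j-3}+1$, and all of these have socle $S_{k_{2j}}$. Since the vertex labels in this range form a consecutive block in the cyclic order $(*)$ on vertices, the induced order $(*_P)$ on the corresponding projectives is precisely $P_{(k_{2j-1})} \ \cdot\!\!\!\!> P_{(k_{2j-1})-1} \ \cdot\!\!\!\!> \dots \ \cdot\!\!\!\!> P_{(k_{2j-3})+1}$, with no other projective in between (there is no vertex label strictly between consecutive ones). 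To match this with the inclusion order, I would observe that within a fixed class, all projectives share the socle $S_{k_{2j}}$ and have tops $S_{k_{2j-1}}, S_{k_{2j-1}-1}, \dots, S_{k_{2j-3}+1}$ respectively; since each such projective is uniserial with fixed socle, $P_a \hookrightarrow P_b$ exactly when the composition series of $P_a$ is an ``initial from the bottom'' segment of that of $P_b$, which happens exactly when $b$ comes before $a$ in the block, i.e.\ $b \ \cdot\!\!\!\!> a$. Hence the inclusion chain and the $(*_P)$-chain coincide, and the endpoints are $P_{min}^{(j)} = P_{k_{2j-1}}$ and $P_{max}^{(j)} = P_{(k_{2j-3})+1} = I_{k_{2j}}$ as recorded in Notation \ref{classes}.

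Next I would prove (b), which is the step I expect to be the main obstacle, since it is where one must show that \emph{nothing} sits between the top of one class and the bottom of the next in the cyclic order. The claim $P_{max}^{(j)} \ \cdot\!\!\!\!> P_{min}^{(j-1)}$ amounts to: the vertex $(k_{2j-3})+1$ (the top of $P_{max}^{(j)}$) is the immediate predecessor, in the cyclic vertex order, of the vertex $k_{2j-3}$ (the top of $P_{min}^{(j-1)}$) — but phrased at the level of projectives rather than vertices, where consecutive vertices need not give consecutive projectives. The tool for this is Lemma \ref{connecting}(b): $\tau^{-1}(\operatorname{top} P_{max}^{(j)}) \cong \operatorname{top} P_{min}^{(j-1)}$, i.e.\ $\tau^{-1} S_{(k_{2j-3})+1} \cong S_{(k_{2j-3})}$, together with Lemma \ref{connecting}(a). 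I would argue that the AR-translation $\tau^{-1}$ on simples shifts the cyclic order $(*_S)$ by exactly one step (this is (a)), so $\operatorname{top} P_{min}^{(j-1)}$ is the immediate $(*_S)$-successor of $\operatorname{top} P_{max}^{(j)}$; since every projective is determined by its top and the projective-order $(*_P)$ refines/agrees with the top-order, and since every vertex is the top of exactly one projective, there can be no projective $P$ with $P_{max}^{(j)} > P > P_{min}^{(j-1)}$ — such a $P$ would have a top strictly between two $(*_S)$-consecutive simples, which is impossible. The cyclic case $P_{max}^{(1)} \ \cdot\!\!\!\!> P_{min}^{(r)}$ is handled identically using the wrap-around clauses $\tau^{-1} S_1 \cong S_N$ and $\tau^{-1}(\operatorname{top} P_{max}^{(1)}) \cong \operatorname{top} P_{min}^{(r)}$ from Lemma \ref{connecting}.

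Finally, for (c): assembling (a) for $j = r, r-1, \dots$ in order and splicing with (b) at each junction, and then closing the cycle with the wrap-around clause of (b), gives the displayed chain, and since each link is a $\ \cdot\!\!\!\!>$ (immediate successor) and the classes $cl(1), \dots, cl(r)$ exhaust all indecomposable projectives (by the Remark following Notation \ref{classes}, there are exactly $r$ classes and together they contain every $P_i$), the resulting cyclic chain is exactly $(*_P)$. The one bookkeeping point to be careful about is the ordering convention: the indices of the classes decrease as $j: r \to 1$ along the original enumeration but Proposition \ref{ordering} lists them starting from a generic $j$ going down to $1$ and then wrapping to $r$; I would simply note this is a cyclic statement so the choice of starting class is immaterial.
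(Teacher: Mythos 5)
Your argument is correct and is essentially the paper's own (much terser) proof spelled out: the paper simply notes that (a) and (b) follow because the ordering of the projectives in $(*_P)$ is induced by the ordering of the simples, with (b) additionally invoking Lemma \ref{connecting} --- exactly the two ingredients you use. The only blemish is a reversed inequality in your inclusion criterion for (a): with the paper's decreasing convention $i\ \cdot\!\!\!\!>i-1$ one has $P_a\hookrightarrow P_b$ within a class exactly when $a>b$ (so $P_{min}^{(j)}$ is the \emph{greatest} element of $cl(j)$ in the cyclic order), which is what your displayed chain and the proposition correctly record, so nothing downstream is affected.
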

\begin{proof} Both parts (a) and (b) follow from the fact that the ordering of the projectives in $(*_P)$ is given by the ordering of the simples. Part (b) also uses 
%the fact that  $\tau^{-1}(top P_{max}^{(j)})\cong top P_{min}^{(j-1)}$ for $j\in\{2,..,r\}$ and $\tau^{-1}(top P_{max}^{(1)})\cong top P_{min}^{(r)}$ from 
Lemma \ref{connecting}.
\end{proof}

\subsection{Cyclic ordering on subsets}
From the set of relations REL we have the following subsets of the integers $\{1,\dots, N\}$:\\
$k_{odd}:=\{k_{2r-1}, k_{2r-3}, \dots, k_{2j-1}, \dots, k_3, k_1\}$$=\{$indices of the beginnings of relations$\}$=\\
$\{$labels of the minimal projectives $\{P_{min}^{(j)}\}$ and their simple tops$\}$.\\
$k_{even}:=\{k_{2r}, k_{2(r-1)}, \dots, k_{2j}, \dots, k_4, k_2\}$$=\{$indices of the ends of relations$\}$=\\
$\{$labels of the socles of all projectives$\}$.

We now consider the induced orderings on these subsets and on the associated subsets of projective and simple modules.

\begin{lemma} \label{c-ordering} The cyclic ordering 
$\small{.. > 1> N> (N-1)>.. > 2> 1> N>..}$
of the vertices of the quiver induces the following cyclic orderings:
%of simple modules induces  the following cyclic orderings which is used throughout the paper.
\begin{enumerate} [label=\roman*)]
\item The set $k_{odd}$ is cyclicaly ordered (by using induced subset ordering) as: \\
$\dots \ \cdot\!\!\!\!>k_{2r-1}\ \cdot\!\!\!\!>k_{2r-3}\ \cdot\!\!\!\!>\dots\ \cdot\!\!\!\!>k_{2j-1}\ \cdot\!\!\!\!>\dots\ \cdot\!\!\!\!>k_3\ \cdot\!\!\!\!>k_1\ \cdot\!\!\!\!>k_{2r-1} \ \cdot\!\!\!\!>\dots$.
\item The cyclic ordering of all projectives induces the following cyclic ordering on the set of minimal projectives $\{P_{min}^{(j)}\}_{j=1}^{r}$:\\
%$\dots \ \cdot\!\!\!\!>P_{k_{2r-1}}\ \cdot\!\!\!\!>P_{k_{2r-3}}\ \cdot\!\!\!\!>\dots\ \cdot\!\!\!\!>P_{k_{2j-1}}\ \cdot\!\!\!\!>\dots\ \cdot\!\!\!\!>P_{k_3}\ \cdot\!\!\!\!>P_{k_1}\ \cdot\!\!\!\!>P_{k_{2r-1}} \ \cdot\!\!\!\!>\dots$, i.e.\\
$\dots \ \cdot\!\!\!\!>P_{min}^{(r)}\ \cdot\!\!\!\!>P_{min}^{(r-1)}\ \cdot\!\!\!\!>\dots\ \cdot\!\!\!\!>P_{min}^{(j)}\ \cdot\!\!\!\!>P_{min}^{(j-1)}\ \cdot\!\!\!\!>\dots\ \cdot\!\!\!\!>P_{min}^{(2)}\ \cdot\!\!\!\!>P_{min}^{(1)}\ \cdot\!\!\!\!>P_{min}^{(r)}\ \cdot\!\!\!\!>\dots$.
\end{enumerate}
\end {lemma}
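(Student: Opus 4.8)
The plan is to derive both statements directly from the definition of the induced (subset) cyclic ordering, together with the standing assumption $1\le k_1<k_3<\dots<k_{2r-1}\le N$ on the beginnings of the relations and with Proposition \ref{ordering}.

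For part (i), I would first record how the cyclic order $\dots>1>N>(N-1)>\dots>2>1>\dots$ of the vertices behaves on a subset $S\subseteq\{1,\dots,N\}$: moving in the decreasing direction one passes through the elements of $S$ in strictly decreasing natural-number order, and after reaching the smallest element of $S$ one wraps around to its largest element. Applying this to $S=k_{odd}=\{k_1,k_3,\dots,k_{2r-1}\}$ and using $k_1<k_3<\dots<k_{2r-1}$, the induced cyclic order is $k_{2r-1}\ \cdot\!\!\!\!>k_{2r-3}\ \cdot\!\!\!\!>\dots\ \cdot\!\!\!\!>k_1\ \cdot\!\!\!\!>k_{2r-1}$. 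The covering assertions need one line of justification: $k_{2j-1}\ \cdot\!\!\!\!>k_{2j-3}$ holds because these are consecutive entries of the sorted list, so no element of $k_{odd}$ lies strictly between them; and $k_1\ \cdot\!\!\!\!>k_{2r-1}$ holds cyclically because $k_{odd}\subseteq[k_1,k_{2r-1}]$, hence there is no element of $k_{odd}$ in $[1,k_1-1]\cup[k_{2r-1}+1,N]$.

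For part (ii), I would invoke Proposition \ref{ordering}(c), which displays $(*_P)$ as a concatenation (cyclically) of the blocks $cl(r),cl(r-1),\dots,cl(1)$, each block $cl(j)$ beginning with $P_{min}^{(j)}$ and ending with $P_{max}^{(j)}$. Restricting the cyclic order $(*_P)$ to the subset $\{P_{min}^{(j)}\}_{j=1}^{r}$ then yields $P_{min}^{(r)}\ \cdot\!\!\!\!>P_{min}^{(r-1)}\ \cdot\!\!\!\!>\dots\ \cdot\!\!\!\!>P_{min}^{(1)}\ \cdot\!\!\!\!>P_{min}^{(r)}$, once we check the covering relations survive the restriction: the projectives lying strictly between $P_{min}^{(j)}=P_{k_{2j-1}}$ and $P_{min}^{(j-1)}=P_{k_{2j-3}}$ in $(*_P)$ are exactly $P_{k_{2j-1}-1},\dots,P_{k_{2j-3}+1}$, i.e. those indexed by vertices strictly between $k_{2j-3}$ and $k_{2j-1}$, and by part (i) none of these indices lies in $k_{odd}$, so none of these projectives is a minimal projective. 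Equivalently, (ii) is just the image of (i) under the order-preserving assignment $k\mapsto P_k$ of Notation \ref{classes} (so that $P_{min}^{(j)}=P_{k_{2j-1}}$), together with the same remark about covers.

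The computation is essentially bookkeeping; the only point requiring genuine care is the behaviour of the covering relation $\cdot\!\!\!\!>$ under passing to a sub-ordering, that is, confirming that no minimal projective is hidden inside a block $cl(j)$ — which is exactly the adjacency statement proved in part (i).
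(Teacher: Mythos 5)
Your proposal is correct and follows essentially the same route as the paper: part (i) from the chosen ordering $k_1<k_3<\dots<k_{2r-1}$ of the starting indices (with the only nontrivial point being the wrap-around cover $k_1\ \cdot\!\!\!\!>k_{2r-1}$), and part (ii) by restricting the block decomposition of $(*_P)$ from Proposition \ref{ordering} and checking no minimal projective lies inside a block. You merely spell out the covering-relation bookkeeping more explicitly than the paper does.
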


\begin{proof} i) Follows from the fact that the integers in $k_{odd}$ were already chosen with such an ordering. The only new inequality is $k_1\ \cdot\!\!\!\!>k_{2r-1}$. This follows since $k_1>k_{2r-1}$ is a consequence of the minimal transitive property between $k_1$ and $k_{2r-1}$ in the original cyclic ordering $\small{.. > 1> N> (N-1)>.. > 2> 1> N>..}$.

ii) In order to see $P_{min}^{(j)}\ \cdot\!\!\!\!>P_{min}^{(j-1)}$ consider the ordering in Proposition \ref{ordering} (a) and (b) and notice that there are no other $P_{min}^{(t)}$ between $P_{min}^{(j)}$ and $P_{min}^{(j-1)}$.
\end{proof}
%\item  The ordering of the indecomposable projectives agrees with the inclusions within each class $cl(j)$ as in Notation \ref{classes}, and consecutive classes are connected with $P_{max}^{(j)}> P_{min}^{(j-1)}$ using Lemma \ref{connecting} as following:
%$$>P_{min}^{(j)}>\!..\!>\!P_{max}^{(j)}\!>\!P_{min}^{(j-1)}\!>\!..\!>\!P_{max}^{(j-1)}\!>\!\dots\!>\!P_{min}^{(1)}\!>\!..>\!P_{max}^{(1)}\!>\!P_{min}^{(r)}\!>\!..\!>\!P_{max}^{(r)}\!>$$ 

\begin{proposition} \label{cyclicorder} Let $\cS$ be a complete set of representatives of isomorphism classes of simple modules indexed by the set $k_{even}$ i.e. $\mathcal S\!:=\!\{S_{k_2}, S_{k_4},..,S_{k_{2j}},..,S_{k_{2r}}\}$. Then:
\begin{enumerate}[label=\roman*)]
\item The following cyclic ordering of the simple modules in $\mathcal S$ is induced by the cyclic ordering of all simple modules:
%Cyclic ordering of the socles of indecomposable projectives is induced from the ordering of all simples
%, as the ordering of a subset, 
$$\dots \ \cdot\!\!\!\!>S_{k_{2r}}\ \cdot\!\!\!\!>S_{k_{2(r-1)}}\ \cdot\!\!\!\!>\dots\ \cdot\!\!\!\!>S_{k_{2j}}\ \cdot\!\!\!\!>S_{k_{2(j-1)}}\ \cdot\!\!\!\!>\dots \ \cdot\!\!\!\!> S_{k_{2}}\ \cdot\!\!\!\!>S_{k_{2r}}\ \cdot\!\!\!\!>\dots$$
\item The induced cyclic ordering of the end terms of the relations in REL 
%induced by the cyclic ordering of the socles 
is:
$$\dots\ \cdot\!\!\!\!>k_{2r}\ \cdot\!\!\!\!>k_{2(r-1)}\ \cdot\!\!\!\!>\dots\ \cdot\!\!\!\!>k_{2j}\ \cdot\!\!\!\!>k_{2(j-1)}\ \cdot\!\!\!\!>\dots \ \cdot\!\!\!\!>k_{2}\ \cdot\!\!\!\!>k_{2r}\ \cdot\!\!\!\!>\dots.$$
\end{enumerate}
\end {proposition}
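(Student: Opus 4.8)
The plan is to deduce Proposition~\ref{cyclicorder} from the material already in place, essentially by transporting the cyclic ordering of the minimal projectives $\{P_{min}^{(j)}\}$ established in Lemma~\ref{c-ordering}(ii) across the bijection $P_{min}^{(j)}=P_{k_{2j-1}}$ and then passing from the odd indices $k_{2j-1}$ to the even indices $k_{2j}$ via the relations. Concretely, for part (ii) I would argue as follows. By Notation~\ref{classes}, the $j$-th relation $\boldsymbol\alpha_{k_{2j}}\ldots\boldsymbol\alpha_{k_{2j-1}}=0$ pairs the beginning index $k_{2j-1}$ with the ending index $k_{2j}$, and $P_{k_{2j-1}}=P_{min}^{(j)}$ has socle $S_{k_{2j}}$. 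So the assignment $k_{2j-1}\mapsto k_{2j}$ is a well-defined bijection $k_{odd}\to k_{even}$ (well-definedness uses irredundancy: at most one relation starts, and at most one ends, at any given vertex). It remains to show this bijection is order-preserving for the induced cyclic orderings, i.e. that $k_{2j-1}\ \cdot\!\!\!\!>k_{2(j-1)-1}$ in $k_{odd}$ forces $k_{2j}\ \cdot\!\!\!\!>k_{2(j-1)}$ in $k_{even}$.

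The key step, and the one I expect to be the main obstacle, is proving that this pairing is monotone — equivalently, that the relations "do not cross" when drawn on the cyclic quiver. The cleanest way I see is to read monotonicity off the chain of inclusions of projectives in Proposition~\ref{ordering}(c): the full cyclic list of projectives is
$$\ \cdot\!\!\!\!>P_{min}^{(j)}\ \cdot\!\!\!\!>\!..\ \cdot\!\!\!\!>\!P_{max}^{(j)}\!\ \cdot\!\!\!\!>\!P_{min}^{(j-1)}\!\ \cdot\!\!\!\!>\!..\!\ \cdot\!\!\!\!>\!P_{max}^{(1)}\!\ \cdot\!\!\!\!>\!P_{min}^{(r)}\!\ \cdot\!\!\!\!>\!..\!\ \cdot\!\!\!\!>\!P_{max}^{(r)}\!\ \cdot\!\!\!\!>,$$
and within a single block one has $P_{min}^{(j)}\ \cdot\!\!\!\!>\ldots\ \cdot\!\!\!\!>P_{max}^{(j)}$ where $P_{max}^{(j)}$ has the same socle $S_{k_{2j}}$ as $P_{min}^{(j)}$. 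Since the socle of $P_{max}^{(j)}=I_{k_{2j}}$ sits in position $k_{2j}$ and every projective in block $j$ contains $S_{k_{2j}}$ as its bottom composition factor, the position $k_{2j}$ in the vertex-ordering $(*_S)$ is pinned between consecutive blocks. Then the order of the blocks, which is $j,j-1,\dots,1,r,r-1,\dots$ by Proposition~\ref{ordering}(b) together with Lemma~\ref{connecting}(ii), directly yields $\dots\ \cdot\!\!\!\!>S_{k_{2r}}\ \cdot\!\!\!\!>S_{k_{2(r-1)}}\ \cdot\!\!\!\!>\dots\ \cdot\!\!\!\!>S_{k_2}\ \cdot\!\!\!\!>S_{k_{2r}}\ \cdot\!\!\!\!>\dots$, which is exactly the cyclic ordering asserted in (i); and (ii) is just the statement (i) with the simple modules $S_{k_{2j}}$ replaced by their labels $k_{2j}$, which carry the induced subset ordering by Definition~\ref{orderingd}. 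The only genuinely new cyclic inequality, as in the proof of Lemma~\ref{c-ordering}, is the wrap-around one $S_{k_2}\ \cdot\!\!\!\!>S_{k_{2r}}$ (respectively $k_2\ \cdot\!\!\!\!>k_{2r}$), and that follows by the same minimal-transitivity argument used there: there is no $S_{k_{2t}}$ in $\mathcal S$ strictly between $S_{k_2}$ and $S_{k_{2r}}$ in the full ordering $(*_S)$ because block $r$ is the last block and block $1$ is followed immediately by block $r$ in the cyclic list of Proposition~\ref{ordering}(c).

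I would write this up as a short proof: first invoke Lemma~\ref{c-ordering}(ii) for the ordering on $\{P_{min}^{(j)}\}$; then note $P_{min}^{(j)}$ has socle $S_{k_{2j}}$ so that the socle map $P_{min}^{(j)}\mapsto S_{k_{2j}}$ carries the cyclic ordering of minimal projectives to a cyclic ordering of $\mathcal S$; then observe that this ordering must be the one induced from $(*_S)$ because no other simple $S_{k_{2t}}$ with $t\neq j,j-1$ can occur as a socle of a projective in the segment of $(*_P)$ strictly between $P_{min}^{(j)}$ and $P_{min}^{(j-1)}$ (again by Proposition~\ref{ordering}(c), each such segment is exactly one block and the socles are constant on a block). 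This gives (i), and (ii) is immediate by passing from $S_{k_{2j}}$ to the index $k_{2j}$. The subtle points to be careful about are exactly the well-definedness of the block structure (needs irredundancy) and the single wrap-around inequality; everything else is bookkeeping already done in Section~2.
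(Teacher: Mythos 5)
Your overall strategy coincides with the paper's: both arguments rest on the cyclic ordering of the minimal projectives from Lemma \ref{c-ordering}(ii) and then transfer that ordering to the socles $S_{k_{2j}}$, and you have correctly located the crux, namely the ``non-crossing'' of the relations. The problem is that your proposed justification of that crux does not close it. Proposition \ref{ordering}(c) orders the projectives $P_i$ by their indices, i.e.\ by their \emph{tops}: it says that the tops of the projectives in block $j$ occupy the cyclic interval from $k_{2j-1}$ down to $k_{2j-3}+1$ and that these intervals of tops appear in the order $j, j-1, \dots$. By itself it says nothing about where the socle positions $k_{2j}$ sit in the vertex ordering $(*_S)$. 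Your assertions that ``the position $k_{2j}$ \dots is pinned between consecutive blocks'' and that the ordering transported by the socle map ``must be the one induced from $(*_S)$'' are exactly the statements to be proved: the fact that every projective lying between $P_{min}^{(j)}$ and $P_{min}^{(j-1)}$ in $(*_P)$ has socle $S_{k_{2j}}$ is a statement about an interval of tops, and does not by itself exclude that some $S_{k_{2t}}$ with $t\neq j, j-1$ lies strictly between $S_{k_{2j}}$ and $S_{k_{2(j-1)}}$ in $(*_S)$. A priori, a partition of the vertices into cyclic blocks, each carrying a socle label, could have the labels interleaved in a different cyclic order than the blocks.

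The missing ingredient is a link between socle positions and top positions, i.e.\ the weak monotonicity of $i\mapsto soc\,P_i$. This is precisely what the paper supplies with its Claim (if $S_a\ \cdot\!\!\!\!>S_b$ inside $\mathcal S$ then $P_{min}^{(a)}\ \cdot\!\!\!\!>P_{min}^{(b)}$), proved by observing that the sections of irreducible maps from $S_a$ up to $P_{max}^{(a)}$ and from $S_b$ up to $P_{max}^{(b)}$ in the Auslander--Reiten quiver do not cross; the proposition then follows by contradiction together with Lemma \ref{c-ordering}(ii). You could repair your version without the AR-quiver by invoking that $rad\,P_i$ is a quotient of $P_{i+1}$ (equivalently, that the length of $P_{i+1}$ is at least the length of $P_i$ minus one), which forces $soc\,P_i$ to be a composition factor of $P_{i+1}$ and hence the socle position to move weakly forward as $i$ moves to $i+1$; combined with your block decomposition and the irredundancy of REL this does yield the non-crossing. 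But as written, the step from ``blocks in order'' to ``socles in order'' is a genuine gap.
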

\begin{proof} i) To see that $S_{k_{2j}}\ \cdot\!\!\!\!>S_{k_{2(j-1)}}$ we need the following claim.\\
Claim: Suppose $S_{a}\ \cdot\!\!\!\!>S_{b}$ for two simple modules in $\mathcal S$. Then $P^{(a)}_{min}\ \cdot\!\!\!\!>P^{(b)}_{min}$.\\
Proof of the claim: 
The entire section of irreducible maps from $S_a$ to the maximal projective $P^{(a)}_{max}$ 
%with $Soc P^{(a)}_{max}\cong S_a$ 
is to the left in the Auslander-Reiten quiver from the section of irreducible maps from $S_b$ to the maximal projective  $P^{(b)}_{max}$ 
%with $Soc P^{(b)}_{max}\cong S_b$ 
and there is no such section which is between these two sections. Therefore there are no projectives between these sections, and hence there are no projectives between $P^{(a)}_{max}$ and $P^{(b)}_{min}$ and consequently no minimal projectives between $P^{(a)}_{min}$ and $P^{(b)}_{min}$. Therefore $P^{(a)}_{min}\ \cdot\!\!\!\!>P^{(b)}_{min}$.

Suppose $S_{k_{2j}}\ \cdot\!\!\!\!>S_{k_{2(j-1)}}$  is not true. Then there is another simple $S_t\in \mathcal S$ where $S_t\not\cong S_{k_{2j}}, S_{k_{2(j-1)}}$ so that 
$S_{k_{2j}}\ \cdot\!\!\!\!>S_t >S_{k_{2(j-1)}}$. 
%$S_{k_{2j}}\ \cdot\!\!\!\!>S_t\ \cdot\!\!\!\!>S_{k_{2(j-1)}}$. 
This implies $P_{min}^{(j)} \ \cdot\!\!\!\!>P_{min}^{(t)}$ by the claim which contradicts $P_{min}^{(j)} \ \cdot\!\!\!\!>P_{min}^{(j-1)}$ as was shown in Lemma \ref{c-ordering}.

\noindent ii) This follows from i) since the ordering of simple modules is given by their indices.
\end{proof}

\subsection{Notion of $\bf\Delta$-modules% and $\Delta$filtrations
} Of particular importance for the study of $\varphi$-dimension are the $\bf\Delta$-modules which are obtained as extensions of a particular collection of modules ${\bf\Delta}=\{\Delta_1,..,\Delta_r\}$, which we now describe.

Recall that the simple modules $\{S_{k_2}, S_{k_4},\dots, S_{k_{2r}}\}$ are the socles of indecomposable projective modules as in Notation \ref{classes}. Since these modules, and also certain closely related modules, will play a very important role, we recall and introduce the following notation.%}

\begin{notation}
Let  $\Lambda$ be a cyclic Nakayama algebra given by the set of $r$ relations. We define the following set of representatives of isomorphism classes of  simple modules:
$$ \mathcal S'\!:=\!\{S_{k_2+1}, S_{k_4+1},..,S_{k_{2j}+1},..,S_{k_{2r}+1}\}.$$
Recall that, previously we used $\mathcal S=\!\{S_{k_2}, S_{k_4},..,S_{k_{2j}},..,S_{k_{2r}}\}$.
\end{notation}

\noindent In the Example \ref{example} we have: $\cS=\{S_1,S_3\}$, $\cS'=\{S_2,S_4\}$.
\begin{definition} \label{definitionofdelta}let $\Lambda$ be  a cyclic Nakayama algebra defined by the system of $r$ relations REL.
For each $j\in\{1,\dots,r\}$ let $\Delta_j$ be a shortest indecomposable uniserial module with 
$soc\Delta_j\cong S_{k_{2j}}$ and $top\Delta_j\cong S_{k_{2(j-1)}+1}$. Let ${\bf\Delta}=\{\Delta_1,\Delta_2,..,\Delta_j,..,\Delta_r\}$, 
i.e. a set of shortest modules of the following form:

%We consider the finite set of indecomposable uniserial modules, with shortest possible lengths, with the  tops in $\mathcal S'$ and socles in $\mathcal S$ as indicated below: \\

${\bf\Delta}:=\left\{ \Delta_1\cong\begin{vmatrix}
    S_{k_{2r}+1} \\
    \vdots  \\
    S_{k_{2}}
\end{vmatrix}\!, \Delta_2\cong\begin{vmatrix}
    S_{k_{2}+1}  \\
    \vdots  \\
   S_{k_{4}}
\end{vmatrix}\!,..,\Delta_j\cong\begin{vmatrix}
   S_{k_{2(j-1)}+1}  \\
    \vdots  \\
    S_{k_{2j}}
\end{vmatrix}\!,..,\Delta_r\cong \begin{vmatrix}
   S_{k_{2r-2}+1}  \\
    \vdots  \\
    S_{k_{2r}}
\end{vmatrix}\!\right\}.$\\%, which have the shortest possible length.
We will use term $\bf\Delta$-module for any module which is isomorphic to a module which has a filtration by the modules in the set $\bf\Delta$.
\end{definition}

\begin{lemma}\label{simplesindelta} The simple composition factors of distinct $\Delta_j$s are disjoint.
\end{lemma}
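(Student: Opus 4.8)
\textbf{Proof proposal for Lemma \ref{simplesindelta}.}

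The plan is to track, for each $j$, exactly which simple modules occur as composition factors of $\Delta_j$, and then observe that these composition-factor sets partition (a subset of) the vertex set $\{1,\dots,N\}$, so in particular they are pairwise disjoint. First I would recall from Definition \ref{definitionofdelta} that $\Delta_j$ is the \emph{shortest} uniserial module with $\mathrm{top}\,\Delta_j\cong S_{k_{2(j-1)}+1}$ and $\mathrm{soc}\,\Delta_j\cong S_{k_{2j}}$. Because $\Lambda$ is a cyclic Nakayama algebra, a uniserial module is determined by its top together with its length, and its composition factors read $S_{k_{2(j-1)}+1}, S_{k_{2(j-1)}+2},\dots, S_{k_{2j}}$ in consecutive order along the cyclic quiver. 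So the composition factors of $\Delta_j$ are precisely the simples $S_t$ with $t$ in the cyclic interval $[\,k_{2(j-1)}+1,\ k_{2j}\,]$, going around $Q$ in the positive direction. The key point to justify here is that this interval is ``short'', i.e. that it wraps around the cycle at most once and does not overshoot; this is exactly what the minimality in the definition of $\Delta_j$ buys us, combined with the fact (from the relation $\boldsymbol\alpha_{k_{2j-2}}\ldots\boldsymbol\alpha_{k_{2j-3}}=0$ and irredundancy) that there genuinely is a uniserial module with that top and that socle of length $\le$ the length of the corresponding relation.

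Next I would use the cyclic ordering results already established, namely Proposition \ref{cyclicorder}(ii), which gives the cyclic order
$k_{2r}\ \cdot\!\!\!\!>k_{2(r-1)}\ \cdot\!\!\!\!>\dots\ \cdot\!\!\!\!>k_2\ \cdot\!\!\!\!>k_{2r}$
on the end-indices $k_{even}$. Writing this in the increasing direction of the quiver (the reverse of the displayed decreasing $\,\cdot\!\!>\,$ order), the values $k_2, k_4,\dots,k_{2r}$ appear in cyclic succession around $\{1,\dots,N\}$. Consequently the half-open cyclic intervals
$(\,k_{2(j-1)},\ k_{2j}\,] = \{k_{2(j-1)}+1,\ \dots,\ k_{2j}\}$
for $j=1,\dots,r$ (indices mod $r$, with the convention $k_0 := k_{2r}$) are pairwise disjoint — indeed they tile the cyclic arc from $k_{2r}$ all the way around back to $k_{2r}$, so their union is all of $\{1,\dots,N\}$ and consecutive intervals meet only at endpoints which, being half-open, are not shared. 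Since the composition factors of $\Delta_j$ are exactly $\{S_t : t\in (\,k_{2(j-1)},\ k_{2j}\,]\}$, disjointness of the index intervals immediately gives disjointness of the composition-factor sets of distinct $\Delta_j$'s.

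The main obstacle I anticipate is the first step: rigorously pinning down that the composition factors of the \emph{shortest} module $\Delta_j$ with the prescribed top and socle are precisely $S_t$ for $t$ ranging over the single cyclic pass from $k_{2(j-1)}+1$ to $k_{2j}$, rather than a longer module that wraps around the cycle and repeats vertices. One must rule out the degenerate situation where the only uniserial module with top $S_{k_{2(j-1)}+1}$ and socle $S_{k_{2j}}$ of minimal length already has length $>N$, forcing a repeated composition factor; this is precisely where one invokes that $\Delta_j$ sits inside the projective $P^{(j)}_{max}=I_{k_{2j}}$ (equivalently, that there is a relation governing the passage from $k_{2j-3}+1$ through to $k_{2j}$) so that the minimal length is at most the length of that relation, which is $\le N$. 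Once that is in place, everything else is a direct bookkeeping argument with the cyclic order from Proposition \ref{cyclicorder}, so I would keep that part brief.
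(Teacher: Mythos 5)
Your proposal is correct and follows essentially the same route as the paper: the paper's (very terse) proof likewise combines the minimality of each $\Delta_j$ with the cyclic ordering of the index sets $k_{even}$ and $k_{even}+1$ (Proposition \ref{cyclicorder} / Lemma \ref{c-ordering}) to see that the composition factors of the $\Delta_j$'s are the consecutive cyclic intervals $(k_{2(j-1)},k_{2j}]$, which tile $\{1,\dots,N\}$ and are therefore pairwise disjoint. Your write-up simply makes explicit the interval bookkeeping and the existence/one-pass issue that the paper leaves implicit.
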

\begin{proof}
By definition, each $\Delta_j$  is the shortest module with given top and socle. By the cyclic ordering \ref{c-ordering} of the sets $\cS$ and $\cS'$ lemma follows.
\end{proof}
\begin{remark}\label{partitionofN} All simple modules appear exactly once in the union of composition factors of all $\Delta_i$s. 
\end{remark}
The modules in the set $\bf\Delta$  of Example \ref{example} are:
\begin{align}
\Delta_1=\begin{vmatrix}
   S_2 \\
   S_3
\end{vmatrix}\hspace{20pt}
\Delta_2=\begin{vmatrix}
   S_4 \\
   S_5 \\
   S_1
\end{vmatrix}
\end{align}

The modules in the set $\bf\Delta$ are very important since they arise as building blocks for most of the syzygies, as it will be shown in Corollary \ref{topsocdelta}.
%These modules are very important, since they arise naturally as building blocks of other modules which appears as syzygies: for any module $M$, $soc \Omega^i(M)\in \cS$, for all $i\geq 1$. \label{soc}.Similarly, for any module $M$, $top \Omega^i(M)\in\cS'$, for all $i\geq 2$. We prove these in proposition \ref{deltafilt}. 
Hence, it is convenient to introduce terms: $\bf\Delta$-top, $\bf\Delta$-soc, $\bf\Delta$-submodule, $\bf\Delta$-projectives... etc.

\begin{remark} The modules ${\bf\Delta}=\{\Delta_1,\Delta_2, \dots, \Delta_r\}$  satisfy the following properties.
\begin{enumerate}[label=(\roman*)]
%\item The simple composition factors of distinct $\Delta_i$s are disjoint.
\item  $\{soc \Delta_i\}_{i=1}^r = \{S_{k_2}, S_{k_4},\dots, S_{k_{2r}}\}=\{soc I_{k_2}, soc I_{k_4},\dots, soc I_{k_{2r}},\}$ where \\
$\{ I_{k_2},  I_{k_4},\dots,  I_{k_{2r}}\}$  is a complete set of pairwise nonisomorphic indecomposable projective-injective modules.
\item The simple modules $top\Delta_{i+1}$ and $soc \Delta_i$  for $i=1,..,r\!-\!1$ and  $top\Delta_{1}$ and $soc \Delta_r$ are successive in the cyclic ordering of simples $(*_S)$ as in Definition \ref{orderingd}, i.e.
$$top \Delta_{i+1}\ \cdot\!\!\!\!>soc \Delta_i \text{ for } i=1,..,r\!-\!1 \text{ and } top\Delta_{1}\ \cdot\!\!\!\!>soc \Delta_r.$$
% induced by the ordering of vertices \ref{c-ordering} on the quiver $Q$.
\end{enumerate}
\end{remark}

\begin{corollary} \label{Deltas}Consider the modules in ${\bf\Delta}=\{\Delta_1,..,\Delta_r\}$. Then:
\begin{enumerate}[label=(\arabic*)]
\item The cyclic ordering in Proposition \ref{cyclicorder} induces cyclic ordering on $\Delta_i$'s as 
$$\dots\Delta_{1}\ \cdot\!\!\!\!> \Delta_r \ \cdot\!\!\!\!> \Delta_{r-1}\ \cdot\!\!\!\!> \dots \ \cdot\!\!\!\!>\Delta_2\ \cdot\!\!\!\!>  \Delta_{1}\ \cdot\!\!\!\!> \Delta_r \dots$$
\item If there is a nonsplit exact sequence $0\rightarrow\Delta_j\rightarrow X\rightarrow \Delta_{j'}\rightarrow 0$, then $\Delta_j$ and $\Delta_{j'}$ must be successive in the cyclic ordering: $\Delta_j \ \cdot\!\!\!\!> \Delta_{j'}$, i.e. $j=j'+1$.
\end{enumerate}
\end{corollary}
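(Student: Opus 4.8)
The plan is to prove both parts by reducing everything to the behaviour of tops and socles, together with the two cyclic orderings already established: the cyclic ordering of simples $(*_S)$ and the induced cyclic ordering of the set $\cS$ (Proposition \ref{cyclicorder}). For part (1), I would first observe that each $\Delta_i$ is uniserial with $\soc\Delta_i\cong S_{k_{2i}}$, so the position of $\Delta_i$ in any ordering induced from the ordering of projectives (or of simples) is governed by the position of its socle $S_{k_{2i}}$. By Proposition \ref{cyclicorder}(i) the simples in $\cS$ are cyclically ordered as
$$\dots\ \cdot\!\!\!\!>S_{k_{2r}}\ \cdot\!\!\!\!>S_{k_{2(r-1)}}\ \cdot\!\!\!\!>\dots\ \cdot\!\!\!\!>S_{k_2}\ \cdot\!\!\!\!>S_{k_{2r}}\ \cdot\!\!\!\!>\dots,$$
and since the map $\Delta_i\mapsto\soc\Delta_i=S_{k_{2i}}$ is a bijection from $\bf\Delta$ onto $\cS$, this ordering transports verbatim to the $\Delta_i$, giving
$$\dots\Delta_1\ \cdot\!\!\!\!>\Delta_r\ \cdot\!\!\!\!>\Delta_{r-1}\ \cdot\!\!\!\!>\dots\ \cdot\!\!\!\!>\Delta_2\ \cdot\!\!\!\!>\Delta_1\ \cdot\!\!\!\!>\Delta_r\dots.$$
The only thing to check carefully is that "successive in $\cS$" really does mean "successive among the $\Delta$'s" — i.e. that nothing else is inserted — but this is immediate since $\bf\Delta$ and $\cS$ have the same cardinality $r$ and the correspondence is order-preserving.

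For part (2), suppose $0\to\Delta_j\to X\to\Delta_{j'}\to 0$ is nonsplit. Since all modules here are uniserial over a Nakayama algebra, $X$ is again uniserial, $\soc X\cong\soc\Delta_j\cong S_{k_{2j}}$, and $\topp X\cong\topp\Delta_{j'}\cong S_{k_{2(j'-1)}+1}$. The nonsplitness forces $\topp\Delta_{j'}$ to sit immediately above $\soc\Delta_j$ in the uniserial chain of $X$, so in the cyclic ordering of simples $(*_S)$ we must have $\topp\Delta_{j'}\ \cdot\!\!\!\!>\soc\Delta_j$, i.e. $S_{k_{2(j'-1)}+1}\ \cdot\!\!\!\!>S_{k_{2j}}$. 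Now I invoke the Remark preceding this corollary: it states precisely that $\topp\Delta_{i+1}\ \cdot\!\!\!\!>\soc\Delta_i$ for $i=1,\dots,r-1$ and $\topp\Delta_1\ \cdot\!\!\!\!>\soc\Delta_r$, and by Remark \ref{partitionofN} each simple occurs exactly once among the composition factors of the $\Delta_i$, so $S_{k_{2j}}$ determines $j$ uniquely and the relation $S_{k_{2(j'-1)}+1}\ \cdot\!\!\!\!>S_{k_{2j}}$ can hold for exactly one choice, forcing $j=j'+1$ (indices mod $r$). Equivalently, $\Delta_j\ \cdot\!\!\!\!>\Delta_{j'}$ in the ordering of part (1).

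I expect the main subtlety — not really an obstacle, but the step requiring the most care — to be justifying that a nonsplit extension of two uniserial modules over this Nakayama algebra must itself be uniserial and must glue them "end to end" in exactly the way claimed, so that the adjacency condition on tops and socles is genuinely forced rather than merely possible. This uses that $\Lambda$ is Nakayama (so submodules and quotients of uniserials are uniserial and indecomposables are uniserial), that $\Delta_j$ is the \emph{shortest} module with its prescribed top and socle, and that by Lemma \ref{simplesindelta} and Remark \ref{partitionofN} the composition factors of distinct $\Delta$'s are disjoint and together exhaust all simples; this disjointness is what prevents any "wrap-around" extension other than the successive one. Once uniseriality of $X$ is in hand, the rest is a direct comparison with the Remark on successiveness of $\topp\Delta_{i+1}$ and $\soc\Delta_i$, and the conclusion $j=j'+1$ drops out.
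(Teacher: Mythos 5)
Part (1) of your argument is fine and is essentially what the paper intends: the corollary is stated there without proof, as an immediate consequence of Proposition \ref{cyclicorder} via the bijection $\Delta_i\mapsto soc\,\Delta_i=S_{k_{2i}}$ between ${\bf\Delta}$ and $\cS$, exactly as you describe. Your overall strategy for part (2) --- reduce to an adjacency of simple composition factors at the junction of the two pieces inside the uniserial module $X$, then match against the Remark $top\,\Delta_{i+1}\ \cdot\!\!\!\!>soc\,\Delta_i$ and use Remark \ref{partitionofN} to pin down the index --- is also the right one (it is the content the paper later isolates as Corollary \ref{topsoc}), but the key step is written backwards. In $0\to\Delta_j\to X\to\Delta_{j'}\to 0$ the submodule $\Delta_j$ sits at the bottom of the uniserial module $X$ and the quotient $\Delta_{j'}$ at the top, so the two composition factors that become adjacent at the junction are $soc\,\Delta_{j'}$ (immediately above) and $top\,\Delta_j$ (immediately below); the relation forced by nonsplitness is $top\,\Delta_j\ \cdot\!\!\!\!>soc\,\Delta_{j'}$, i.e.\ $S_{k_{2(j-1)}+1}\ \cdot\!\!\!\!>S_{k_{2j'}}$. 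You instead assert $top\,\Delta_{j'}\ \cdot\!\!\!\!>soc\,\Delta_j$; but $top\,\Delta_{j'}\cong top\,X$ and $soc\,\Delta_j\cong soc\,X$ lie at opposite ends of $X$ and are not forced to be adjacent by the extension. Worse, feeding your relation $S_{k_{2(j'-1)}+1}\ \cdot\!\!\!\!>S_{k_{2j}}$ into the Remark together with the uniqueness coming from Remark \ref{partitionofN} would yield $j=j'-1$, the opposite of the conclusion $j=j'+1$ that you then state, so the argument as written is internally inconsistent. The fix is local: use the correct adjacency $top\,\Delta_j\ \cdot\!\!\!\!>soc\,\Delta_{j'}$, and matching it against $top\,\Delta_{i+1}\ \cdot\!\!\!\!>soc\,\Delta_i$ gives $j-1=j'$ (mod $r$), as required.
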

%In the Example \ref{example} we have: $\cS=\{S_1,S_3\}$, $\cS'=\{S_2,S_4\}$.

The following is a useful lemma about the modules over cyclic Nakayama algebras.

\begin{lemma}\label{submodules} Let $\Lambda$ be a  Nakayama algebra. Let $M, N$ be indecomposable $\Lambda$-modules.
\begin{enumerate}[label=(\arabic*)]
\item  If $soc M\cong soc N$ then either $M$ is isomorphic to a submodule of $N$ or $N$ is isomorphic to a submodule of $M$.
\item If $top M\cong top N$ then either $M$ is isomorphic to a quotient of $N$ or $N$ is isomorphic to a quotient of $M$.
\end{enumerate}
\end{lemma}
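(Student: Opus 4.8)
\textbf{Proof proposal for Lemma \ref{submodules}.}

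The plan is to use the defining property of Nakayama algebras: every indecomposable module is uniserial, hence determined up to isomorphism by its top (or socle) together with its Loewy length. I will prove (1) in detail; part (2) then follows by a dual argument (or by passing to the opposite algebra, which is again Nakayama).

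For part (1), let $M$ and $N$ be indecomposable with $\soc M \cong \soc N \cong S_i$ for some vertex $i$. Since $M$ is uniserial with simple socle $S_i$, its composition series (read from the bottom) is forced: the radical layers of $M$, listed from the socle upward, must be $S_i, S_{i+1}, S_{i+2}, \dots$ where the indices are taken in the cyclic order of the quiver $Q$ of Figure 1 (because the only arrow into vertex $i$ comes from vertex $i-1$, so the only simple that can sit immediately above $S_i$ in a uniserial module is $S_{i-1}$ — more precisely one works with the arrows pointing \emph{toward} the socle). Thus $M$ is the unique uniserial module whose socle series is an initial segment (from the socle) of a fixed sequence depending only on $i$ and on the relations REL, of some length $\ell(M)$; similarly $N$ is the initial segment of the \emph{same} sequence of length $\ell(N)$. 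If $\ell(M) \le \ell(N)$, then the socle series of $M$ is a sub-series of that of $N$, and the corresponding submodule of $N$ — namely $\soc_{\ell(M)} N$, the $\ell(M)$-th socle of $N$ — is uniserial with the same socle series as $M$, hence isomorphic to $M$ by uniqueness. Symmetrically, if $\ell(N) \le \ell(M)$ then $N$ embeds in $M$. This proves (1).

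For part (2): apply part (1) to the opposite algebra $\Lambda^{\mathrm{op}}$, which is again a Nakayama algebra, under the duality $D = \Hom_k(-,k)$ (or $\Hom_\Lambda(-,\Lambda)$ in the appropriate form); duality interchanges tops and socles and interchanges sub- and quotient modules, and it sends indecomposables to indecomposables. Alternatively, argue directly: an indecomposable uniserial module is determined by its top and its Loewy length, its radical layers from the top being $S_j, S_{j+1}, S_{j+2}, \dots$ in the cyclic order; if $\mathrm{top}\,M \cong \mathrm{top}\,N$ then one radical layer sequence is an initial segment of the other, and the shorter module is a quotient of the longer one (quotient by a suitable power of the radical), again by uniqueness of uniserial modules with prescribed top and length.

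The only real subtlety — and the step to be careful about — is the bookkeeping with the relations: one must check that "initial segment of the forced sequence" genuinely does cut out a \emph{sub}module (resp. quotient) and that the relations of $\Lambda$ do not obstruct this. This is immediate from the structure of modules over a Nakayama algebra: $\mathrm{rad}^t M$ and $M/\mathrm{rad}^t M$ are again uniserial $\Lambda$-modules for every $t$, with the evident socle/top and length, so no separate verification against REL is needed beyond observing that both $M$ and $N$ are honest $\Lambda$-modules to begin with. Everything else is the standard uniseriality dictionary for Nakayama algebras.
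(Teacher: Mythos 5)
Your proof is correct. The paper in fact states Lemma \ref{submodules} without any proof, treating it as standard uniseriality folklore for Nakayama algebras, and your argument is precisely that standard one: an indecomposable (hence uniserial) module is determined up to isomorphism by its socle together with its length, so two such modules with a common socle both embed into the injective envelope of that socle as terms of its totally ordered socle series, whence the shorter is a submodule of the longer; part (2) is dual. The only blemish is an indexing slip: with the arrow convention $\boldsymbol\alpha_i\colon i\to i+1$ of Figure 1, the simple sitting immediately above $S_i$ in a uniserial module is $S_{i-1}$ (as your own parenthetical correctly says, and as Example \ref{example} confirms, where $P_1$ has layers $S_1,S_2,S_3$ read from top to socle), so the sequence read from the socle upward is $S_i,S_{i-1},S_{i-2},\dots$ rather than $S_i,S_{i+1},\dots$; this does not affect the argument, whose only real content is that the sequence of composition factors is forced by the socle.
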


\begin{proposition}{\label{topsocilk}}
 Let $M$ be an indecomposable $\Lambda$-module. Suppose  $top M\in \mathcal S'$ and $soc M\in \mathcal S$ (up to isomorphisms). Then:\\
 (a) If $M$ is of minimal length among such modules, then $M\cong \Delta_i$ for some $i$.\\%\in\{1,\dots,r\}$.\\
(b) The module $M$ has a filtration by the modules in $\bf\Delta$, i.e. $M$ is a $\bf\Delta$-module.
\end{proposition}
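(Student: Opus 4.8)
The plan is to prove both parts simultaneously by induction on the length of $M$, using Lemma \ref{submodules} as the main engine. Fix an indecomposable $M$ with $\operatorname{top}M\in\mathcal S'$ and $\operatorname{soc}M\in\mathcal S$; say $\operatorname{soc}M\cong S_{k_{2j}}$, so that $\Delta_j$ is the shortest indecomposable uniserial module with socle $S_{k_{2j}}$ (its top is $S_{k_{2(j-1)}+1}$). Since $M$ is uniserial (we are over a Nakayama algebra) with the same socle as $\Delta_j$, part (1) of Lemma \ref{submodules} applies: either $M\hookrightarrow\Delta_j$ or $\Delta_j\hookrightarrow M$. In the first case, minimality of $\Delta_j$ among modules with socle in $\mathcal S$ and top in $\mathcal S'$ — together with the fact that the composition factors of $\Delta_j$ between its socle and top are, by Lemma \ref{simplesindelta} and Remark \ref{partitionofN}, none of them lying in $\mathcal S$ except the socle itself — forces $M\cong\Delta_j$. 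This simultaneously settles (a): a module $M$ of minimal length among those with the stated top/socle property cannot properly contain any $\Delta_j$, so it must be contained in $\Delta_j$, hence equal to it.

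For (b), assume $\Delta_j\hookrightarrow M$ and consider the quotient $N:=M/\Delta_j$. The key point is to identify $\operatorname{top}N$ and $\operatorname{soc}N$ and check that $N$ again satisfies the hypotheses of the proposition (or is zero). Since $\Delta_j$ is a submodule of $M$ sitting at the bottom, $\operatorname{top}N\cong\operatorname{top}M\in\mathcal S'$. For the socle: $\operatorname{soc}N$ is the simple $S_t$ such that $S_t$ sits immediately above the copy of $\operatorname{top}\Delta_j=S_{k_{2(j-1)}+1}$ in $M$; but the simple immediately preceding $k_{2(j-1)}+1$ in the cyclic order is $k_{2(j-1)}$, and by the structure of the $\Delta$-modules and Remark \ref{partitionofN} the first index in $\mathcal S$ one reaches going up from $k_{2(j-1)}+1$ is exactly $k_{2(j-1)}$ — i.e. the socle of the next quotient layer is again in $\mathcal S$. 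Thus $N$ is shorter than $M$ and satisfies the same hypotheses, so by induction $N$ is a $\bf\Delta$-module, and the extension $0\to\Delta_j\to M\to N\to 0$ exhibits $M$ as a $\bf\Delta$-module. The base case is $M\cong\Delta_j$ itself, which is clear.

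The step I expect to be the main obstacle is the socle computation for the quotient $N=M/\Delta_j$ — more precisely, verifying that once one climbs past $\operatorname{top}\Delta_j$ inside $M$, the next simple that lies in $\mathcal S$ is reached exactly at the top of the \emph{next} $\Delta$-block in the cyclic ordering, with no "misalignment." This is where Remark \ref{partitionofN} (every simple appears exactly once among the composition factors of the $\Delta_i$'s) and the successivity statement $\operatorname{top}\Delta_{i+1}\ \cdot\!\!\!\!>\operatorname{soc}\Delta_i$ do the real work: they guarantee that the cyclic sequence of simples is partitioned into consecutive runs, each run being the set of composition factors of one $\Delta_i$, listed from top to socle. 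So climbing out of $\Delta_j$ lands one at the bottom ($=$ socle) of $\Delta_{j+1}$, and the hypotheses propagate. A secondary point to be careful about is that $M$ might not be long enough to "reach" a full copy of the next block before its own top is exhausted; but in that case $N$ is a proper quotient of $\Delta_{j+1}$ with top in $\mathcal S'$, and the minimality argument from part (a) combined with Lemma \ref{submodules}(2) shows $\operatorname{soc}N\in\mathcal S$ still, or $N=0$, so the induction still closes.
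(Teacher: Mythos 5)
Your argument is correct and follows essentially the same route as the paper: part (a) via Lemma \ref{submodules} together with the fact that each $\Delta_i$ contains exactly one composition factor from $\mathcal S$ and exactly one from $\mathcal S'$ (you match socles where the paper matches tops, a symmetric variant), and part (b) by assembling the ${\bf\Delta}$-filtration from the socle upward (the paper grows a maximal ${\bf\Delta}$-filtered submodule, you peel off the bottom $\Delta_j$ and induct on the quotient --- the same idea, and your final ``misalignment'' worry is vacuous since $top(M/\Delta_j)=top\,M\in\mathcal S'$ already rules out landing inside a partial block). Two cosmetic slips: in (a), for a \emph{submodule} of $\Delta_j$ the relevant fact is that only its top lies in $\mathcal S'$ (the statement about $\mathcal S$ is what one needs for quotients), and with the paper's indexing conventions the block sitting above $\Delta_j$ is $\Delta_{j-1}$, not $\Delta_{j+1}$ (cf.\ Corollary \ref{Deltas}).
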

\begin{proof}
(a) Suppose $top M\cong S_{k_{2j}+1}$. Since $top \Delta_{j+1}\cong S_{k_{2j}+1}$ it follow by Lemma \ref{submodules}
that either $M$ is isomorphic to a quotient of  $\Delta_{j+1}$ or $\Delta_{j+1}$ is isomorphic to a quotient of $M$. By lemma \ref{simplesindelta}  it follows that $\Delta_{j+1}$ has only one simple from $\mathcal S$ as a composition factor. It follows that $M\cong \Delta_{j+1}$ in the first case. By minimality of $M$ the second case is also possible only if  $M\cong \Delta_{j+1}$.\\
%The first part follows from lemma \ref{simplesindelta}. There is no common simple composition factor of any $\Delta$length one $\Delta$ module. Moreover, by definition, they have the shortest possible lengths. This forces that $M$ has to be an element of the set $\Delta$.\\
% NEED PROOF!!!!!!!!!!!!!!!!!!!!!
(b) Socle of $M$ is in $\cS$, hence there exists $\Delta_i$ with the same socle. If $M\hookrightarrow\Delta_i$, by the part (a), we get $M\cong\Delta_i$. So we assume that $\Delta_i$ is a proper submodule of $M$. \\
 Let  $\Delta^j_i$ denote the module with $\bf\Delta$-socle $\Delta_i$, and $\bf\Delta$-length $j$. We can construct the maximal submodule $\Delta^{k-1}_i$ of $M$ in the sense that $\Delta^{k}_i$ is not a proper submodule of $M$. We have two cases depending on indecomposability of $\Delta^{k}_i$. Since $\Delta^{k-1}_i$ is submodule of $M$, it is immediately indecomposable by Lemma \ref{submodules}. 
Assume that $\Delta^k_i$ is indecomposable. By the induced  cyclic ordering to $\bf\Delta$-modules in \ref{Deltas}, either $top M\cong top\Delta^{k-1}_i$ or $top M\cong\Delta^k_i$. Hence it has $\bf\Delta$-filtration.
In the other case, $\Delta^k_i$ splits into direct sum:
\begin{align}
\Delta^{k-1}_i\hookrightarrow M\hookrightarrow \Delta^{k-1}_i\oplus \Delta_{i'}
\end{align}
for some index $i'$. The only possibility is $M\cong \Delta^{k-1}_j$.
\end{proof}

\begin{lemma}\label{folklore1}
(a) A projective module $P$ cannot be a proper subquotient of another indecomposable module $X$.
%Any projective subquotient of a module $X$ is a submodule of $X$. \\
(b) Moreover, if $\Lambda$ is Nakayama, a projective module cannot be a submodule of a nonprojective indecomposable modules.
%A projective module $P$ of  cyclic Nakayama algebra cannot be a proper subquotient of another indecomposable projective module. Moreover, a projective module $P$ cannot be a submodule of nonprojective indecomposable modules.
\end{lemma}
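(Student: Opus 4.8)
\textbf{Proof proposal for Lemma \ref{folklore1}.}

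The plan is to prove (a) first and then derive (b) from (a) together with the Nakayama structure. For part (a), suppose toward a contradiction that $P$ is a proper subquotient of an indecomposable $X$, so there are submodules $U\subsetneq V\subseteq X$ with $V/U\cong P$. First I would reduce to the case $V=X$: if $V\subsetneq X$ we may replace $X$ by $V$, but then we need $V$ indecomposable — since $\Lambda$ is Nakayama every submodule of an indecomposable module is indecomposable (this is the uniserial structure, and it is used repeatedly in Lemma \ref{submodules}), so this reduction is harmless. Thus assume $X/U\cong P$. Since $P$ is projective, the surjection $X\twoheadrightarrow X/U\cong P$ splits, giving $X\cong U\oplus P$; as $X$ is indecomposable and $P\neq 0$ this forces $U=0$ and $X\cong P$, so $P$ was not a \emph{proper} subquotient — contradiction. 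The only point requiring care is that "proper subquotient" should be read as $U\neq 0$ or $V\neq X$; the splitting argument handles the $U\neq 0$ reductions after we have arranged $V=X$, and the submodule reduction handles $V\neq X$.

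For part (b), suppose $\Lambda$ is Nakayama and $P$ is a submodule of a nonprojective indecomposable $X$, with inclusion $\iota\colon P\hookrightarrow X$. Here $P$ is a genuine submodule (the quotient $X/P$ need not be anything special), so part (a) does not apply directly; instead I would use that for a uniserial module, a submodule is determined by its length, and that the submodules of $X$ form a chain. Consider the radical series: $P\subseteq X$ means $P\cong \Lambda e/\mathrm{rad}^{\ell}(\Lambda e)$ sits inside $X$ as $\mathrm{rad}^{m}X$ for the appropriate $m$, i.e. $P\cong \mathrm{rad}^m X$ for some $m\geq 0$. If $m=0$ then $P=X$, contradicting nonprojectivity of $X$. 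If $m\geq 1$, then $\mathrm{rad}^m X$ is a proper submodule, hence a proper subquotient of the indecomposable $X$, so it cannot be projective by part (a) — contradiction. So the key step is simply the observation that a nonzero submodule of a uniserial module is either the whole module or a proper subquotient, and then (a) finishes it.

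The main obstacle, such as it is, is bookkeeping about what "proper" means and making sure the uniserial reductions are invoked cleanly rather than circularly: part (b)'s proof should cite part (a) only for \emph{proper} subquotients, and the case $m=0$ (where $P=X$) must be excluded by the nonprojectivity hypothesis rather than by (a). I expect no serious difficulty beyond this; the splitting of $X\twoheadrightarrow P$ for projective $P$ is the whole engine, and the uniserial hypothesis in (b) is only needed to guarantee that every submodule of $X$ is comparable to $P$ (equivalently, is a term of the radical filtration), which turns an arbitrary submodule inclusion into the "proper subquotient" situation of (a).
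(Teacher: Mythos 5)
There is a genuine gap, and it sits in part (b). Your proof of (b) reduces the case of a proper submodule $P\cong\mathrm{rad}^m X$, $m\geq 1$, to part (a) by calling it a ``proper subquotient.'' But part (a) cannot rule out projective \emph{proper submodules}: over these algebras a projective really can be a proper submodule of another indecomposable (projective) module --- in the paper's own Example 2.1.2 one has $P_1\subsetneq P_5\subsetneq P_4$, and the paper's proof explicitly concedes this (``it is possible to have two projectives which share the same socle''). This is exactly why (b) carries the extra hypothesis that $X$ is \emph{non}projective, a hypothesis your argument never uses beyond excluding $m=0$. The same issue infects your discussion at the end of (a): after replacing $X$ by $V$, the splitting argument yields $U=0$ and $P\cong V$, i.e.\ $P$ is a (possibly proper) submodule of the original $X$ --- that is not a contradiction, so your claim that ``the submodule reduction handles $V\neq X$'' is false. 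The only tenable reading of (a) is that a projective cannot occur as $V/U$ with $U\neq 0$; the submodule case is deliberately left to (b). What (b) actually requires is the Nakayama-specific maximality property that the paper's proof is built on: $P_i$ is the \emph{longest} relation-free path starting at vertex $i$. If $P_i\subsetneq X$ with $X$ uniserial, then $P_i$ is the bottom segment of $X$ (they share the socle), and the projective cover $P_a\twoheadrightarrow X$ (where $S_a=\mathrm{top}\,X$) is a path through vertex $i$ whose tail starting at $i$ has length at least that of $X$'s tail at $i$; if $X$ were not projective this tail would be strictly longer than $P_i$, contradicting maximality. Your proof contains no trace of this argument.

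For what it is worth, your treatment of the $U\neq 0$ case of (a) --- split the surjection $V\twoheadrightarrow P$, use that submodules of uniserial modules are indecomposable, conclude $V\cong U\oplus P$ is decomposable --- is correct and is a cleaner route than the paper's path-length argument for that case; but it proves only that a projective subquotient must be a submodule, and the remaining submodule case, which is the substance of (b), is left unproved.
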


\begin{proof}
A projective module at vertex $i$ has the following characterization: it is the longest possible path starting from $i$. Assume that $P=P_i$ is subquotient of another projective $P'$, however this immediately implies existence of longer path ending at index of $socP$ by keeping in mind that all modules are uniserial. By the same argument, a projective module can be a submodule of only a projective module.\\
It is possible to have two projectives which share the same socle.
\end{proof}

\begin{lemma}\label{folklore2}
Let $\Lambda$ be a cyclic Nakayama algebra of infinite global dimension. Assume that the module $M$ has infinite projective dimension. Then there exists $k$ such that $\Omega^i(M)$ is $\Omega$-periodic for all $i\geq k$. 
\end{lemma}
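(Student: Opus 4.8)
The plan is to exploit two standard facts about a Nakayama algebra $\Lambda$: that the indecomposable projectives are uniserial, so the (minimal) syzygy operator $\Omega$ sends an indecomposable module to an indecomposable module or to $0$; and that $\Lambda$ is representation finite, so there are only finitely many indecomposables up to isomorphism. Given these, the forward $\Omega$-orbit of any module is confined to a finite set and is therefore eventually periodic.

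First I would treat the case $M$ indecomposable. Since $\Lambda$ is Nakayama, $M$ is uniserial, say with $top\, M\cong S_i$ and Loewy length $\ell$; its projective cover is $P_i$ and $\Omega M\cong \mathrm{rad}^{\ell}P_i$, a submodule of the uniserial module $P_i$, hence again uniserial, so $\Omega M$ is indecomposable or zero. Because $\pdim M=\infty$ we have $\Omega^{t}M\neq 0$ for all $t$, and $\Omega^{t}M$ is never projective (if it were, then $\pdim M\le t<\infty$). Thus $\Omega^{t}M$ is a nonprojective indecomposable for every $t\ge 0$. Letting $\mathcal I$ be the finite set of isomorphism classes of nonprojective indecomposable $\Lambda$-modules, $\Omega$ restricts to a self-map of $\mathcal I$, and the sequence $M,\Omega M,\Omega^{2}M,\dots$ stays in $\mathcal I$; a self-map of a finite set has eventually periodic orbits, so there are $k_{0}\ge 0$ and $p\ge 1$ with $\Omega^{k_{0}+p}M\cong\Omega^{k_{0}}M$. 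Applying $\Omega^{s}$ to this isomorphism shows $\Omega^{i+p}M\cong\Omega^{i}M$ for every $i=k_{0}+s\ge k_{0}$, so $\Omega^{i}M$ is $\Omega$-periodic for all $i\ge k_{0}$.

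For general $M\cong\bigoplus_{\ell}M_{\ell}$ with $M_{\ell}$ indecomposable, I would use that minimal syzygies commute with finite direct sums: $\Omega^{i}M\cong\bigoplus_{\ell}\Omega^{i}M_{\ell}$. The summands $M_{\ell}$ with $\pdim M_{\ell}<\infty$ satisfy $\Omega^{i}M_{\ell}=0$ for $i$ large; the summands with $\pdim M_{\ell}=\infty$ (at least one exists, since $\pdim M=\infty$) each yield bounds $k_{\ell},p_{\ell}$ from the indecomposable case. Taking $k$ to be the maximum of all these bounds and $p=\operatorname{lcm}_{\ell}p_{\ell}$ over the infinite-projective-dimension summands, one gets $\Omega^{i+p}M\cong\bigoplus_{\pdim M_{\ell}=\infty}\Omega^{i+p}M_{\ell}\cong\bigoplus_{\pdim M_{\ell}=\infty}\Omega^{i}M_{\ell}\cong\Omega^{i}M$ for all $i\ge k$, which is the claim.

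There is no serious obstacle: the whole content sits in the two structural facts cited at the outset (uniseriality of projectives, hence $\Omega$ preserves indecomposability; and representation finiteness). The only step needing a little care is the direct-sum bookkeeping — observing that a finite direct sum of $\Omega$-periodic modules becomes periodic for a common multiple of the individual periods, and that summands of finite projective dimension are annihilated by high powers of $\Omega$ and so can be discarded.
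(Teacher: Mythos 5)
Your argument is correct and follows essentially the same route as the paper: representation finiteness of $\Lambda$ together with the fact that syzygies of indecomposables are indecomposable (or zero) forces the forward $\Omega$-orbit into a finite set, whence eventual periodicity by pigeonhole. The only difference is that you spell out the reduction to the indecomposable case and the direct-sum bookkeeping, which the paper leaves implicit.
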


\begin{proof}
Recall that there are only finitely many indecomposible modules for $\Lambda$ and all of them are finite dimensional. Furthermore syzygies are indecomposable. %This forces that there has to be repetitions in the syzygies.
Therefore, there exist $i\neq j$ such that $\Omega^i(M)\cong\Omega^j(M)$. Suppose $j>i$. Then $\Omega^{j-i}(\Omega^i(M))=\Omega^j(M)\cong \Omega^i(M)$. Therefore $\Omega^i(M)$ is $\Omega$-periodic.
\end{proof}
%%%%%%%
\begin{lemma}\label{folklor3} Let $0\rightarrow A\rightarrow B\rightarrow C\rightarrow 0$ be a nonsplit exact sequence of uniserial indecomposable modules.%, where $P$ is an indecomposable projective module. Then we have:
%\begin{itemize}
\begin{enumerate}[label=(\arabic*)]
\item Then $soc A\cong soc B$ and $top B\cong top C$.
\item If $A, B,C$ are $\bf\Delta$-modules then  ${\bf\Delta}$-$soc A\cong{\bf\Delta}$-$soc B$ and ${\bf\Delta}$-$top B\cong {\bf\Delta}$-$top C$.
\end{enumerate}
%\item Socle of $A$ is the same with socle of projective $P$.\item Top of $B$ is the same with top of projective $P$.\end{itemize}
\end{lemma}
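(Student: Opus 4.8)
The plan is to reduce part (1) to the elementary structure of uniserial modules, and then to deduce part (2) from part (1) by combining it with the disjointness of the $\bf\Delta$-composition factors established in Lemma \ref{simplesindelta} and Remark \ref{partitionofN}.

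\emph{Part (1).} Since the sequence is exact and nonsplit, the map $A\to B$ is a monomorphism whose image is a proper, nonzero submodule, so I identify $A$ with a submodule $0\neq A\subsetneq B$. As $B$ is uniserial, its submodules form the single chain $B\supsetneq\operatorname{rad}B\supsetneq\operatorname{rad}^2B\supsetneq\dots\supsetneq 0$, hence $A=\operatorname{rad}^tB$ for a unique $t$ with $1\leq t\leq \ell-1$, where $\ell$ is the Loewy length of $B$. Then $soc A=\operatorname{rad}^{\ell-1}B=soc B$, which gives $soc A\cong soc B$. For the tops, $C\cong B/A=B/\operatorname{rad}^tB$ is uniserial with radical $\operatorname{rad}B/\operatorname{rad}^tB$, so $top C\cong(B/\operatorname{rad}^tB)/(\operatorname{rad}B/\operatorname{rad}^tB)\cong B/\operatorname{rad}B=top B$, i.e. $top B\cong top C$.

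\emph{Part (2).} First I would record the auxiliary fact that for any $\bf\Delta$-module $M$ the $\bf\Delta$-socle and $\bf\Delta$-top are completely determined by the ordinary $soc M$ and $top M$. Fix a $\bf\Delta$-filtration $0=M_0\subset M_1\subset\dots\subset M_m=M$ with $M_i/M_{i-1}\cong\Delta_{j_i}$; the $\bf\Delta$-socle of $M$ is $\Delta_{j_1}$ and the $\bf\Delta$-top is $\Delta_{j_m}$. Since $M$ is uniserial, $soc M=soc M_1=soc\Delta_{j_1}=S_{k_{2j_1}}$ and $top M=top(M/M_{m-1})=top\Delta_{j_m}=S_{k_{2(j_m-1)}+1}$. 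By Lemma \ref{simplesindelta} and Remark \ref{partitionofN} every simple module occurs in the composition series of exactly one member of $\bf\Delta$, so the indices $j_1$ and $j_m$ are uniquely recovered from $soc M$ and $top M$; in particular the $\bf\Delta$-socle and $\bf\Delta$-top do not depend on the chosen filtration. Now, with $A,B,C$ all $\bf\Delta$-modules, part (1) yields $soc A\cong soc B$ and $top B\cong top C$, and the uniqueness just established upgrades these to ${\bf\Delta}\text{-}soc A\cong{\bf\Delta}\text{-}soc B$ and ${\bf\Delta}\text{-}top B\cong{\bf\Delta}\text{-}top C$.

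\emph{Main obstacle.} The only delicate point is the well-definedness claim inside part (2): one must be certain that the bottom and top $\bf\Delta$-layers of a $\bf\Delta$-module are intrinsic to the module rather than artefacts of a particular $\bf\Delta$-filtration. This is precisely where the disjointness of the composition factors of the $\Delta_j$'s (Lemma \ref{simplesindelta}), together with the fact that these composition factors partition the set of all simples (Remark \ref{partitionofN}), is indispensable; once this is in place the passage from part (1) to part (2) is immediate.
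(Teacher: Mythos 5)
Your proof is correct. The paper actually states Lemma \ref{folklor3} without any proof (it is treated as folklore), so there is no argument to compare against; your write-up supplies exactly the expected justification --- identifying $A$ with a radical power $\operatorname{rad}^tB$ of the uniserial module $B$ for part (1), and using the disjointness of the composition factors of the $\Delta_j$'s (Lemma \ref{simplesindelta}, Remark \ref{partitionofN}) to see that the $\bf\Delta$-socle and $\bf\Delta$-top are determined by the ordinary socle and top for part (2). The observation that well-definedness of the extremal $\bf\Delta$-layers is the only delicate point, and that it is settled by Lemma \ref{simplesindelta}, is exactly right.
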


%\begin{proof}
%$A$ is a uniserial module and $socA$ is a submodule of it by definition. By the exact sequence, we get that $A$ is a submodule of $P$, hence by transitivity and uniqueness of socle (since they are uniserial modules) $socA$ is submodule of $P$, so $A$ and $P$ have the same socle.\\
%For the next claim, we use exactness of the sequence again. $A$ is proper submodule of $P$, hence $P/A$ is isomorphic to $B$. If two uniserial modules are isomorphic then their radicals is also. Hence $B/rad B$ which is the top of $B$ is the same with $(P/A)/rad(P/A)$ which is indeed top of $P$. 
%\end{proof}

\begin{corollary}\label{topsoc} Let $\Lambda$ be a cyclic Nakayama algebra and let $0\rightarrow A\rightarrow B\rightarrow C\rightarrow 0$ be a nonsplit exact sequence of uniserial indecomposable modules. 
\begin{enumerate}[label=(\arabic*)]
\item Then $top A$ and $soc C$  are consecutive in the cyclic ordering of simples, i.e.  $top\,A\ \cdot\!\!\!\!>soc\,C$ up to isomorphism.
\item  If $A, B,C$ are $\bf\Delta$-modules then ${\bf\Delta}$-$top\,A$ and ${\bf\Delta}$-$soc\,C$  are consecutive in the cyclic ordering of ${\bf\Delta}$-simples, i.e.  ${\bf\Delta}$-$top\,A\ \cdot\!\!\!\!>{\bf\Delta}$-$soc\,C$ up to isomorphism.
\end{enumerate}
%If the sequence of uniserial cyclic Nakayama algebra $\Lambda$modules $0\rightarrow A\rightarrow P\rightarrow B\rightarrow 0$ is exact then socle of $B$ and top of $A$ are consecutive in the cyslic ordering of simples.
\end{corollary}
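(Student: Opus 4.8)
The plan is to deduce both statements from the already‑established Lemma \ref{folklor3} together with the cyclic‑ordering description of simples in Definition \ref{orderingd} and of $\bf\Delta$‑simples in Corollary \ref{Deltas}. Consider the nonsplit exact sequence $0\to A\to B\to C\to 0$ of uniserial indecomposables. First I would record, from Lemma \ref{folklor3}(1), the two identifications $\soc A\cong\soc B$ and $\operatorname{top} B\cong\operatorname{top} C$. Since $B$ is uniserial, its composition series runs (in the cyclic order $(*_S)$) from $\operatorname{top} B$ down to $\soc B$; in particular the composition factor of $B$ sitting immediately below $\operatorname{top} B$ is exactly the second composition factor in that series. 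The key observation is that $A$ embeds in $B$ with $\soc A\cong\soc B$, so $A$ is the bottom segment of the uniserial module $B$, while $C\cong B/A$ is the top segment; hence $\operatorname{top} A$ is the composition factor of $B$ lying immediately below the bottom composition factor of $C$ in the order $(*_S)$. Because the sequence is nonsplit, $A$ is a proper nonzero submodule, so these two segments overlap only at the cut, giving that $\operatorname{top} A$ and $\soc C$ are genuinely consecutive: $\operatorname{top} A\ \cdot\!\!\!\!>\soc C$. This is part (1).

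For part (2) I would run the identical argument one level up, using Lemma \ref{folklor3}(2) in place of \ref{folklor3}(1): when $A,B,C$ are $\bf\Delta$‑modules we have ${\bf\Delta}$‑$\soc A\cong{\bf\Delta}$‑$\soc B$ and ${\bf\Delta}$‑$\operatorname{top} B\cong{\bf\Delta}$‑$\operatorname{top} C$. Here the role of "uniserial composition series" is played by the $\bf\Delta$‑filtration of $B$: since $A\hookrightarrow B$ with the same $\bf\Delta$‑socle, $A$ is a $\bf\Delta$‑subfiltration at the bottom of $B$ and $C$ is the quotient $\bf\Delta$‑filtration at the top, so ${\bf\Delta}$‑$\operatorname{top} A$ is the $\Delta_j$ sitting immediately below the bottom $\Delta_{j'}$ of $C$ in the induced cyclic ordering of Corollary \ref{Deltas}(1). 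Nonsplitness again forces the cut to be a single step, yielding ${\bf\Delta}$‑$\operatorname{top} A\ \cdot\!\!\!\!>{\bf\Delta}$‑$\soc C$. One should note that this uses that a $\bf\Delta$‑submodule of a $\bf\Delta$‑module is again a $\bf\Delta$‑module with the expected filtration, which is where Lemma \ref{submodules} (applied to the uniserial structure) and the disjointness Lemma \ref{simplesindelta} enter to guarantee the filtration is well‑defined and unambiguous.

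The main obstacle I anticipate is making precise the claim that "nonsplit" forces the overlap of the bottom segment $A$ and the top segment $C$ inside $B$ to be exactly one composition step (respectively one $\bf\Delta$‑step), rather than zero — a zero overlap would correspond to $B\cong A\oplus C$. Concretely: if $A$ and $C$ did not meet, then the length (resp. $\bf\Delta$‑length) of $B$ would be the sum of those of $A$ and $C$, and uniseriality of $B$ together with the fact that $\operatorname{top} A$ and $\soc C$ would then be equal forces a splitting, contradicting the hypothesis; and they cannot overlap in more than one step because $C\cong B/A$ has length exactly $\operatorname{length}(B)-\operatorname{length}(A)$. So the overlap is exactly one step, which is precisely the statement that $\operatorname{top} A$ and $\soc C$ are consecutive. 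I expect this bookkeeping to be short once the uniserial (resp. $\bf\Delta$‑filtered) picture of $B$ as "$C$ stacked on top of $A$" is set up carefully; everything else is a direct invocation of Lemma \ref{folklor3} and the cyclic orderings already proved.
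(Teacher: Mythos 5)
Your core argument is the one the paper uses: since $B$ is uniserial, its composition series is $C$ stacked on top of $A$ (the paper labels the factors of $C$ as $S_1,\dots,S_z$ and observes that $top\,A\cong S_{z+1}$), and part (2) is the same argument run at the $\bf\Delta$ level via the ordering of Corollary \ref{Deltas}; that part of your proposal is correct and matches the paper. However, the ``overlap'' bookkeeping in your final paragraph rests on a misconception: by exactness the length of $B$ always equals the sum of the lengths of $A$ and $C$, so the bottom segment $A$ and the top segment $C\cong B/A$ never overlap at all, and length additivity certainly does not force the sequence to split. There is nothing to check there --- exactness places $top\,A$ in the composition-series slot of $B$ immediately below $soc\,C$, adjacency of slots in a uniserial module is precisely consecutiveness in the cyclic order $(*_S)$, and nonsplitness (equivalently, indecomposability of $B$ together with $A\neq 0$ and $C\neq B$) is needed only to ensure that $top\,A$ and $soc\,C$ are both defined.
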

\begin{proof}
(1) Since the module $B$ is uniserial and indecomposable, all composition factors of $B$  appear in the cyclic order of $(*_S)$. %Therefore $top A\ \cdot\!\!\!\!>socC$ (up to isomorphism).

Without loss of generality assume that $C$ has the following composition factors 
\begin{align}
C=\begin{vmatrix}
   S_1 \\
   \vdots \\
   S_z
\end{vmatrix}
\end{align}
Since $B$ is indecomposable uniserial
%the projective cover of $B$, 
and $A$ is nontrivial module, all composition factors of $C$ have to appear in $B$. Moreover, $S_{z+1}$ has to be a composition factor of $B$. (Otherwise  A is trivial and $C\cong B$). By the exactness of the sequence, $B/A\cong C$ hence  $top A\cong S_{z+1}$, which is consecutive to $S_z$. 

(2) Using the same argument on $\bf\Delta$-modules and the corollary \ref{Deltas}, the claim follows.
\end{proof}

We recall that $N$ and $1$ is also consecutive.
We can see it on the Example \ref{example}:
\begin{align}
0\rightarrow \begin{vmatrix}
   S_3
   \end{vmatrix} \rightarrow \begin{vmatrix}
   S_1 \\
   S_2 \\
   S_3
\end{vmatrix} \rightarrow \begin{vmatrix}
   S_1 \\
   S_2
\end{vmatrix} \rightarrow 0\\
0\rightarrow \begin{vmatrix}
   S_1\\
   S_2\\
   S_3
   \end{vmatrix} \rightarrow \begin{vmatrix}
	S_4\\   
   S_5\\
   S_1 \\
   S_2 \\
   S_3
\end{vmatrix} \rightarrow \begin{vmatrix}
   S_4 \\
   S_5
\end{vmatrix} \rightarrow 0
\end{align}
The simple modules $S_5$ and $S_1$ are consecutive, and similarly $S_2$ and $S_3$.

\begin{remark}Let $\Lambda$ be a Nakayama algebra. Then:
\begin{enumerate}[label=(\roman*)]
\item All indecomposable modules are uniserial.
\item All non-zero syzygies of indecomposable modules are indecomposable.
\end{enumerate}
\end{remark}

\begin{proposition}\label{deltafilt} 
Let $X$ be an indecomposable $\Lambda$-module. Then:
%Let $M\in \cA$ be a module and assume that  We have the following:
\begin{enumerate}[label=(\roman*)]
\item For $\forall i\geq 1$, if $\Omega^i(X)\neq0$ then $soc \,\Omega^i(X)\in \mathcal S$ up to isomorphism.
\item For $\forall i\geq 2$, if $\Omega^i(X)\neq0$ then $top\,\Omega^i(X)\in \mathcal S'$ up to isomorphism.
\end{enumerate}
%\begin{itemize}
%\item The second syzygy of $M$, $\Omega^2(M)$ is an indecomposable and nonprojective module.
%\item Socle of $\Omega^2(M)$ is an element of the set $\cS$.
%\item Top of $\Omega^2(M)$ is an element of the set $\cS'$.
%\end{itemize}
\end{proposition}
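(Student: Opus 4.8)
The plan is to analyze the structure of syzygies of indecomposable modules over a cyclic Nakayama algebra, exploiting the fact that syzygies are again indecomposable and uniserial, and that projective covers have a controlled structure relative to the relations REL. The key observation is that $\Omega^i(X)$ for $i\geq 1$ is the kernel of a projective cover $P(\Omega^{i-1}(X))\twoheadrightarrow \Omega^{i-1}(X)$; since all modules are uniserial, this kernel sits inside an indecomposable projective $P_m$ as its unique submodule of the appropriate length, and hence $\mathrm{soc}\,\Omega^i(X)\cong \mathrm{soc}\,P_m$. So everything reduces to understanding which simples can occur as socles of indecomposable projectives — and by the classification of projective modules in Section~\ref{projectives}, the socle of \emph{any} indecomposable projective is one of $\{S_{k_2}, S_{k_4},\dots,S_{k_{2r}}\}$, which is exactly the set $\mathcal S$ (see Notation~\ref{classes} and the surrounding discussion). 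This proves part (i) for all $i\geq 1$.

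For part (ii), the idea is to apply part (i) one step further down. If $\Omega^i(X)\neq 0$ with $i\geq 2$, then $\Omega^i(X)=\Omega(\Omega^{i-1}(X))$, and $\Omega^{i-1}(X)$ is a nonzero, nonprojective indecomposable module (nonprojective because otherwise its syzygy would vanish), which by part~(i) applied to $i-1\geq 1$ has $\mathrm{soc}\,\Omega^{i-1}(X)\in\mathcal S$, say $\mathrm{soc}\,\Omega^{i-1}(X)\cong S_{k_{2j}}$. The projective cover of a uniserial module with top $S_t$ is $P_t$, and here $\Omega^{i-1}(X)$ is a submodule of $P_{k_{2j}}$-type data — more precisely $\Omega^{i-1}(X)$ is a nontrivial proper submodule of some indecomposable projective $P_m$ with socle $S_{k_{2j}}$, and $\Omega^i(X)$ is then the kernel of the projective cover of $\Omega^{i-1}(X)$. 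Writing $\mathrm{top}\,\Omega^{i-1}(X)\cong S_s$, the projective cover is $P_s$, and $\Omega^i(X)$ is the submodule of $P_s$ with top $\mathrm{top}\,(\mathrm{rad}\,P_s)\cong S_{s+1}$. Since $\Omega^{i-1}(X)$ is a proper submodule of a projective with socle $S_{k_{2j}}$, the index $s$ is, by the explicit description of the classes $cl(j)$, the label of a projective strictly between $P_{min}^{(j)}$ and $P_{max}^{(j)}$ in the chain of inclusions, or equivalently $s\in[k_{2j-1},k_{2j-3}+1)$ read cyclically — whence $s+1$ lands in $\mathcal S'=\{S_{k_2+1},\dots,S_{k_{2r}+1}\}$. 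The cyclic orderings of Lemma~\ref{c-ordering} and Proposition~\ref{cyclicorder} are what make this bookkeeping precise.

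Concretely, the steps I would carry out are: (1) recall that $\Omega^i(X)$ is nonzero indecomposable uniserial and is, up to isomorphism, the unique submodule of the indecomposable projective $P(\mathrm{top}\,\Omega^{i-1}(X))$ complementary to $\Omega^{i-1}(X)$; (2) use the classification of socles of indecomposable projectives to conclude $\mathrm{soc}\,\Omega^i(X)\in\mathcal S$, giving (i); (3) for (ii), observe $\mathrm{top}\,\Omega^i(X)\cong \mathrm{top}\,\mathrm{rad}\,P(\mathrm{top}\,\Omega^{i-1}(X))\cong S_{t+1}$ where $S_t\cong\mathrm{top}\,\Omega^{i-1}(X)$; (4) show $t\in k_{odd}$-interval, i.e., $S_t$ is the top of a projective lying in some class $cl(j)$ but is \emph{not} the top of $P_{max}^{(j)}=I_{k_{2j}}$ itself (that last projective-injective case would force $\Omega^{i-1}(X)$ to be injective, but a proper nonzero syzygy of an indecomposable is never projective-injective here — more carefully, it simply is not the top of the maximal projective because $\Omega^{i-1}(X)$ is a \emph{proper} submodule of some projective and its top shifts the socle-index appropriately), so $t+1\in k_{even}+1$, i.e.\ $S_{t+1}\in\mathcal S'$.

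The main obstacle I anticipate is step~(4): pinning down exactly why $\mathrm{top}\,\Omega^i(X)$ must be in $\mathcal S'$ rather than merely "one past $\mathcal S$" requires tracking the projective cover index carefully, and the case distinction around whether $\Omega^{i-1}(X)$ could itself be a maximal projective $P_{max}^{(j)}$ (equivalently, an indecomposable projective-injective) must be ruled out — this is where Lemma~\ref{folklore1}(a),(b) enters, guaranteeing that a proper nonzero syzygy of an indecomposable module is never projective, so $\Omega^{i-1}(X)$ is a genuinely proper submodule of its enveloping projective and the top-index computation lands strictly inside a class, forcing $\mathrm{top}\,\Omega^i(X)\in\mathcal S'$. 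The requirement $i\geq 2$ in (ii) (versus $i\geq 1$ in (i)) is precisely what guarantees we may apply (i) to $\Omega^{i-1}(X)$ to know its socle is in $\mathcal S$ before taking one more syzygy.
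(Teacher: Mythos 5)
Your part (i) is correct and is essentially the paper's own argument: $\Omega^i(X)$ is a nonzero submodule of the uniserial projective cover $P(\Omega^{i-1}(X))$, hence shares its socle, and the socles of indecomposable projectives are exactly the simples in $\mathcal S$.

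Part (ii), however, contains a genuine error. You claim that $\Omega^i(X)$ is ``the submodule of $P_s$ with top $top(\mathrm{rad}\,P_s)\cong S_{s+1}$'' where $S_s\cong top\,\Omega^{i-1}(X)$. This identifies the syzygy with the radical of the projective cover, which holds only when $\Omega^{i-1}(X)$ is simple. In general, if $\Omega^{i-1}(X)$ has length $\ell$, the kernel of $P_s\twoheadrightarrow\Omega^{i-1}(X)$ is the submodule of colength $\ell$, whose top is $S_{s+\ell}$, i.e.\ the cyclic successor of $soc\,\Omega^{i-1}(X)$, not of $top\,\Omega^{i-1}(X)$. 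Your step (4) inherits this error: from ``$s$ lies in the interval of labels of the class $cl(j)$'' you conclude $S_{s+1}\in\mathcal S'$, but $\mathcal S'$ contains only one simple per class, namely $S_{k_{2j}+1}$, while $s+1$ as you describe it ranges over a whole interval of indices, so the conclusion does not follow. The correct route --- the one the paper takes --- is to apply Corollary \ref{topsoc} (equivalently Lemma \ref{folklor3}) to the nonsplit sequence $0\to\Omega^i(X)\to P(\Omega^{i-1}(X))\to\Omega^{i-1}(X)\to 0$: the top of the kernel is the cyclic successor of the \emph{socle} of the cokernel, so once part (i) gives $soc\,\Omega^{i-1}(X)\cong S_{k_{2j}}$ (here $i-1\geq 1$ is exactly why $i\geq 2$ is needed), you get $top\,\Omega^i(X)\cong S_{k_{2j}+1}\in\mathcal S'$ immediately. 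Your case analysis about $\Omega^{i-1}(X)$ possibly being projective-injective is moot: if $\Omega^{i-1}(X)$ were projective then $\Omega^i(X)=0$, which is excluded by hypothesis.
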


\begin{proof}
%\item Since syzygy modules are submodules of certain projective modules, they are modules. In the hypothesis it is guaranteed that $\Omega^2(M)$ cannot be a projective by the assumptionDone.
%\item  
(i) Consider the first few steps of the projective resolution of the module $X$:
\[\xymatrixcolsep{10pt}
\xymatrix{
\ar[r] & P\Omega(X)\ar[rd] \ar[rr] && PX \ar[rr] && X  \\
\Omega^2(X)\ar[ru]& &\Omega(X) \ar[ru] & & 
}\]
Observe that $\Omega^i(X)$ is a submodule of $P(\Omega^{i-1}X)$, the  projective cover of $\Omega^{i-1}(X)$. Hence 
%by cor. \ref{topsoc}, 
$soc\,\Omega^i(X)=soc\,P(\Omega^{i-1}(X))$ for $\forall i\geq 1$. By the characterization of projective modules of $\Lambda$ from Section \ref{projectives}, $soc\,\Omega^i(X)\in \cS$ for $\forall i\geq 1$ and when $\Omega^i(X)\neq0$. 

(ii) Consider the following short exact non-split sequences for $\forall i\geq 2$:
$$0\rightarrow \Omega^i(X)\rightarrow P(\Omega^{i-1}(X))\rightarrow \Omega^{i-1}(X)\rightarrow 0.$$
Since $i\geq 2$, it follows from (i) that $soc\,\Omega^{i-1}(X)\in \mathcal S$. Hence $soc\,\Omega^{i-1}(X)$ is $S_{k_{2j}}$ for some $j\in\{1,..,r\}$. Then by Corollary \ref{topsoc} it follows that $top\,\Omega^i(M)$ is $S_{k_{2j}+1}$, hence $top\,\Omega^i(X)\in\mathcal S'$ up to isomorphism.
%Also we see that the second syzygy and the first syzygy extends each other nontrivially, again by cor. \ref{topsoc}, $top\Omega^2(M)\in\cS'$, which proves the second claim. 
%To show the first claim, observe that $\Omega^2(M)$ is a submodule of projective $P\Omega(M)$, and by Lemma \ref{folklor3} they have the same socle. We can conclude that $soc\Omega^2(M)\in\cS$ immediately since for any projective $\Lambda$module, socle of them are in the set $\cS$. 
\end{proof}

%\begin{corollary} For any syzygy module $\Omega^k(M)$, where $k\geq 2$, top and socle of it are in $\cS'$ and $\cS$ respectively.
%\end{corollary}
%The above claim holds for projective modules which appear as syzygy at the resolutions of modules of finite projective dimension.

%\begin{proof}
%We prove it by induction. By the previous proposition $\Omega^2(M)$ satisfies hypothesis. Assume that this holds for $\Omega^k(M)$. Consider the exact sequence :
%\begin{align}
%0\mapsto \Omega^{k+1}(M)\mapsto P\Omega^k(M) \mapsto \Omega^k(M) \mapsto 0
%\end{align}
%Since $\Omega^{k+1}(M)$ is a submodule of a projective module, its socle has to be in the set $\cS$ by lemma \ref{folklor3}. If we apply cor. \ref{topsoc}, we get that top of it is consecutive with $\Omega^k(M)$, hence it is in $\cS'$.
%\end{proof}
\begin{corollary} \label{topsocdelta}Let $X$ be an indecomposable $\Lambda$-module. Then for $\forall i\geq 2$, the modules $\Omega^i(X)$ are filtered by the modules in $\bf\Delta$, i.e. are $\bf\Delta$-modules.
\end{corollary}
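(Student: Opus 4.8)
The plan is to combine Proposition \ref{deltafilt} with Proposition \ref{topsocilk}. Fix an indecomposable module $X$ and an integer $i\geq 2$, and assume $\Omega^i(X)\neq 0$ (if it is zero there is nothing to prove, since the zero module is trivially $\bf\Delta$-filtered). Since $\Lambda$ is Nakayama, the syzygy $\Omega^i(X)$ is indecomposable and uniserial. By Proposition \ref{deltafilt}(i) we have $soc\,\Omega^i(X)\in\mathcal S$, and by Proposition \ref{deltafilt}(ii) we have $top\,\Omega^i(X)\in\mathcal S'$. Thus $\Omega^i(X)$ is exactly a module of the kind considered in Proposition \ref{topsocilk}: an indecomposable module with top in $\mathcal S'$ and socle in $\mathcal S$. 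Part (b) of that proposition then says precisely that such a module has a filtration by the modules in $\bf\Delta$, which is the assertion.

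So the proof is essentially a two-line citation. First I would write the sentence recording that $\Omega^i(X)$ is indecomposable (by the remark on syzygies of Nakayama algebras just above) so that the hypotheses of Proposition \ref{topsocilk} are literally met. Then I would invoke Proposition \ref{deltafilt}(i) and (ii) to place $top\,\Omega^i(X)$ in $\mathcal S'$ and $soc\,\Omega^i(X)$ in $\mathcal S$ — this is where the bound $i\geq 2$ is genuinely used, since part (ii) of that proposition requires $i\geq 2$. Finally I would apply Proposition \ref{topsocilk}(b) to conclude that $\Omega^i(X)$ is a $\bf\Delta$-module.

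There is no real obstacle here: all the work has already been done in the preceding propositions, and this corollary is just the convenient repackaging of their conclusions. The only point that requires a tiny bit of care is the degenerate case $\Omega^i(X)=0$, which should be handled (or explicitly excluded) at the start, and the observation that one must start at $i\geq 2$ rather than $i\geq 1$ because controlling the top of the syzygy needs one extra step of the resolution. If one wanted to be thorough, one could also remark that the filtration is actually a filtration by \emph{consecutive} $\bf\Delta$-modules in the cyclic ordering of Corollary \ref{Deltas}, since each successive extension in the filtration is a nonsplit extension of uniserial indecomposables and hence subject to Corollary \ref{topsoc}(2); but this refinement is not needed for the statement as written and I would leave it to later sections where it is actually used.
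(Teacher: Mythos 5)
Your proof is correct and is essentially identical to the paper's: both apply Proposition \ref{deltafilt} to place the socle of $\Omega^i(X)$ in $\mathcal S$ and its top in $\mathcal S'$, then invoke Proposition \ref{topsocilk}(b) to obtain the $\bf\Delta$-filtration. Your version is slightly more careful about the zero-syzygy case and about where the hypothesis $i\geq 2$ enters, but the route is the same.
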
 
\begin{proof}Since socles and tops of modules appearing as summands in $\Omega^i(X)$, $i\geq 2$ are in $\cS$ and $\cS'$ respectively, by Proposition \ref{topsocilk}, all these modules have $\bf\Delta$-filtration.
\end{proof}
%\begin{remark}\label{topsocdelta} Since socles and tops of modules appearing in $\Omega^i$, $i\geq 2$ are in $\cS$ and $\cS'$ respectively, by proposition \ref{topsocilk}, all these modules have $\bf\Delta$filtration. Theorem \ref{thm1} deals with modules of $\varphi\dim\leq 2$. That's why we can assume that all modules have $\Delta$ filtration from now on.
%\end{remark}

%%%%%%%

\begin{theorem}
If the set ${\bf\Delta}=\{\Delta_1,..,\Delta_r\}$ contains a projective module, then the global dimension of $\Lambda$ is finite.
\end{theorem}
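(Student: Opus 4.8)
The plan is to argue by contrapositive: assuming $\gldim\Lambda=\infty$, I will show that no $\Delta_j$ can be projective. The key structural input is Corollary \ref{topsocdelta}, which says that every high syzygy $\Omega^i(X)$ (for $i\ge 2$) of every indecomposable module is a $\bf\Delta$-module, together with Lemma \ref{folklore2}, which guarantees that for a module $M$ of infinite projective dimension the syzygies eventually become $\Omega$-periodic. Since $\gldim\Lambda=\infty$, such an $M$ exists; fixing $k$ as in Lemma \ref{folklore2}, the module $Y:=\Omega^k(M)$ is a nonzero $\Omega$-periodic $\bf\Delta$-module, so $\Omega(Y)$ is again a nonzero non-projective $\bf\Delta$-module, and the whole periodic chain $Y,\Omega(Y),\Omega^2(Y),\dots$ consists of nonzero non-projective $\bf\Delta$-modules.

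Now suppose for contradiction that some $\Delta_j\in\bf\Delta$ is projective. First I would observe that then $\Delta_j=P_{k_{2(j-1)}+1}$, the projective cover of its top $S_{k_{2(j-1)}+1}\in\mathcal S'$; being projective with socle $S_{k_{2j}}\in\mathcal S$, it is in fact one of the projective-injective modules listed in Section \ref{projectives} — indeed by the minimality in Definition \ref{definitionofdelta} it must be the \emph{shortest} projective with socle $S_{k_{2j}}$, namely $P_{min}^{(j)}$. The crucial consequence is extracted from Lemma \ref{folklore1}(b): a projective module cannot be a submodule of a non-projective indecomposable module. So if $W$ is any non-projective indecomposable $\bf\Delta$-module whose $\bf\Delta$-socle is $\Delta_j$, then the bottom $\bf\Delta$-layer $\Delta_j$ sits inside $W$ as an (ordinary) submodule — contradicting \ref{folklore1}(b). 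Hence no non-projective indecomposable $\bf\Delta$-module can have $\bf\Delta$-socle $\Delta_j$. But by Proposition \ref{deltafilt}(i) combined with the construction in Proposition \ref{topsocilk}, the $\bf\Delta$-socle of any syzygy $\Omega^i(X)$, $i\ge 2$, is precisely the $\Delta$-module whose ordinary socle matches $soc\,\Omega^i(X)\in\mathcal S$; as $soc\,\Omega^i(X)=soc\,P(\Omega^{i-1}(X))$ ranges over the socles of the projective-injectives, it can certainly take the value $S_{k_{2j}}$.

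So the remaining point is to produce a concrete syzygy with $\bf\Delta$-socle $\Delta_j$ while staying non-projective: take the projective-injective $P_{max}^{(j)}=I_{k_{2j}}$ — or any projective with socle $S_{k_{2j}}$ that is strictly longer than $\Delta_j=P_{min}^{(j)}$, which exists whenever the class $cl(j)$ has more than one member — and look at $X:=\Omega(\text{top }P^{(j)}_{max})$, a proper non-projective submodule of $P^{(j)}_{max}$ with socle $S_{k_{2j}}$; its second syzygy $\Omega^2(\text{top }P^{(j)}_{max})$, if nonzero, is a $\bf\Delta$-module with $\bf\Delta$-socle $\Delta_j$ and is non-projective by \ref{folklore1}, giving the contradiction directly. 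The one case to dispatch separately is when $cl(j)=\{\Delta_j\}$ (so $\Delta_j=P_{min}^{(j)}=P_{max}^{(j)}$ is the unique projective with that socle): then I would instead use the $\Omega$-periodic chain from the first paragraph — one of its terms $\Omega^i(Y)$ has ordinary socle in $\mathcal S$, and if that socle is $S_{k_{2j}}$ we are done by \ref{folklore1}(b) as above; if the periodic orbit never meets socle $S_{k_{2j}}$, I would instead run the syzygy of $S_{k_{2j}+1}=top\,\Delta_j=top\,\Delta_j$ to loop back, using Lemma \ref{connecting} to track how $\bf\Delta$-socles cycle under $\Omega$, and derive that some syzygy in the periodic family lands on $\Delta_j$ — here the bookkeeping of which $\bf\Delta$-module arises as socle is the main obstacle.

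\textbf{Main obstacle.} The genuinely delicate part is the last one: showing that, under $\gldim\Lambda=\infty$, the $\Omega$-periodic family of $\bf\Delta$-modules supplied by Lemma \ref{folklore2} (or an explicitly chosen syzygy sequence) is \emph{forced} to have some term with $\bf\Delta$-socle equal to the hypothetically projective $\Delta_j$. This is where one needs a clean description of how $\bf\Delta$-socles evolve along a projective resolution — essentially a $\bf\Delta$-analogue of Lemma \ref{connecting}(ii) tracking $\Omega$ on $\bf\Delta$-tops/socles around the cycle — rather than the soft arguments used elsewhere. Everything else (that a projective $\Delta_j$ is $P_{min}^{(j)}$ and projective-injective; that it cannot be an ordinary submodule of a non-projective indecomposable; that high syzygies are $\bf\Delta$-modules) is immediate from the cited lemmas.
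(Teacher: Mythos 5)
Your reduction is sound up to a point: given a projective $\Delta_j$, Lemma \ref{folklore1}(b) does show that no non-projective indecomposable ${\bf\Delta}$-module can have ${\bf\Delta}$-socle $\Delta_j$. But this very fact makes your step (B) --- exhibiting a non-projective syzygy with ${\bf\Delta}$-socle $\Delta_j$ --- impossible to achieve by pointing at a concrete module: any candidate must come out projective or zero. Your proposed witness illustrates this. If $cl(j)$ has more than one member, then $\Omega(\text{top}\,P_{max}^{(j)})=\text{rad}\,P_{max}^{(j)}$ is itself the next projective $P_{k_{2j-3}+2}$ of the class (the projectives in $cl(j)$ are exactly the submodules of $P_{max}^{(j)}=I_{k_{2j}}$ of length at least $|P_{min}^{(j)}|$, with consecutive lengths), so $\Omega^2(\text{top}\,P_{max}^{(j)})=0$; if $cl(j)$ is a singleton, the projective cover of $\text{rad}\,P_{min}^{(j)}$ is $P_{k_{2j-1}+1}=P_{max}^{(j+1)}$, so the second syzygy has ${\bf\Delta}$-socle $\Delta_{j+1}$, not $\Delta_j$. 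Moreover the claim that this syzygy ``is non-projective by \ref{folklore1}'' is a misapplication: that lemma forbids a projective from sitting inside a \emph{non-projective} indecomposable, and says nothing about a submodule of a projective cover. So the contradiction cannot be extracted from the ${\bf\Delta}$-socle of any single syzygy; the real content --- which you correctly flag as the ``main obstacle'' and leave open --- is to show that an \emph{infinite} resolution is forced to meet $\Delta_j$. (This can be repaired: by Corollaries \ref{topsoc}(2) and \ref{Deltas}, the ${\bf\Delta}$-composition factors of $\Omega^iY$ form a cyclic interval $\Delta_{a_i},\dots,\Delta_{a_i+l_i-1}$ with $a_{i+1}=a_i+l_i$ and $l_i\geq 1$, so along an infinite resolution these intervals tile $\Zz/r\Zz$ and some $\Omega^iY$ acquires $\Delta_j$ as a ${\bf\Delta}$-composition factor, contradicting Lemma \ref{folklore1}(a). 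But this argument is absent from your write-up, so as it stands the proof has a genuine gap.)

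The paper avoids the issue entirely by arguing in the forward direction: assuming $\Delta_r$ is projective, it proves by induction on $d$ that every ${\bf\Delta}$-module with ${\bf\Delta}$-top $\Delta_{r-d}$ has projective dimension at most $d$ (the base case being that a module with ${\bf\Delta}$-top $\Delta_r$ must \emph{equal} $\Delta_r$, by Lemma \ref{folklore1}), and then invokes Corollary \ref{topsocdelta} to conclude that all second syzygies, hence all modules, have finite projective dimension. Two of your side assertions are also unjustified: a projective $\Delta_j$ need not be $P_{min}^{(j)}$ (its top is $S_{k_{2(j-1)}+1}$, which can differ from the top $S_{k_{2j-1}}$ of $P_{min}^{(j)}$ --- e.g.\ $N=6$ with relations $\boldsymbol\alpha_2\boldsymbol\alpha_1=0$ and $\boldsymbol\alpha_6\boldsymbol\alpha_5=0$ gives $\Delta_2\cong P_3\supsetneq P_5=P_{min}^{(2)}$), and it need not be projective-injective, since those are the \emph{maximal} members of their classes. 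Neither claim is load-bearing, but neither is ``immediate from the cited lemmas.''
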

\begin{proof}
Without loss of generality, let $\Delta_r$ be a projective module. % appears in the set $\Delta$, where $r=\vert\Delta\vert$.
 Let $M$ be an indecomposable $\Lambda$-module. We want to show that $\pdim M <\infty$. If $\pdim M \leq 1$ we are done. Assume $\pdim M \geq 2$. Then $\Omega^2(M)\neq 0$. By  the Corollary \ref{topsocdelta} it follows that $\Omega^2(M)$ is filtered by the modules in $\bf\Delta$. So it is enough to show that all $\bf\Delta$-modules have finite projective dimension. %The proof for that will be done by induction. 

Claim: Let  $D$ be a $\bf\Delta$-module with $\bf\Delta$-top isomorphic to $\Delta_{r-d}$.  Then $\pdim D\leq d$. \\
The proof will be by induction on $d$. It is clear that $0\leq d\leq r-1$. For $d=0$, notice that any module which has $\Delta_r$ as the $\bf\Delta$-top must be isomorphic to $\Delta_r$ since $\Delta_r$ is projective.

%For $d=1$, we get:
%\begin{align}
%P_{\Delta_r}=\Delta_r,\quad P_{\Delta_{r-1}}=\begin{matrix}
%\Delta_{r-1}\\\Delta_{r}\end{matrix} \,\text{or}\, P_{\Delta_{r-1}}=\Delta_{r-1}\end{align}
%We point out here that since $\Delta_r$ is projective, it cannot be subquotient of any other projective by lemma \ref{folklore1}, so socle of $P_{\Delta_{r-1}}$ has to be either $\Delta_{r-1}$ or $\Delta_r$. In both cases $p\dim\Delta_{r-1}$ is $\leq 1$. We apply induction to $d$. For $d=1$, it holds. 
Assume that the claim is true for $d=m-1$. We need to analyze the case $d=m$. Consider a module $D'$ with $\bf\Delta$-top isomorphic to $\Delta_{r-m}$. Let  $\bf\Delta$-soc of $D'$ be isomorphic to $\Delta_{r-t}$, for some $t$ such that $2\leq t\leq m$. Then the short exact sequence
$$0\rightarrow \Omega(D')\rightarrow P(D')\rightarrow D'\rightarrow 0$$
has a more precise description in terms of  $\bf\Delta$-filtrations by using Corollary \ref{topsoc} :
 %have the following resolution by the help of corollary \ref{topsoc}:
\begin{align}
0\rightarrow \begin{matrix}
\Delta_{r-(t-1)}\\
\vdots\\
{\bf\Delta} socP_{\Delta_{r-m}}\end{matrix}\rightarrow P_{\Delta_{r-m}}\rightarrow 
\begin{matrix}
\Delta_{r-m}\\
\vdots\\
\Delta_{r-t}
\end{matrix}\rightarrow 0.
\end{align}
Then $\pdim D'=1+\pdim\Omega(D')$. Since $\bf\Delta$-top of $\Omega(D')$ is isomorphic to $\Delta_{r-(t-1)}$, it follows by induction that $\pdim \Omega(D') \leq t-1$ which implies that 
\begin{align}
\pdim D'\leq t\leq m
\end{align}
In particular, each module in ${\bf\Delta}=\{\Delta_1,..,\Delta_r\}$ has finite projective dimension and therefore any extension of these modules has finite projective dimension. As a result, $\Lambda$ is of finite global dimension.
\end{proof}

\section{The $\varphi$-dimension} 

We now recall the definition of the function $\varphi$ for modules over any artin algebra $\Lambda$, and also give a particularly nice description of this function for Nakayama algebras.

\subsection{Artin algebras and their $\varphi$-dimension}
In order to define $\varphi(M)$ for each module $M$ we need the following set-up:
let $K_0$ be the abelian group generated by all symbols $\left[X\right]$, where $X$ is a finitely generated $\Lambda$-module, modulo the relations:
\begin{enumerate}[label=\roman*)]
\item $\left[C\right]=\left[A\right]+\left[B\right]$ if $C\cong A\oplus B$
\item $\left[P\right]=0$ if $P$ is projective.
\end{enumerate}
Then $K_0$ is the free abelian group generated by the isomorphism classes of indecomposable finitely generated nonprojective $\Lambda$-modules. For any finitely generated $\Lambda$-module $M$ let $L\left[M\right]:=\left[\Omega M\right]$ where $\Omega M$ is the first syzygy of $M$. Since $\Omega$ commutes with direct sums and takes projective modules to zero this gives a homomorphism $L : K_0 \mapsto K_0$. For every finitely generated $\Lambda$-module $M$ let $\left\langle addM\right\rangle$ denote the subgroup of $K_0$ generated by all the indecomposable summands of $M$, which is a free abelian group since it is a subgroup of the free abelian group $K_0$.  
%{\color{green!70!black} 
In other words, if $M=\oplus_{i=1}^m{M_i}^{n_i}$ then $\left\langle addM\right\rangle=\left\langle \{[M_i]\}_{i=1}^m\right\rangle$ and $L^t(\left\langle addM\right\rangle)= \left\langle \{[\Omega^t M_i]\}_{i=1}^m\right\rangle$.
%}
\begin{definition}\label{varphi}
For a given module $M$, let $\varphi\left(M\right)$ be defined as:
$$\varphi(M):=min\{t\ |\ rank\left(L^t\langle add M\rangle\right)=rank\left(L^{t+j}\langle add M\rangle\right)\text{ for }\forall j\geq 1\}.$$
\end{definition}

\begin{remark}\label{phi} Let $\varphi$ be the function defined above. Then,
\begin{enumerate}[label=(\arabic*)]
\item  %Let $M$ be a finitely generated module. Then 
$\varphi(M)$ is finite for each module $M$.
\item Suppose the projective dimension $\pdim(M)<\infty$. Then $\varphi(M)=\pdim(M)$.
\item The function $\varphi$ is not additive, hence it is not enough to compute it on indecomposable modules.
\item Suppose $M$ is indecomposable with $\pdim(M)=\infty$. Then  $\varphi(M)=0$.
\end{enumerate}
\end{remark}

\begin{definition}\label{Lambda} Let $\Lambda$ be an artin algebra. We define:
$$\varphi\dim(\Lambda):=sup\{\varphi(M)\ |\ M \in \text{mod-}\Lambda\}.$$ 
\end{definition}

In order to determine $\varphi\dim(\Lambda)$ for Nakayama algebras we will first state an easy lemma which holds in general and then apply it to the algebras of finite representation type and then to the particular situation of Nakayama algebras.

\begin{lemma} Let $\Lambda$ be an artin algebra. Let $M$ and $N$ be  $\Lambda$-modules. Then:
\begin{enumerate}[label=(\arabic*)]
\item $\varphi (M)\leq \varphi(M\oplus N)$,
\item $\varphi (M^n)= \varphi(M)$ for all $n\geq 1$.
\end{enumerate}
\end{lemma}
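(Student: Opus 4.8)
The two claims are essentially book-keeping about ranks of the subgroups $L^t\langle\operatorname{add} M\rangle$ of $K_0$, and both follow by relating the generating sets of the relevant subgroups.

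For part (2), the plan is to observe directly that $\langle\operatorname{add} M^n\rangle=\langle\operatorname{add} M\rangle$ as subgroups of $K_0$: writing $M=\bigoplus_{i=1}^m M_i^{a_i}$ with the $M_i$ indecomposable and pairwise nonisomorphic, we have $M^n=\bigoplus_{i=1}^m M_i^{na_i}$, so both $\operatorname{add} M$ and $\operatorname{add} M^n$ have exactly $\{[M_1],\dots,[M_m]\}$ as a generating set, and hence generate the same subgroup. Applying the homomorphism $L^t$ gives $L^t\langle\operatorname{add} M^n\rangle=L^t\langle\operatorname{add} M\rangle$ for every $t$, so the defining condition in Definition \ref{varphi} — equality of ranks for all $j\geq 1$ — holds for $M^n$ at exactly the same values of $t$ as for $M$. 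Therefore $\varphi(M^n)=\varphi(M)$.

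For part (1), I would argue monotonicity of the stabilization index under enlarging the generating set. Set $V_t:=L^t\langle\operatorname{add} M\rangle$ and $W_t:=L^t\langle\operatorname{add}(M\oplus N)\rangle$; since $\langle\operatorname{add} M\rangle\subseteq\langle\operatorname{add}(M\oplus N)\rangle$ and $L$ is a group homomorphism, $V_t\subseteq W_t$ for all $t$. Now let $t_0=\varphi(M\oplus N)$, so that $\operatorname{rank} W_{t_0}=\operatorname{rank} W_{t_0+j}$ for all $j\geq 1$; because $W_{t_0}\supseteq W_{t_0+1}\supseteq\cdots$ is a descending chain of free abelian groups (subgroups of the free group $K_0$) which is rank-stable, and in a finitely generated free abelian group a surjection of equal rank between such terms forces the chain to stabilize up to finite index — more precisely $W_{t_0}\otimes\Qq = W_{t_0+j}\otimes\Qq$ inside $K_0\otimes\Qq$. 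Intersecting with the $\Qq$-subspace $V_{t_0}\otimes\Qq\subseteq W_{t_0}\otimes\Qq$, and using $L^j(V_{t_0})=V_{t_0+j}$ together with $L^j(W_{t_0})\otimes\Qq = W_{t_0}\otimes\Qq$, one gets $V_{t_0+j}\otimes\Qq = L^j(V_{t_0}\otimes\Qq)$ has dimension independent of $j$ for $j\geq 0$, i.e. $\operatorname{rank} V_{t_0}=\operatorname{rank} V_{t_0+j}$ for all $j\geq 1$. Hence $\varphi(M)\leq t_0=\varphi(M\oplus N)$.

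The only genuinely delicate point is the linear-algebra step in part (1): one must be careful that rank-stabilization of the larger chain $(W_t)$ at $t_0$ really does force rank-stabilization of the smaller chain $(V_t)$ at $t_0$, rather than merely at some later index. The clean way to see this is to pass to $K_0\otimes\Qq$, where $L$ becomes a $\Qq$-linear endomorphism: then $\operatorname{rank} L^t\langle\operatorname{add} M\rangle = \dim_\Qq L^t(U)$ for $U=\langle\operatorname{add} M\rangle\otimes\Qq$, and $\varphi(M)$ is simply the least $t$ with $\dim L^t(U)=\dim L^{t+1}(U)$, which by the descending nature of the dimensions equals the least $t$ with $L^t|_U$ and all further restrictions having the same image dimension. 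Since $U\subseteq U':=\langle\operatorname{add}(M\oplus N)\rangle\otimes\Qq$, once $L$ restricted to $L^{t_0}(U')$ is injective it is a fortiori injective on the subspace $L^{t_0}(U)$, giving $\dim L^{t_0+j}(U)=\dim L^{t_0}(U)$ for all $j\geq 1$ and thus the claimed inequality. Part (2) needs no such care, being an identity of subgroups.
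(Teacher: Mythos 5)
The paper states this lemma without proof (it is introduced as ``an easy lemma''), so there is no official argument to compare against; I will only assess correctness. Your part (2) is correct: $\operatorname{add}M^n=\operatorname{add}M$, so $\langle \operatorname{add}M^n\rangle=\langle \operatorname{add}M\rangle$ and the two stabilization indices coincide.

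Part (1) reaches the right conclusion by essentially the right mechanism (injectivity of $L$ on the larger image forces injectivity on the smaller), but two of your intermediate claims are false and, as written, the argument does not close. First, $W_{t_0}\supseteq W_{t_0+1}\supseteq\cdots$ is not a descending chain: $W_{t+1}=L(W_t)$ is the \emph{image} of $W_t$ under $L$, not a subgroup of $W_t$ (already for a self-injective algebra $L$ sends the generator $[M]$ to the different generator $[\Omega M]$, so $W_1\neq W_0$ even though the ranks agree); consequently the assertion $W_{t_0}\otimes\Qq=W_{t_0+j}\otimes\Qq$ fails and the ``intersect with $V_{t_0}\otimes\Qq$'' step has no content. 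Second, your ``clean'' reformulation --- that $\varphi(M)$ is the least $t$ with $\dim L^t(U)=\dim L^{t+1}(U)$ --- is also false: the rank sequence can plateau and then drop. Take $M$ indecomposable with $\pdim M=3$; the ranks are $1,1,1,0,0,\dots$, so the first one-step coincidence occurs at $t=0$ while $\varphi(M)=3$. This is precisely why Definition \ref{varphi} quantifies over all $j\geq 1$. The repair is short and uses that quantifier: since $t_0=\varphi(M\oplus N)$, for every $j\geq 0$ we have $\dim_\Qq L^{t_0+j}(U')=\dim_\Qq L^{t_0+j+1}(U')$, so the surjection $L\colon L^{t_0+j}(U')\to L^{t_0+j+1}(U')$ is injective on each $L^{t_0+j}(U')$, hence injective on each subspace $L^{t_0+j}(U)\subseteq L^{t_0+j}(U')$; therefore $\dim_\Qq L^{t_0+j}(U)$ is constant for all $j\geq 0$, which gives $\varphi(M)\leq t_0$. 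With that substitution part (1) is correct.
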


\begin{lemma} \label{M} Let $\Lambda$ be an artin algebra of finite representation type. Let $\{M_1,\dots,M_m\}$ be a complete set of representatives of isomorphism classes of indecomposable $\Lambda$-modules. Let $M^{\dagger}:=\oplus_{i=1}^mM_i$. Then 
$\varphi \dim(\Lambda)=\varphi(M^{\dagger})$.
\end{lemma}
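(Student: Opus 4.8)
The plan is to show two inequalities: $\varphi(M^{\dagger}) \leq \varphi\dim(\Lambda)$ and $\varphi\dim(\Lambda) \leq \varphi(M^{\dagger})$. The first is immediate from the definition of $\varphi\dim$ as a supremum over all modules, since $M^{\dagger}$ is in particular a $\Lambda$-module. So the real content is the reverse inequality: $\varphi(N) \leq \varphi(M^{\dagger})$ for every $\Lambda$-module $N$.

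For the reverse inequality, first note that since $\Lambda$ has finite representation type, every indecomposable module is isomorphic to some $M_i$, so for an arbitrary $N$ we may write $N \cong \bigoplus_{i=1}^m M_i^{\,a_i}$ for nonnegative integers $a_i$. I would like to reduce to the case where $N$ is a direct sum of a subset of the $M_i$'s (each with multiplicity one). By the preceding lemma, part (2), $\varphi$ is unchanged under raising a summand to a positive power; more precisely, $\langle \mathrm{add}\, N\rangle$ depends only on which $M_i$ occur in $N$, not on their multiplicities, since $\langle \mathrm{add}\, N\rangle = \langle \{[M_i] : a_i \geq 1\}\rangle$, and similarly $L^t(\langle \mathrm{add}\, N\rangle) = \langle \{[\Omega^t M_i] : a_i \geq 1\}\rangle$. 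Hence $\varphi(N) = \varphi(\bigoplus_{a_i \geq 1} M_i)$, a direct sum of a sub-collection of the $M_i$'s. Then part (1) of the lemma gives $\varphi(\bigoplus_{a_i \geq 1} M_i) \leq \varphi(\bigoplus_{i=1}^m M_i) = \varphi(M^{\dagger})$, because $M^{\dagger}$ is obtained from $\bigoplus_{a_i \geq 1} M_i$ by adding the remaining summands. Combining, $\varphi(N) \leq \varphi(M^{\dagger})$ for all $N$, so $\varphi\dim(\Lambda) \leq \varphi(M^{\dagger})$, and together with the trivial inequality we conclude $\varphi\dim(\Lambda) = \varphi(M^{\dagger})$.

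There is no genuine obstacle here; the only point requiring a little care is the observation that $\langle \mathrm{add}\, N\rangle$, and all of its images under powers of $L$, are determined by the \emph{set} of indecomposable summands of $N$ rather than their multiplicities — this is exactly the content recorded in the displayed formula $L^t(\langle \mathrm{add}\, M\rangle) = \langle \{[\Omega^t M_i]\}_{i=1}^m\rangle$ just before Definition \ref{varphi}, so it can be invoked directly. Everything else is a formal consequence of the two parts of the preceding lemma and the finiteness of the representation type, which guarantees that $M^{\dagger}$ is a well-defined finitely generated module dominating every $N$ in the sense needed.
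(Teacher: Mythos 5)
Your proof is correct and follows essentially the same route as the paper: both arguments reduce to the two parts of the preceding lemma, the only cosmetic difference being that the paper realizes an arbitrary $X$ as a summand of $(M^{\dagger})^k$ and then uses $\varphi((M^{\dagger})^k)=\varphi(M^{\dagger})$, whereas you first strip multiplicities from $N$ and then enlarge to $M^{\dagger}$. Nothing further is needed.
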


\begin{proof} Let $X$ be in mod-$\Lambda$. Then $X\cong\oplus_{i=1}^mM_i^{n_i}$. Therefore $X$ is isomorphic to a summand of $(M^{\dagger})^k$ for a sufficiently large $k$. Then by the above lemma 
$\varphi (X)\leq \varphi ((M^{\dagger})^{k})=\varphi (M^{\dagger})$. Hence $\varphi \dim(\Lambda) \leq \varphi(M^{\dagger})$ and therefore $\varphi(M^{\dagger})= \varphi \dim(\Lambda)$.
\end{proof}
%}
%{\color{green!70!black}
\subsection{Nakayama algebras and their $\varphi$-dimension}
In order to study $\varphi\dim (\Lambda)$ for Nakayama algebras the following notion will be useful: Let $X$ be a $\Lambda$-module. Let: 
$$\alpha(X)=\#\{\text{pairwise non-isomorphic, non-projective, indecomposable summands of } X\}.$$ 

\begin{lemma}\label{alpha} Let $\Lambda$ be a Nakayama algebra. Let $X=\oplus X_i$ be a $\Lambda$-module. Then:
\begin{enumerate}[label=(\roman*)]
\item $\alpha(\Omega X)\leq\alpha(X)$
\item $\alpha(\Omega X)=\alpha(X)$ if and only if (1) $\Omega X_i$ is non-projective for all non-projective summands $X_i$ of $X$ and (2) $\Omega X_i\not\cong \Omega X_j$ for $X_i\not\cong X_j$.
\end{enumerate}
\end{lemma}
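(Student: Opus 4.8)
The plan is to analyze the syzygy operation $\Omega$ on the indecomposable summands of $X$ and to isolate exactly when the count $\alpha$ can drop, working module-by-module since $\Omega$ commutes with direct sums. First I would discard projective summands: they contribute nothing to $\alpha(X)$, and if $X_i$ is projective then $\Omega X_i = 0$, so it contributes nothing to $\alpha(\Omega X)$ either. Hence without loss of generality I may assume every $X_i$ is non-projective, and I write $X = \bigoplus_{i=1}^{s} X_i$ with the $X_i$ pairwise non-isomorphic (collapsing multiplicities, which changes neither $\alpha(X)$ nor $\alpha(\Omega X)$), so that $\alpha(X) = s$. Now $\Omega X = \bigoplus_{i=1}^s \Omega X_i$, and since $\Lambda$ is Nakayama each nonzero $\Omega X_i$ is indecomposable. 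Therefore $\alpha(\Omega X)$ equals the number of distinct non-projective modules appearing among $\Omega X_1,\dots,\Omega X_s$.

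For part (i), I would observe that the map $i \mapsto [\Omega X_i]$ sends the index set $\{1,\dots,s\}$ into the set of isomorphism classes of modules that are either zero or indecomposable; the image of this map, after removing the zero class and any projective classes, is precisely the set counted by $\alpha(\Omega X)$. Since a surjection onto a set of size $\alpha(\Omega X)$ from a subset of a set of size $s$ shows $\alpha(\Omega X) \le s = \alpha(X)$, the inequality follows. More concretely: each non-projective indecomposable summand of $\Omega X$ is $\Omega X_i$ for at least one $i$, giving a (not necessarily injective) correspondence from summands of $\Omega X$ back to summands of $X$.

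For part (ii), equality $\alpha(\Omega X) = \alpha(X) = s$ holds if and only if the passage from $\{X_i\}$ to $\{\Omega X_i\}$ loses no elements, which breaks into exactly the two stated failure modes being excluded: (1) no $\Omega X_i$ is zero or projective — if some $\Omega X_i$ were zero then $s$ summands map to at most $s-1$ nonzero ones, and if some $\Omega X_i$ were a nonzero projective it would be discarded from the $\alpha$-count, in either case forcing a strict drop (here I use that a projective syzygy still decreases the count since projectives are not counted); and (2) the assignment $i \mapsto [\Omega X_i]$ is injective on the non-projective summands, i.e. $\Omega X_i \not\cong \Omega X_j$ whenever $X_i \not\cong X_j$ — a coincidence $\Omega X_i \cong \Omega X_j$ with $i \ne j$ again collapses two classes into one. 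Conversely, if both (1) and (2) hold, then $i \mapsto [\Omega X_i]$ is an injection from an $s$-element set onto non-projective indecomposable summands of $\Omega X$, so $\alpha(\Omega X) = s$. I would phrase the two directions symmetrically to make the ``if and only if'' transparent.

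I do not expect a serious obstacle here; this is essentially a counting argument built on two structural facts already available in the excerpt, namely that over a Nakayama algebra all indecomposables are uniserial and all nonzero syzygies of indecomposables are indecomposable. The one point requiring a little care is the bookkeeping around \emph{projective} syzygies in part (ii): condition (1) as stated says $\Omega X_i$ is non-projective (not merely nonzero), and I would make sure the proof explicitly notes that a nonzero projective syzygy still causes a strict decrease in $\alpha$ precisely because projective summands are excluded from the count, so that the two conditions (1) and (2) are genuinely the complete list of obstructions to equality.
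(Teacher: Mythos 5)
Your proposal is correct and rests on exactly the observation the paper uses — that over a Nakayama algebra the syzygy of an indecomposable module is either zero or indecomposable — the paper simply states this one fact as its entire proof and leaves the counting bookkeeping implicit, whereas you spell it out. No substantive difference in approach.
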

\begin{proof}Notice that $\Omega(Y)$ is either $0$ or indecomposable for all indecomposable modules $Y$.
\end{proof}
\begin{lemma} \label{phiNakayama} Let $\Lambda$ be a Nakayama algebra.  Let $\{M_1,\dots,M_m\}$ be a complete set of representatives of isomorphism classes of indecomposable $\Lambda$-modules and let \\
$M^{\dagger}:=\oplus_{i=1}^mM_i$. Then:
$$\varphi\dim(\Lambda)=min\{t\ |\ \alpha(\Omega^t(M^{\dagger}))=\alpha(\Omega^{t+j}(M^{\dagger})), \  \forall j\geq 0\}.$$
\end{lemma}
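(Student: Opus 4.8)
The idea is to translate the rank condition in Definition~\ref{varphi} applied to $M=M^{\dagger}$ into the combinatorial quantity $\alpha(\Omega^t(M^{\dagger}))$. First I would recall that by Lemma~\ref{M} we have $\varphi\dim(\Lambda)=\varphi(M^{\dagger})$, so it suffices to prove that $\varphi(M^{\dagger})$ equals $\min\{t\mid \alpha(\Omega^t(M^{\dagger}))=\alpha(\Omega^{t+j}(M^{\dagger}))\ \forall j\geq 0\}$. The key observation is that $\langle \mathrm{add}\,M^{\dagger}\rangle = K_0$, the whole free abelian group on isomorphism classes of indecomposable nonprojective modules, since $M^{\dagger}$ is the sum of \emph{all} indecomposables. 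Hence $L^t\langle \mathrm{add}\,M^{\dagger}\rangle$ is the subgroup of $K_0$ generated by $\{[\Omega^t M_i]\}_{i=1}^m$, and its rank is precisely the number of distinct nonzero classes among the $[\Omega^t M_i]$. Since $\Lambda$ is Nakayama, each $\Omega^t M_i$ is either $0$ or indecomposable (hence gives a basis element of $K_0$ when nonprojective, or $0$ when projective), so this rank is exactly $\alpha(\Omega^t(M^{\dagger}))$.

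**Key steps in order.** (1) Identify $\langle \mathrm{add}\,M^{\dagger}\rangle=K_0$ and therefore $L^t\langle\mathrm{add}\,M^{\dagger}\rangle=\langle\{[\Omega^t M_i]\}\rangle$. (2) Show $\mathrm{rank}\,L^t\langle\mathrm{add}\,M^{\dagger}\rangle=\alpha(\Omega^t(M^{\dagger}))$, using that distinct nonprojective indecomposables are linearly independent in $K_0$ while projective summands contribute $0$; thus the rank counts pairwise non-isomorphic non-projective indecomposable summands of $\Omega^t(M^{\dagger})$, which is the definition of $\alpha$. (3) Invoke Lemma~\ref{alpha}(i), which gives that $t\mapsto\alpha(\Omega^t(M^{\dagger}))$ is non-increasing; combined with the fact that it is a sequence of non-negative integers, it is eventually constant, and once $\alpha(\Omega^t(M^{\dagger}))=\alpha(\Omega^{t+1}(M^{\dagger}))$ it stabilizes for all larger indices (this monotone-stabilization point is where the ``$\forall j\geq 0$'' in the statement can be replaced by ``$j=1$'', and it matches the ``$\forall j\geq 1$'' in Definition~\ref{varphi}). (4) Conclude that $\varphi(M^{\dagger})=\min\{t\mid\mathrm{rank}\,L^t=\mathrm{rank}\,L^{t+j}\ \forall j\geq 1\}=\min\{t\mid\alpha(\Omega^t(M^{\dagger}))=\alpha(\Omega^{t+j}(M^{\dagger}))\ \forall j\geq 0\}$, and chain this with Lemma~\ref{M}.

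**Main obstacle.** The only genuinely delicate point is step~(2): one must be careful that the rank of the subgroup generated by $\{[\Omega^t M_i]\}$ equals the \emph{number of distinct nonzero classes}, not something smaller. This works because in $K_0$ the nonzero classes $[\Omega^t M_i]$ are each individual free generators (each $\Omega^t M_i$ being indecomposable nonprojective or zero), so a set of them is automatically linearly independent; there is no cancellation. I would state this explicitly. A secondary, purely bookkeeping, subtlety is reconciling the index ranges ``$j\geq 1$'' versus ``$j\geq 0$'': since the sequence $\alpha(\Omega^t(M^{\dagger}))$ is monotone non-increasing by Lemma~\ref{alpha}(i), equality with the next term already forces equality with all later terms, so the two minima coincide; I would note this in one sentence rather than belabor it. Everything else is formal manipulation of the definitions already in place.
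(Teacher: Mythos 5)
Your proposal is correct and follows essentially the same route as the paper: reduce to $\varphi(M^{\dagger})$ via Lemma~\ref{M}, identify $L^t\langle \mathrm{add}\,M^{\dagger}\rangle$ with the subgroup generated by the classes $[\Omega^t M_i]$, and use the fact that over a Nakayama algebra syzygies of indecomposables are indecomposable to equate the rank with $\alpha(\Omega^t(M^{\dagger}))$. Your extra remarks on linear independence of the nonzero classes in $K_0$ and on the harmless discrepancy between $j\geq 0$ and $j\geq 1$ (the $j=0$ case being a tautology) only make explicit what the paper leaves implicit.
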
 

\begin{proof} By Lemma \ref{M} and Definition \ref{varphi} it follows that $\varphi\dim(\Lambda)= \varphi(M^{\dagger})=$\\
$=min\{t\ |\ rank\left(L^t\langle add M^{\dagger}\rangle\right)=rank\left(L^{t+j}\langle add M^{\dagger}\rangle\right), \  \forall j\geq 1\}$=\\
$=min\{t\ |\ rank \left\langle \{[\Omega^t M_i]\}_{i=1}^m\right\rangle =rank \left\langle \{[\Omega^{t+j} M_i]\}_{i=1}^m\right\rangle, \  \forall j\geq 1\}$=\\
$=min\{t\ |\ \alpha(\Omega^t(M))=\alpha(\Omega^{t+j}(M)),\  \forall j\geq 1\}$. The last equality follows since $\Lambda$ is Nakayama and the syzygies of indecomposable modules are always indecomposable.
\end{proof}
\begin{remark}\label{fidimfindim} Recall that $\findim \Lambda = sup\{\pdim (X)\ |\ \pdim (X)< \infty\}$. Then $\findim \Lambda\leq \varphi\dim\Lambda$.
\end{remark}
%}
%For some of the arguments it will be convenient to look at subsets of indecomposable modules, so now we introduce the following sets of modules and give equivalent definition of the $\varphi\dim$ for Nakayama algebras.

%{\color{orange!70!black}
%We start similarly as in Lemma \ref{phiNakayama} by letting  $\mathcal A:=\{0, M_1,\dots,M_m\}$ where  $\{M_1,\dots,M_m\}$ is a complete set of representatives of isomorphism classes of indecomposable $\Lambda$-modules. Since $\Lambda$ is Nakayama, syzygy of any indecomposable module is either $0$ or an indecomposable module, hence isomorphic to one of the modules in the set  $\mathcal A$. Using such an isomorphism, we identify each syzygy of an indecomposable module by the corresponding module in  $\mathcal A$. Using these identifications we define the following syzygy sets as subsets of  $\mathcal A$: $${\bf\Omega^i}:=\{\Omega^i(A)\ |\ A\in \mathcal A\}\subset \mathcal A.$$With this notation we have the following set inclusions:$$\dots{\bf\Omega^{i+1}}\subset{\bf\Omega^i}\subset\dots\subset{\bf\Omega^2}\subset{\bf\Omega^1} \subset{\bf\Omega^0}=\mathcal A.$$}

%%%%%%%%%%%%%%%%%%%%%%%%%%\commentout{
%\commentout{
\begin{definition}\label{folklor4} A module $X$ is called $\Omega$-periodic if there exists  an integer $k\geq 1$ such that $\Omega^{k}(X)\cong X$. After relabeling the modules $\{M_1,\dots,M_m\}$ let $\{M_1,\dots,M_a\}$ be the subset  which consists of all  $\Omega$-periodic modules. Let  $${\bf\Omega}^{per}:=\{M_1,\dots,M_a\}\ \  and \ \ M^{per}:=\oplus_{i=1}^aM_i.$$
\end{definition}

\begin{lemma} \label{periodic} If $X$ is $\Omega$-periodic then $\Omega^i(X)$ is also $\Omega$-periodic for all $i\geq 1$.
\end{lemma}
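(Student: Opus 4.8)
The plan is to simply transport the periodicity relation through the syzygy operator, using that $\Omega$ is a well-defined operation on isomorphism classes of modules which commutes with itself. Suppose $X$ is $\Omega$-periodic, so there is an integer $k\geq 1$ with $\Omega^{k}(X)\cong X$. Fix $i\geq 1$. Applying $\Omega^{i}$ to an isomorphism yields an isomorphism (projective covers are unique up to isomorphism, so $\Omega$ sends isomorphic modules to isomorphic modules), hence $\Omega^{i}\bigl(\Omega^{k}(X)\bigr)\cong \Omega^{i}(X)$. On the other hand, $\Omega^{i}\bigl(\Omega^{k}(X)\bigr)=\Omega^{i+k}(X)=\Omega^{k}\bigl(\Omega^{i}(X)\bigr)$. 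Combining these, $\Omega^{k}\bigl(\Omega^{i}(X)\bigr)\cong \Omega^{i}(X)$, which is exactly the statement that $\Omega^{i}(X)$ is $\Omega$-periodic (with the same period $k$).

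There is essentially no obstacle here; the only point worth stating explicitly is that $\Omega^{a}\circ\Omega^{b}=\Omega^{a+b}=\Omega^{b}\circ\Omega^{a}$ on isomorphism classes, which is immediate from the definition of higher syzygies as iterated kernels of projective covers. One could optionally remark that if $X$ is indecomposable and nonprojective then so is each $\Omega^{i}(X)$ (as noted earlier in the excerpt, syzygies of indecomposables over a Nakayama algebra are indecomposable), so the periodicity passes to genuine modules and not merely to the zero module; but this is not needed for the statement as phrased.
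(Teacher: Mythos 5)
Your proof is correct and is essentially the same as the paper's one-line argument: both apply $\Omega^i$ to the isomorphism $\Omega^k(X)\cong X$ and use commutativity of syzygy powers to conclude $\Omega^k(\Omega^i(X))\cong\Omega^i(X)$. The extra remarks you include (that $\Omega$ respects isomorphism, and the optional note on indecomposability) are harmless elaborations of the same idea.
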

\begin{proof} Suppose $\Omega^k(X)\cong X$. Then $\Omega^{k}(\Omega^i(X))=\Omega^{i}(\Omega^{k}(X))\cong \Omega^{i}(X)$. 
\end{proof}

\begin{lemma}\label{hakem}  Let $\Lambda$ be Nakayama algebra and let ${\bf\Omega}^{per}=\{M_1,\dots,M_a\}$. Then the syzygy operation defines a bijection $\omega: {\bf\Omega}^{per}=\{M_1,\dots,M_a\}\to{\bf\Omega}^{per}=\{M_1,\dots,M_a\}$.
\end{lemma}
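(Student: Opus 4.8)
The plan is to show that $\omega := \Omega|_{{\bf\Omega}^{per}}$ is a well-defined self-map of the finite set ${\bf\Omega}^{per}$, and then deduce bijectivity from injectivity (equivalently surjectivity) on a finite set. Well-definedness is the combination of two facts already available: by Lemma \ref{periodic}, if $M_i$ is $\Omega$-periodic then so is $\Omega(M_i)$, so $\Omega(M_i)$ lies again in ${\bf\Omega}^{per}$ (once we know it is nonzero and indecomposable); and since $\Lambda$ is Nakayama, the nonzero syzygy of an indecomposable module is indecomposable, so $\Omega(M_i)$ is one of the $M_1,\dots,M_a$ up to isomorphism. We must also check $\Omega(M_i)\neq 0$: if $\Omega(M_i)=0$ then $M_i$ is projective and $\Omega^k(M_i)=0\not\cong M_i$, contradicting periodicity (here we use $M_i\neq 0$, which holds since the $M_i$ are representatives of isomorphism classes of nonzero indecomposables). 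So $\omega$ is a genuine map from the finite set ${\bf\Omega}^{per}$ to itself.

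Next I would prove $\omega$ is injective. Suppose $\omega(M_i)\cong\omega(M_j)$, i.e. $\Omega(M_i)\cong\Omega(M_j)$. Pick $k$ with $\Omega^k(M_i)\cong M_i$ and $\ell$ with $\Omega^{\ell}(M_j)\cong M_j$; then $\Omega^{k\ell}$ fixes both $M_i$ and $M_j$ (iterate and use that $\Omega$ commutes with itself). Applying $\Omega^{k\ell-1}$ to $\Omega(M_i)\cong\Omega(M_j)$ gives $\Omega^{k\ell}(M_i)\cong\Omega^{k\ell}(M_j)$, that is $M_i\cong M_j$. Hence $\omega$ is injective, and being an injective self-map of a finite set it is a bijection.

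Alternatively, and perhaps more transparently, one can argue surjectivity directly: given $M_j\in{\bf\Omega}^{per}$, choose $k\geq 1$ with $\Omega^{k}(M_j)\cong M_j$; then $\Omega^{k-1}(M_j)$ is $\Omega$-periodic by Lemma \ref{periodic}, nonzero (it has $\Omega$ of it isomorphic to... wait, rather: if it were zero then $M_j\cong\Omega^k(M_j)=\Omega(\Omega^{k-1}(M_j))=0$, impossible) and indecomposable (Nakayama), so it equals some $M_s\in{\bf\Omega}^{per}$ with $\omega(M_s)=\Omega(M_s)\cong\Omega^{k}(M_j)\cong M_j$. Combined with injectivity — or simply invoking the pigeonhole principle for a self-map of a finite set once either injectivity or surjectivity is known — this yields the bijection.

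I do not expect a genuine obstacle here; the statement is essentially bookkeeping. The only points that need care are (i) not forgetting to rule out $\Omega(M_i)=0$, which would break well-definedness, and (ii) using the Nakayama hypothesis precisely where it is needed, namely to guarantee that nonzero syzygies of indecomposables are indecomposable so that $\omega$ lands inside the list $\{M_1,\dots,M_a\}$ rather than producing a decomposable module. Everything else is a formal consequence of $\Omega$ being an (iso-respecting) operation that commutes with its own powers, together with finiteness of ${\bf\Omega}^{per}$.
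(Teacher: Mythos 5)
Your proposal is correct and essentially matches the paper's proof: the paper establishes well-definedness via Lemma \ref{periodic} and then proves surjectivity exactly as in your ``alternative'' paragraph (writing $M_j\cong\Omega(\Omega^{k-1}(M_j))$ and noting $\Omega^{k-1}(M_j)$ is periodic), concluding by finiteness. Your primary route via injectivity is an equivalent minor variant, and your explicit check that $\Omega(M_i)\neq 0$ is a small point the paper leaves implicit.
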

%After relabeling, let $\{M_1,\dots,M_a\}\subset \{M_1,\dots,M_m\}$ be a subset  of modules which are $\Omega$-periodic, i.e. there exist integers $m_i\geq 1$ such that $\Omega^{m_i}(M_i)\cong M_i$ for all $i\in\{1,\dots,a\}$. Then $\{\Omega(M_1),\dots,\Omega(M_a)\}=\{M_{\sigma(1)},\dots,M_{\sigma(a)}\}$ for some permutation $\sigma$ (up to isomorphism).
\begin{proof} 
By the above lemma, $\Omega(M_j)$ is again an $\Omega$-periodic module, hence is isomorphic to one of the elements in ${\bf\Omega}^{per}=\{M_1,\dots,M_a\}$. Define $\omega(M_j)$ to be that element of ${\bf\Omega}^{per}$, hence 
$\omega(M_j)=M_{j'}$ for some $j'\in\{1,\dots,a\}$. To show that this is a bijection, it is enough to show that $\omega$ is surjective since the set is finite.
Let $M_p$ be an element in ${\bf\Omega}^{per}$ and let $\Omega^{k_p}(M_p)\cong M_p$. Then 
$\Omega(\Omega^{k_p-1}(M_p))\cong M_p$.  By Lemma \ref{periodic} the module $\Omega^{k_p-1}(M_p)$ is periodic and therefore isomorphic to some $M_q\in {\bf\Omega}^{per}$.  Therefore $\omega(M_q)=M_p$.
%Since $\Omega^{m_k}(\Omega^i(M_k))=\Omega^{i}(\Omega^{m_k}(M_k))\cong \Omega^{i}(M_k)$ it follows that $\{\Omega^i(M_k)\}$ are all periodic. In particular $\Omega$ defines a map $\Omega: \{M_1,\dots,M_a\}\to\{M_1,\dots,M_a\}$ (up to isomorphism). Notice that $\Omega$ is a surjective map since $\Omega(\Omega^{m_k-1}(M_k)) \cong M_k$ for all $k\in \{1,\dots,a\}$ and therefore $\Omega$ is a bijection.
\end{proof}

\begin{proposition} \label{nonperiodic} Let $\Lambda$ be a Nakayama algebra. Let $k\geq \findim \Lambda$. Suppose $\Omega^k(M)=M^{per}\oplus N$ with $N\neq 0$. Then $\alpha (\Omega^k(M))>\alpha (\Omega^{\varphi\dim\Lambda}(M))$ and $\varphi\dim\Lambda\geq k$.
\end{proposition}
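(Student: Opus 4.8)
The plan is to prove the two assertions in tandem, exploiting the fact that $M^{per}$ consists precisely of the $\Omega$-periodic indecomposables on which $\Omega$ acts as the bijection $\omega$ from Lemma \ref{hakem}. First I would observe that since $k\geq\findim\Lambda$, no summand of $\Omega^k(M)$ can have finite projective dimension: if some summand $Y$ of $\Omega^k(M)$ were nonzero with $\pdim Y<\infty$, then $\pdim(\Omega^k(M))$ would be finite, forcing $\pdim M\leq k+\findim\Lambda<\infty$ — wait, more carefully, one argues that a summand of $\Omega^k(M)$ of finite projective dimension would itself realize a finite projective dimension exceeding bounds, so in fact every nonzero summand of $\Omega^k(M)$ is of infinite projective dimension, hence (by Lemma \ref{folklore2} applied to its further syzygies, or directly) eventually $\Omega$-periodic. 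Since $N$ is assumed nonzero, write $N=\bigoplus N_i$ with each $N_i$ indecomposable nonprojective of infinite projective dimension and, by relabeling, not already $\Omega$-periodic (any periodic summand we absorb into the $M^{per}$ part).

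Next I would track the behavior of $\alpha$ along syzygies. By Lemma \ref{alpha}(i), $\alpha(\Omega^{k+j}(M))$ is non-increasing in $j$. The point is that it must \emph{strictly} drop at some stage: since each $N_i$ is not periodic but has infinite projective dimension, by Lemma \ref{folklore2} there is a smallest $\ell_i\geq 1$ with $\Omega^{\ell_i}(N_i)$ periodic, and for $j$ large enough all the $\Omega^{k+j}$-images of the $N_i$ have become periodic and hence land in the $\omega$-orbit structure; since before that the non-periodic summands genuinely contribute to $\alpha$ and the periodic part $M^{per}$ already accounts for the full periodic "rank" (here one uses that $\omega$ is a bijection of ${\bf\Omega}^{per}$, so $\alpha$ restricted to the periodic part is constant and equal to $a$), the total count $\alpha(\Omega^{k+j}(M))$ is strictly larger than $a$ for small $j$ and equals $a$ — more precisely equals $\alpha(\Omega^{\varphi\dim\Lambda}(M))$ — eventually. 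Thus $\alpha(\Omega^k(M))>\alpha(\Omega^{\varphi\dim\Lambda}(M))$, and moreover the strict inequality of $\alpha$ at level $k$ versus its eventual stable value means, by the characterization of $\varphi\dim\Lambda$ in Lemma \ref{phiNakayama} as the first level at which $\alpha(\Omega^t(M^{\dagger}))$ stabilizes, that $k$ is strictly below that stabilization point, i.e. $\varphi\dim\Lambda\geq k$.

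For the last implication I would be careful to pass from the single module $M$ to $M^{\dagger}$: if $\alpha(\Omega^k(M))>\alpha(\Omega^{k+j}(M))$ for some $j$, then since $M$ is (isomorphic to a summand of a power of) $M^{\dagger}$ and $\alpha$ is monotone and compatible with the decomposition, the corresponding strict drop happens for $M^{\dagger}$ at or after level $k$ as well; hence the stabilization threshold in Lemma \ref{phiNakayama} is $\geq k$, giving $\varphi\dim\Lambda\geq k$. Alternatively, and perhaps more cleanly, I would invoke $\varphi(M)\leq\varphi\dim\Lambda$ together with a direct argument that $\varphi(M)\geq k$: the rank of $L^t\langle\mathrm{add}\,M\rangle$ is exactly $\alpha(\Omega^t(M))$ for Nakayama algebras, and we have just shown this is not yet stable at $t=k$.

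\textbf{Main obstacle.} The delicate point is establishing that once the non-periodic summands $N_i$ all become periodic under iterated $\Omega$, they do not \emph{increase} the periodic count beyond what $M^{per}$ already contributes — equivalently, that the eventual stable value of $\alpha(\Omega^{k+j}(M))$ genuinely equals $\alpha(\Omega^{\varphi\dim\Lambda}(M))$ and is strictly less than $\alpha(\Omega^k(M))$. This requires knowing that $\Omega^j(M^{per})$ already "saturates" the periodic indecomposables reachable from $M$ (via the bijectivity of $\omega$), so that the extra summands coming from the $N_i$ are redundant in the $\alpha$-count at large $j$ but were genuinely new at level $k$; handling the bookkeeping of which periodic orbit each $\Omega^{\ell_i}(N_i)$ falls into, without double-counting, is where the real work lies. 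I expect this to hinge on Lemma \ref{alpha}(ii) — the strict drop in $\alpha$ occurs exactly when two summands' syzygies collide or a summand's syzygy dies — combined with the fact that $N\neq 0$ forces at least one such collision before stabilization.
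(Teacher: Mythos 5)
The paper states Proposition \ref{nonperiodic} without any proof at all, so there is nothing to compare against; your argument is, as far as I can tell, exactly the justification the author intends, built from the same ingredients the paper sets up (Lemma \ref{alpha}, Lemma \ref{folklore2}, Lemma \ref{hakem}, Lemma \ref{phiNakayama}), and it is correct in substance. Two remarks. First, the step you flag as the ``main obstacle'' is not actually delicate: $M^{per}$ is by definition the sum of \emph{all} $\Omega$-periodic indecomposables, and since $\omega$ is a bijection of ${\bf\Omega}^{per}$, every periodic indecomposable already occurs as a summand of $\Omega^j(M^{per})$, hence of $\Omega^j(M^{\dagger})$, for every $j$; so any periodic module that $\Omega^j(N_i)$ eventually lands on is isomorphic to a summand already counted, and the stable value of $\alpha$ is exactly $a=\alpha(M^{per})$ with no bookkeeping of orbits required. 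Consequently $\alpha(\Omega^k(M))=a+\alpha(N)>a=\alpha(\Omega^{\varphi\dim\Lambda}(M))$, and since $\alpha$ has not yet stabilized at level $k$, monotonicity gives the strict inequality $\varphi\dim\Lambda>k$, which is stronger than the stated $\varphi\dim\Lambda\geq k$. Second, your claim that every nonzero summand of $\Omega^k(M)$ has infinite projective dimension needs the small correction that such a summand could also be projective (e.g.\ $\Omega^{\pdim X}(X)$ when $\pdim X=\findim\Lambda=k$, under the paper's convention); this is harmless for the $\alpha$-count, but it is the one place where the hypothesis $N\neq 0$ must really be read as ``$N$ contributes to $\alpha$,'' i.e.\ $N$ has a nonprojective, nonperiodic indecomposable summand, since a purely projective $N$ would make the claimed strict inequality fail.
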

%%%%%%%%%%%%%%%%%%%%%end of  \commentout
\subsection{Periodic syzygies for Nakayama algebras}
 
Since syzygies become "stable" at $\varphi\dim(\Lambda)$,  we need to describe the module $\Omega^{\varphi\dim(\Lambda)}(M^{\dagger})$ where $M^{\dagger}=\oplus_{i=1}^mM_i$.

\begin{proposition}Let $\Lambda$ be Nakayama algebra. Then $\Omega^{\varphi\dim(\Lambda^{\dagger})}(M)\cong M^{per}$ where $M^{per}$ is the direct sum of the complete set of representatives of isomorphism classes of $\Omega$-periodic modules.
\end{proposition}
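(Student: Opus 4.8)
The plan is to combine the stabilization of syzygies guaranteed by Lemma \ref{phiNakayama} with the bijectivity result of Lemma \ref{hakem}. Set $t_0 := \varphi\dim(\Lambda)$. By Lemma \ref{phiNakayama} we have $\alpha(\Omega^{t_0}(M^{\dagger})) = \alpha(\Omega^{t_0+j}(M^{\dagger}))$ for all $j\geq 0$; in other words, the number of pairwise non-isomorphic non-projective indecomposable summands of $\Omega^{t_0}(M^{\dagger})$ never drops under further application of $\Omega$. First I would invoke Lemma \ref{alpha}(ii): equality $\alpha(\Omega X)=\alpha(X)$ forces every non-projective summand $X_i$ of $X$ to have $\Omega X_i$ non-projective and to have $\Omega X_i \not\cong \Omega X_j$ whenever $X_i\not\cong X_j$. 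Applying this with $X = \Omega^{t_0+j}(M^{\dagger})$ for all $j\geq 0$, we conclude that $\Omega$ is injective on the set of (iso-classes of) non-projective indecomposable summands appearing in $\Omega^{t_0}(M^{\dagger})$, and that it never produces a projective; hence that finite set of iso-classes is mapped injectively, therefore bijectively, into itself by $\Omega$.

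Next I would show that every indecomposable summand $Y$ of $\Omega^{t_0}(M^{\dagger})$ is $\Omega$-periodic. Since $\Omega$ permutes the finite set of iso-classes of summands of $\Omega^{t_0}(M^{\dagger})$, iterating gives $\Omega^{k}(Y)\cong Y$ for some $k\geq 1$, which is exactly the definition (Definition \ref{folklor4}) of $\Omega$-periodicity. Thus $\Omega^{t_0}(M^{\dagger})$ is a direct sum of $\Omega$-periodic indecomposable modules, so as an element of $K_0$ (equivalently, up to projective summands and multiplicities) it is supported on $\mathbf\Omega^{per}$. To upgrade this to the stated isomorphism $\Omega^{t_0}(M^{\dagger})\cong M^{per}$, I would argue that \emph{every} module in $\mathbf\Omega^{per}$ occurs as a summand, and with multiplicity one. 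Multiplicity one follows because $\Omega$ is injective on summands of $M^{\dagger}$ from stage $t_0$ on (Lemma \ref{alpha}(ii) again): if some periodic $M_i$ appeared with multiplicity $\geq 2$ in $\Omega^{t_0}(M^{\dagger})$, tracking it back is not needed — rather, the point is that $\alpha$ counts iso-classes, and one shows the full syzygy $\Omega^{t_0}(M^{\dagger})$ (not just its distinct-summand count) stabilizes; here I would use that the total length/composition of syzygies of $M^{\dagger}$ eventually becomes constant once all summands are periodic, forcing multiplicity one. For surjectivity onto $\mathbf\Omega^{per}$: given $M_p\in\mathbf\Omega^{per}$ with $\Omega^{k_p}(M_p)\cong M_p$, the module $M_p$ is itself an indecomposable summand of $M^{\dagger}$, so $\Omega^{t_0}(M_p)$ is a summand of $\Omega^{t_0}(M^{\dagger})$; since $M_p$ is periodic, $\Omega^{t_0}(M_p)$ is again periodic and, chasing the periodic cycle through $M_p$ (using Lemma \ref{hakem} that $\omega$ is a bijection on $\mathbf\Omega^{per}$), one sees $M_p$ itself lies in the $\Omega$-orbit, hence is a summand of $\Omega^{t_0}(M^{\dagger})$.

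The main obstacle I anticipate is the passage from the \emph{numerical} stabilization of $\alpha$ to an actual \emph{isomorphism} of modules with the correct multiplicities — i.e. ruling out that a periodic module shows up with multiplicity greater than one, or that a non-periodic "transient" summand lingers. The clean way around this is Proposition \ref{nonperiodic}: if $\Omega^{t_0}(M^{\dagger}) = M^{per}\oplus N$ with $N\neq 0$, then since $t_0\geq \findim\Lambda$ (which holds by Remark \ref{fidimfindim}), that proposition yields $\alpha(\Omega^{t_0}(M^{\dagger})) > \alpha(\Omega^{t_0}(M^{\dagger}))$, a contradiction; so $N=0$ and $\Omega^{t_0}(M^{\dagger})\cong M^{per}$ exactly. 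I would structure the final write-up around this single application of Proposition \ref{nonperiodic}, using the earlier paragraphs only to establish $t_0\geq\findim\Lambda$ and that whatever survives is periodic, so that the decomposition $\Omega^{t_0}(M^{\dagger}) = M^{per}\oplus N$ is the right dichotomy to feed into it.
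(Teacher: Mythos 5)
Your proposal is correct and, once you strip away the preliminary permutation discussion, reduces to exactly the paper's argument: apply Proposition \ref{nonperiodic} at $t_0=\varphi\dim\Lambda$ (legitimate since $t_0\geq\findim\Lambda$ by Remark \ref{fidimfindim}) to force $N=0$ in $\Omega^{t_0}(M^{\dagger})=M^{per}\oplus N$, and observe via Lemma \ref{hakem} that $M^{per}$ persists as a summand of every $\Omega^{t}(M^{\dagger})$. The paper's proof is just these two sentences, so no substantive difference.
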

\begin{proof} It follows from Proposition \ref{nonperiodic} that if $\Omega^t(M^{\dagger})$ has a summand which is not $\Omega$-periodic, then $\varphi\dim\Lambda>t$. To see that all representatives of $\Omega$-periodic modules are summands of $\Omega^{\varphi\dim(\Lambda)}$, notice that $M^{per}$ is direct summand of $M$ and of every $\Omega^t(M^{\dagger})$ for all $t\geq 0$.
\end{proof}

\begin{definition}\label{mydefinition} let $\Lambda$ be Nakayama algebra. We define:
$\cP^{per}$ is the complete set of representatives of isomorphism classes of projective covers of modules in ${\bf\Omega}^{per}$.
\end{definition}

\begin{remark} All projective modules in $\mathcal P^{per}$ have filtration by the modules from ${\bf\Delta}$, i.e. are
${\bf\Delta}$-modules since the tops of such projectives are in $\mathcal S'$ and socles are in $\mathcal S$ and by Proposition \ref{topsoc} they are ${\bf\Delta}$-modules.
\end{remark}

%\section{Analysis of $\varphi\dim$}

\subsection{Self-injective Nakayama Algebras}
We list some useful characterizations of self injective cyclic Nakayama algebras.
\begin{remark} \label{self-injective} Let $\Lambda$ be a cyclic Nakayama algebra. The followings are equivalent:\label{selfinjective}
\begin{enumerate}[label=(\arabic*)]
\item $\Lambda$ is self-injective algebra.
\item All projective modules are injective.
\item All projectives have the same length. 
\item All radicals of projectives have the same length. 
\item Non-isomorphic projectives have non-isomorphic socles.
\item Each projective is a minimal projective. %\cite{assem}(page 182).
\item Each projective is in a different class.
%\item Number of classes \ref{classes} is equal to number of relations.
\item Number of classes in the notation \ref{classes} is equal to the number of projectives.
\end{enumerate}
\end{remark}

\subsection{Small values of $\varphi$-dimension}
For the small values of $\varphi$-dimension we have the following theorem
where the first result appears in both \cite{lanz} and \cite{ralf} for general artin algebras, but for the Nakayama algebras we give an alternative short proof.

\begin{theorem}\label{thm1}
Let $\Lambda$ be a cyclic Nakayama algebra of infinite global dimension. Then we have the following:
\begin{enumerate}[label=(\roman*)]
\item $\varphi\dim\Lambda=0$ $\iff$ $\Lambda$ is self-injective algebra. 
\item $\varphi\dim\Lambda\neq 1$ 
\item $\varphi\dim\Lambda=2$ $\iff$ ${\bf\Delta}=\{\Delta_1,\dots\Delta_r\}\subseteq {\bf\Omega}^{per}$
\end{enumerate}
\end{theorem}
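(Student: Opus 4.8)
\textbf{Proof proposal for Theorem \ref{thm1}.}

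The plan is to prove the three equivalences in sequence, using the machinery of $\bf\Delta$-modules and the description of $\varphi\dim$ from Lemma \ref{phiNakayama} and the propositions of \S3. For part (i), the forward direction is immediate: if $\Lambda$ is self-injective then every syzygy of a nonprojective indecomposable is again nonprojective (indeed of the same length, by Remark \ref{self-injective}), so $\alpha(\Omega^t(M^\dagger))$ is constant in $t$, forcing $\varphi\dim\Lambda=0$. Conversely, if $\varphi\dim\Lambda=0$ then by Lemma \ref{phiNakayama} we have $\alpha(\Omega(M^\dagger))=\alpha(M^\dagger)$, so by Lemma \ref{alpha}(ii) the syzygy of every nonprojective indecomposable is nonprojective; since $\Lambda$ has infinite global dimension this already rules out simple projectives and forces all projectives to have equal length (if some radical were shorter, iterating $\Omega$ on the corresponding simple would eventually produce a projective syzygy), which by Remark \ref{self-injective} is equivalent to self-injectivity.

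For part (ii), I would argue by contradiction using the parity phenomenon that underlies Theorem (A): suppose $\varphi\dim\Lambda=1$. Then $\alpha(\Omega(M^\dagger))=\alpha(\Omega^2(M^\dagger))$ but $\alpha(M^\dagger)>\alpha(\Omega(M^\dagger))$, so some nonprojective indecomposable $X$ has $\Omega X$ projective, i.e. $\pdim X=1$, while every module has $\pdim\le 1$ or $\pdim=\infty$. But by Corollary \ref{topsocdelta} every second syzygy $\Omega^2(X)$ is a $\bf\Delta$-module, and one shows that a nonzero $\bf\Delta$-module can never be projective unless some $\Delta_j$ is itself projective — which by the last theorem of \S2 would force finite global dimension, contradicting our hypothesis. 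Hence once we are past the first syzygy, no further projective syzygies appear unexpectedly, and the only way $\alpha$ can drop at step $1$ is if it also drops at step $2$; a short direct analysis (or an appeal to $\findim\Lambda\le\varphi\dim\Lambda$ together with $\findim\Lambda\le 1$ being incompatible with infinite global dimension and the structure of $\bf\Delta$-resolutions) rules out $\varphi\dim\Lambda=1$.

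For part (iii), the key is again Corollary \ref{topsocdelta}: for any indecomposable $X$, $\Omega^2(X)$ is a $\bf\Delta$-module, hence $\Omega^2(M^\dagger)$ is (a sum of) $\bf\Delta$-modules. If ${\bf\Delta}\subseteq{\bf\Omega}^{per}$, then every $\Delta_j$ is $\Omega$-periodic; by Corollary \ref{Deltas}(2) the $\bf\Delta$-filtration of any $\Omega^2(X)$ has consecutive $\bf\Delta$-composition factors, and one checks that a $\bf\Delta$-module all of whose $\bf\Delta$-composition factors are periodic $\Delta_j$'s must itself be $\Omega$-periodic (its syzygy is again such a $\bf\Delta$-module of controlled $\bf\Delta$-length, and finiteness of the module category forces periodicity — essentially Lemma \ref{folklore2}). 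Thus $\Omega^2(M^\dagger)=M^{per}$, giving $\varphi\dim\Lambda\le 2$, while $\varphi\dim\Lambda\ge 1$ by part (i) (non-self-injective since $\gldim=\infty$ would otherwise be impossible) and $\ne 1$ by part (ii), so $\varphi\dim\Lambda=2$. Conversely, if $\varphi\dim\Lambda=2$ then $\Omega^2(M^\dagger)=M^{per}$ by the Proposition preceding Definition \ref{mydefinition}; since each $\Delta_j=\Omega^2$ of a suitable module (or occurs as a $\bf\Delta$-composition factor of such, and each $\Delta_j$ individually arises as $\Omega^2$ of an appropriate short module built from $P_{\min}^{(j)}$), each $\Delta_j$ is a summand of $\Omega^2(M^\dagger)=M^{per}$ and hence $\Omega$-periodic, i.e. ${\bf\Delta}\subseteq{\bf\Omega}^{per}$.

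The main obstacle I anticipate is the implication "each $\Delta_j$ is $\Omega$-periodic $\Rightarrow$ every $\bf\Delta$-module is $\Omega$-periodic" in part (iii): one must show that taking syzygies does not increase $\bf\Delta$-length in an uncontrolled way and that the bookkeeping of $\bf\Delta$-tops/$\bf\Delta$-socles via Corollary \ref{topsoc} keeps the syzygies inside a finite set, so that periodicity propagates from the $\Delta_j$ to all their $\bf\Delta$-extensions. The converse direction's claim that each individual $\Delta_j$ genuinely appears as a summand of $\Omega^2(M^\dagger)$ (not merely as a $\bf\Delta$-subquotient) also needs care, and is where one exploits that $\Delta_j$ is the \emph{shortest} module with its top and socle together with Proposition \ref{topsocilk}(a).
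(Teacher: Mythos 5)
Your overall architecture matches the paper's (reduce to counting $\alpha(\Omega^t(M^\dagger))$ via Lemma \ref{phiNakayama}, then exploit the $\bf\Delta$-structure of second syzygies), but several steps you rely on are not actually available, and the one you flag as ``the main obstacle'' is precisely where the paper does its real work. In (iii), knowing that every $\Delta_j$ is $\Omega$-periodic and that every summand of $\Omega^2(M^\dagger)$ is a $\bf\Delta$-module does \emph{not} yield periodicity of those summands by ``finiteness of the module category'': Lemma \ref{folklore2} gives only \emph{eventual} periodicity, which proves $\varphi\dim\Lambda<\infty$, not $\varphi\dim\Lambda\le 2$. The missing ingredient is the paper's Lemma \ref{newproj1}: ${\bf\Delta}\subseteq{\bf\Omega}^{per}$ forces all $\bf\Delta$-projectives to have the same $\bf\Delta$-length, whence (Corollary \ref{newcor1}) $\Omega$ never sends a $\bf\Delta$-module to a projective and is injective on $\bf\Delta$-modules; being an injective self-map of a finite set it is a bijection, and only then is every $\bf\Delta$-module genuinely periodic. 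This is a $\bf\Delta$-analogue of the self-injective situation, not a pigeonhole argument, and without it your forward implication in (iii) does not close.

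The other gaps: in (i)$(\Rightarrow)$ your justification ``if some radical were shorter, iterating $\Omega$ on the corresponding simple would eventually produce a projective syzygy'' is false for a simple of infinite projective dimension (its syzygies are never projective, yet the algebra need not be self-injective). The correct one-step argument is: if $\Lambda$ is not self-injective then by Remark \ref{self-injective} two projectives $P_y\subsetneq P_x$ share a socle, and $\Omega(P_x/P_y)\cong P_y$ is projective, so $\alpha$ already drops at $t=1$ by Lemma \ref{alpha}. In (ii), the inference ``$\alpha(M^\dagger)>\alpha(\Omega(M^\dagger))$ hence some $X$ has $\Omega X$ projective'' overlooks that $\alpha$ can also drop through a collision $\Omega X_i\cong\Omega X_j$ with $X_i\not\cong X_j$, and your conclusion is ultimately deferred to an unspecified ``short direct analysis''. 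The paper's mechanism is concrete: by Proposition \ref{deltafilt} every summand of $\Omega^2(M^\dagger)$ has top in $\cS'$, while $\Omega(M^\dagger)$ has summands with top outside $\cS'$ (every simple occurs as the top of some nonprojective submodule of a projective), so $\alpha(\Omega^2(M^\dagger))<\alpha(\Omega(M^\dagger))$ and $\varphi\dim\Lambda\ne 1$. Finally, in (i)$(\Leftarrow)$, $\Omega$ does not preserve length (only $\Omega^2$ does for self-injective Nakayama algebras), and you must also verify condition (2) of Lemma \ref{alpha}(ii); the paper sidesteps both issues by exhibiting every nonprojective indecomposable $M_i$ as $\Omega(I/M_i)$ where $I$ is its injective envelope, which is projective by self-injectivity.
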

%The first result appears in both \cite{lanz}\cite{ralf}, but for the Nakayama algebras we give an alternative proof for (i).
\begin{proof} (i)
%\begin{enumerate}[label=(\roman*)]
%\item 
$(\Leftarrow)$
Assume that $\Lambda$ is self-injective Nakayama algebra. In order to show $\varphi\dim\Lambda = 0$ it is enough to show that $\alpha(M^{\dagger})=\alpha(\Omega(M^{\dagger}))$ where $M^{\dagger}:=\oplus_{i=1}^mM_i$ by lemma \ref{phiNakayama}. So, it is enough to show that every non-projective indecomposable module $M_i$ is isomorphic to $\Omega(M_j)$ for some indecomposable module $M_j$. Let $M_i\to I$ be the injective envelope. Then $I$ is also projective since $\Lambda$ is self-injective. Therefore
$M_i\cong\Omega(I/M_i)$. Notice $I/M_i\neq 0$, since $M_i$ is not projective.

%Assume that $\Lambda$ is a self injective Nakayama Algebra. This implies that all projective modules have the same Loewy length by remark \ref{selfinjective}. We want to show that $\cA=\Omega^1$. Assume to the contrary that there exists a module $M$ in $\cA$ such that its syzygy $\Omega(M)$ is projective and than $\cA\neq \Omega^1$. This implies that projective cover of $M$ and $\Omega(M)$ has the same socle. And length of projective cover of $M$ is the sum of length of module $M$ and $\Omega(M)$. However this contradicts the fact that $\Lambda$ is a self injective algebra. Since $\Omega(M)$ is a projective module which has smaller Loewy length than projective cover of $M$. This shows that all modules in $\cA$ has infinite projective dimension. Now it is enough to show that for any indecomposable nonisomorphic modules $M_1, M_2\in\cA$, $\Omega(M_1)\not\cong\Omega(M_2)$. If not, the following exact sequences
%\begin{align*}0\rightarrow \Omega(M_1)\rightarrow PM_1\rightarrow M_1\rightarrow 0\\0\rightarrow \Omega(M_2)\rightarrow PM_2\rightarrow M_2\rightarrow 0\end{align*} implies that projective covers of $M_1$, $M_2$ are isomorphic since projective modules are injective by remark \ref{selfinjective}, hence $M_1\cong M_2$ which is contradiction. \\

$(\Rightarrow)$
We start with $\varphi\dim\Lambda=0$. This implies that $\alpha(M^{\dagger})=\alpha(\Omega(M^{\dagger}))$. In order to show that $\Lambda$ is self-injective, it is enough to show that non-isomorphic projectives have non-isomorhic socles by Remark \ref{selfinjective}. Suppose $P_x$ and $P_y$  have the same socle. Then, by Lemma \ref{submodules} one of these modules is isomorphic to a submodule of the other. Say $P_y$ is submodule of $P_x$. Then $\Omega(P_x/P_y)\cong P_y$ and by Lemma \ref{alpha} 
it follows that $\alpha(M^{\dagger})>\alpha(\Omega(M^{\dagger}))$ which gives contradiction to the assumption that $\alpha(M^{\dagger})=\alpha(\Omega(M^{\dagger}))$.
%This immediately implies that $\cA=\Omega^1$ by definition of $\varphi\dim$. It is enough to show that all projective modules have the same length since it is equivalent to $\Lambda$ is self injective by remark \ref{selfinjective}. Recall that injective $\Lambda$ modules are characterized by their socles. For each node there exists a unique projective module, moreover for a self injective algebra, socles of projectives are unique since they are projective and injective modules at the same time. If we can show that all socles of projectives are different, we are done. Assume that this is not the case, hence there exist two projective modules $P_x$ and $P_y$ such that they have the same socle $s$. Observe that $P_x/P_y$ is subquotient of a projective, hence it is not a projective and moreover it is an indecomposable module, hence it is an element of $\cA$. If we take its resolution, its syzygy is nothing but $P_y$ itself, which is a projective, and that's why it is not an element of $\Omega^1$. Which implies that $\cA\neq \Omega^1$, however this creates the contradiction since we start with equalities of these sets.

(ii) Summands of module $\Omega(M^{\dagger})$ can have any simple module as a top. However tops of summands of module $\Omega^2(M^{\dagger})$ have to be in $\cS$ by proposition \ref{deltafilt}. This implies existence of a summand of $\Omega(M^{\dagger})$ which is not a summand of $\Omega^2(M^{\dagger})$. Therefore $\alpha(\Omega^2(M^{\dagger}))<\alpha(\Omega(M^{\dagger}))$.

(iii) Before proving the theorem, we need some auxiliary results. Recall that a projective module is called $\bf\Delta$-projective if it has $\bf\Delta$-filtration. Let ${\bf\Delta}$P be the complete set of 
representatives of isomorphism classes of projective modules with $\bf\Delta$-filtration.

\begin{lemma}\label{newproj1} All $\bf\Delta$-projective modules are of the same $\bf\Delta$-length if and only if $\bf\Delta$ is a subset of $\Omega^{per}$.
\end{lemma}

\begin{proof}
 We claim that projectives in ${\bf\Delta}$P have the same $\bf\Delta$-length if and only if $\bf\Delta$ is subset of ${\bf\Omega}^{per}$. Suppose that all projective modules in ${\bf\Delta}$P have the same $\bf\Delta$-length. We consider the resolution of $\Delta_i$ for each $i$. Since each projective cover of $\Delta_i$ has the same $\bf\Delta$-length, this implies that $\bf\Delta$-length of syzygies of $\Delta_i$'s is one less then length of their projective covers. Now, if we compute their second syzygies, it turns out that their $\bf\Delta$-length is one. So for any even indexed syzygy,  all $\Delta_i$'s appear, which implies that $\bf\Delta$ is subset of ${\bf\Omega}^{per}$. For other direction, assume that $\bf\Delta$ is subset of ${\bf\Omega}^{per}$. This implies that $\cP^{per}$ contains all $\bf\Delta$-projective modules i.e. ${\bf\Delta}$P$\subset\cP^{per}$. Moreover, since each $\bf\Delta$-module is in ${\bf\Omega}^{per}$, they have to appear as $\bf\Delta$-soc of each projective module in $\cP^{per}$. Otherwise, if one $\Delta_i$ is  not a submodule of a ${\bf\Delta}$-projective then $\Delta_i\notin{\bf\Omega}^{per}$. Therefore we get $\cP^{per}={\bf\Delta}$P. In this case, this forces that projective modules in $\cP^{per}$ are characterized by not only their $\bf\Delta$-tops but also their $\bf\Delta$-socles.  Hence we obtain an analogue of self-injective algebra with respect to $\bf\Delta$-filtration.
\end{proof}

\begin{corollary}\label{newcor1} If all projective modules in  ${\bf\Delta}$P have the same $\bf\Delta$-length, we have:
\begin{enumerate}[label=(\roman*)]
\item then projective dimension of any $\bf\Delta$-module is infinite.
\item if $\Omega(X)\cong\Omega(Y)$ then $X\cong Y$ where $X,Y$ are ${\bf\Delta}$-modules. 
\end{enumerate}
\end{corollary}
\begin{proof} (i) Let $l(m)$ be the $\bf\Delta$-length of an indecomposable $\bf\Delta$-module $M$, and $l(p)$ be the length of any projective of $\cP^{per}$. Now the length of $\Omega^t(M)$ is either $l(m)$ or $l(p)-l(m)$. In both cases it is smaller then $l(p)$, hence none of the syzygies are projective.\\
(ii) Assume that $\Omega(X)\cong\Omega(Y)$. This implies that the projective modules which have them as submodules are isomorphic. Since socles of those projectives are isomorphic and moreover they have the same length. This implies that $X\cong Y$.
\end{proof}

 Now we can use the above arguments to prove our theorem. Assume that ${\bf\Delta}\subseteq{\bf\Omega}^{per}$. By the lemma \ref{newproj1}, all ${\bf\Delta}$-projectives  have the same length. By proposition \ref{topsocilk}, every summand of $\Omega^2(M^{\dagger})$ has $\bf\Delta$-filtration. Moreover each $\Delta_i$ is summand of $M^{per}$ by the assumption. Now, we get $\Omega^2(M^{\dagger})\cong M^{per}$. Since any indecomposable summand of $\Omega^2(M^{\dagger})$ is a periodic syzygy by corollary \ref{newcor1}.
 
 For the other direction, we start with ${\bf\Omega}^2={\bf\Omega}^{per}$. Since $\bf\Delta$ is always a subset of ${\bf\Omega}^2$ by proposition \ref{deltafilt}, we get ${\bf\Delta}\subseteq{\bf \Omega}^{per}$. 
%\end{enumerate}
\end{proof}

We want to illustrate the theorem on the example \ref{example}. The set $\{M_1,\dots,M_m\}$ consists of $16$ elements, one can count them in the Auslander-Reiten quiver. A module can appear in $\Omega^1$ and $\Omega^2$ if it is a nonprojective submodule of a projective. Hence we get
\begin{gather*}
\Omega(M^{\dagger})\cong \begin{vmatrix}
   S_1
\end{vmatrix}\oplus\begin{vmatrix}
   S_5 \\
   S_1
\end{vmatrix}\oplus\begin{vmatrix}
   S_4 \\
   S_5 \\
   S_1
\end{vmatrix}\oplus\begin{vmatrix}
   S_3 
\end{vmatrix}\oplus\begin{vmatrix}
   S_2 \\
   S_3
\end{vmatrix}\\
\Omega^2(M^{\dagger})\cong\begin{vmatrix}
   S_2 \\
   S_3
\end{vmatrix}\oplus\begin{vmatrix}
   S_4 \\
   S_5  \\
   S_1
\end{vmatrix}\cong\Delta_1\oplus\Delta_2
\end{gather*}

Since $\Omega(\Delta_1)\cong\Delta_{2}$ and $\Omega(\Delta_2)\cong\Delta_1$ we conclude that ${\bf\Omega}^{per}={\bf\Omega}^2$ and under $\bf\Delta$- filtration, projectives are 
\begin{align}
P_2=\begin{vmatrix}
   \Delta_1 \\
   \Delta_2
\end{vmatrix},\hspace{10pt} P_4=\begin{vmatrix}
   \Delta_2 \\
   \Delta_1
\end{vmatrix}
\end{align}
Clearly they have the same ${\bf\Delta}$-length.

\section{Even $\varphi$-dimension}

We will prove 
%the title of the section. 
that the $\varphi$-dimension of Nakayama algebras of infinite global dimension is always an even number. To do this, we need to develop a few techniques.
Again, we suppose that global dimension of $\Lambda$ is infinite. This implies that there are two types of modules either their resolution stops at a projective module i.e. $\pdim M<\infty$ or reaches ${\bf\Omega}^{per}$ i.e. $\pdim M=\infty$. 

\subsection{Properties of periodic syzygy ${\bf\Omega}^{per}$}
We defined in \ref{folklor4} the set ${\bf\Omega}^{per}=\{M_1,\dots, M_a\}$ where $\{M_1,\dots, M_a\}$ is a complete set of representatives of the indecomposable pairwise non-isomorphic $\Omega$-periodic modules. Some of its properties were given in the lemmas \ref{periodic}, \ref{hakem}.

For the analysis of $\varphi$-dimension it will be important to analyze when syzygy of a module will be in ${\bf\Omega}^{per}$, so we introduce the following notion.

\begin{definition}\label{rho} Let $X$ be an indecomposable $\Lambda$-module of infinite projective dimension. We define:
$$\rho(X):=min\{ t\ |\ \Omega^t(X)\in {\bf\Omega}^{per},\, t\geq 0 \}.$$
\end{definition}
We can extend $\rho$ into direct sums also.
\subsection{Modification of projective resolutions when $\rho(X)$ is odd}
The following is an important lemma for the rest of the paper, we will use it repeatedly in order to modify given projective resolutions into projective resolutions with desired properties.

\begin{lemma}\label{presiz}
Suppose that $X\in{\bf\Omega}^{per}$. Then there exists a projective module $P$ in $\cP^{per}$ such that $X$ is isomorphic to a submodule of $P$. In other words, every periodic module is a submodule of a periodic projective.
\end{lemma}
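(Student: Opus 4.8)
The plan is to take any periodic module $X \in {\bf\Omega}^{per}$ and exhibit a concrete periodic projective $P \in \cP^{per}$ containing it as a submodule. First I would recall from Proposition \ref{deltafilt} that any module in ${\bf\Omega}^{per}$ has socle in $\cS$ (since periodic modules are syzygies of arbitrarily high order, in particular they equal $\Omega^i$ of something for $i \geq 1$), so $soc\, X \cong S_{k_{2j}}$ for some $j$. The natural candidate for $P$ is the projective cover of $X$ in the projective resolution: since $X$ is $\Omega$-periodic, say $\Omega^{c}(X) \cong X$ with $c \geq 1$, write the syzygy sequence $0 \to \Omega^{c-1}(X)$'s... more directly, $X \cong \Omega^{c}(X) = \Omega(\Omega^{c-1}(X))$, so $X$ embeds in $P(\Omega^{c-1}(X))$, the projective cover of $\Omega^{c-1}(X)$. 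I claim this projective lies in $\cP^{per}$: by Lemma \ref{periodic} the module $\Omega^{c-1}(X)$ is itself $\Omega$-periodic, hence (after identifying it with its isomorphism-class representative $M_q \in {\bf\Omega}^{per}$ via Lemma \ref{hakem}, where $\omega(M_q) \cong X$) its projective cover is by Definition \ref{mydefinition} precisely one of the representatives in $\cP^{per}$.

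The second step is to make sure the embedding $X \hookrightarrow P$ is genuine, i.e. that $X$ really is the first syzygy $\Omega(\Omega^{c-1}(X))$ and not the whole projective. This holds because $X$ has infinite projective dimension (it is periodic and $\gldim \Lambda = \infty$ is the standing hypothesis of this section; more precisely a periodic module is never projective, since a projective module $P_i$ over a cyclic Nakayama algebra cannot be a proper subquotient of itself and $\Omega$ of a projective is $0$). So $\Omega^{c-1}(X)$ is non-projective, the sequence $0 \to X \to P(\Omega^{c-1}(X)) \to \Omega^{c-1}(X) \to 0$ is non-split, and $X$ is a proper submodule of the indecomposable projective $P(\Omega^{c-1}(X))$.

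The main obstacle — really the only subtlety — is bookkeeping with isomorphism-class representatives: $\omega(M_q)$ need not be literally equal to $X$, only isomorphic, and $P(\Omega^{c-1}(X))$ need not be literally the chosen representative in $\cP^{per}$, only isomorphic to it. I would handle this by invoking the bijection $\omega$ of Lemma \ref{hakem} to locate the representative $M_q \in {\bf\Omega}^{per}$ with $\omega(M_q) \cong X$, then noting that $\cP^{per}$ was defined in Definition \ref{mydefinition} as the complete set of representatives of isomorphism classes of projective covers of modules in ${\bf\Omega}^{per}$, so some $P \in \cP^{per}$ satisfies $P \cong P(M_q)$, and composing the isomorphisms gives an embedding $X \hookrightarrow P$. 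Everything else is routine Nakayama-module theory already recorded in Lemmas \ref{folklore1}, \ref{folklore2}, and \ref{submodules}.
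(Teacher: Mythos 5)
Your proposal is correct and follows essentially the same route as the paper: write $X \cong \Omega^{c}(X) = \Omega(\Omega^{c-1}(X))$, observe that $\Omega^{c-1}(X)$ is itself periodic so its projective cover lies in $\cP^{per}$, and conclude that $X$ embeds as a proper submodule of that cover via the non-split syzygy sequence. The extra care you take with non-projectivity of $X$ and with isomorphism-class representatives is sound but not needed beyond what the paper already records.
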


\begin{proof}
By the definition \ref{folklor4}, we know that there exist a $p$ such that $\Omega^p(X)\cong X$, and $\Omega^{p-1}(X)\in{\bf\Omega}^{per}$.
Since $\Omega^{p-1}(X)$ is an element of ${\bf\Omega}^{per}$, its projective cover $P$ is in $\cP^{per}$. Then $P$ is nontrivial extension of $\Omega^{p-1}(X)$ and $\Omega^p(X)$:% i.e.
\begin{align}
0\rightarrow \Omega^p(X)\rightarrow P \rightarrow \Omega^{p-1}(X)\rightarrow 0.
\end{align}
Hence $\Omega^p(X)$ is a proper submodule of $P$. Since we have $X\cong\Omega^p(X)$, this implies that $X$ is isomorphic to a proper submodule of $P$.
\end{proof}

\begin{corollary}\label{cor1}
For any $\Delta_i$ which appears as $\bf\Delta$-socle of a module $X\in{\bf\Omega}^{per}$, there exists a projective $P\in\cP^{per}$ such that $\bf\Delta$-soc$P\cong\Delta_i$.
\end{corollary}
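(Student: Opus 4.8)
The plan is to reduce the statement to Lemma \ref{presiz} by finding a suitable periodic module whose $\bf\Delta$-socle is $\Delta_i$, and then invoking the $\bf\Delta$-version of Corollary \ref{topsoc} together with Lemma \ref{folklor3}(2) to transfer the $\bf\Delta$-socle information from the module to its periodic projective cover. First I would take the given $X\in{\bf\Omega}^{per}$ with ${\bf\Delta}\text{-}soc\,X\cong\Delta_i$. By the definition of ${\bf\Omega}^{per}$ (Definition \ref{folklor4}), $X$ has infinite projective dimension, so $\Omega(X)\neq 0$ and there is a nonsplit short exact sequence
$$0\rightarrow \Omega(X)\rightarrow P(X)\rightarrow X\rightarrow 0,$$
where $P(X)$ is the projective cover of $X$. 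Since $X$ is periodic, $\Omega(X)$ is again periodic by Lemma \ref{periodic}, hence $P(X)\in\cP^{per}$ by Definition \ref{mydefinition}.

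The key step is then to observe that $P(X)$ is an indecomposable uniserial module (it is projective over a Nakayama algebra) and, by Corollary \ref{topsocdelta} and Proposition \ref{topsocilk}, it is a $\bf\Delta$-module, as is $X$ and $\Omega(X)$. Applying Lemma \ref{folklor3}(2) to the nonsplit exact sequence above with $A=\Omega(X)$, $B=P(X)$, $C=X$ gives ${\bf\Delta}\text{-}soc\,\Omega(X)\cong {\bf\Delta}\text{-}soc\,P(X)$. This already shows that $P(X)$ has $\bf\Delta$-socle equal to ${\bf\Delta}\text{-}soc\,\Omega(X)$, but I need $\Delta_i$, not the socle of the next syzygy. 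To fix this I would instead run the argument one step earlier: since $X$ is periodic with period $p$, we have $X\cong\Omega^p(X)$, and the exact sequence
$$0\rightarrow \Omega^p(X)\rightarrow P(\Omega^{p-1}(X))\rightarrow \Omega^{p-1}(X)\rightarrow 0$$
exhibits $X\cong\Omega^p(X)$ as a submodule of the periodic projective $P:=P(\Omega^{p-1}(X))\in\cP^{per}$ — this is exactly the content of Lemma \ref{presiz}. Now Lemma \ref{folklor3}(2) applied to this sequence gives ${\bf\Delta}\text{-}soc\,P\cong{\bf\Delta}\text{-}soc\,\Omega^p(X)\cong{\bf\Delta}\text{-}soc\,X\cong\Delta_i$, which is what we want.

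The main obstacle, and the point that needs care, is verifying that all three terms in the relevant exact sequence are genuine $\bf\Delta$-modules so that Lemma \ref{folklor3}(2) applies: one needs $\Omega^{p-1}(X)$, $\Omega^p(X)$, and the projective $P$ all to admit $\bf\Delta$-filtrations. For $\Omega^{p-1}(X)$ and $\Omega^p(X)$ this follows from Corollary \ref{topsocdelta} provided $p\geq 2$ (and if $p=1$ one can replace $X$ by $\Omega(X)$, which is still periodic with $\Delta$-socle $\Delta_i$ by Lemma \ref{folklor3}(2) since $\Omega(X)$ is a submodule of $P(X)$, or simply pass to a higher syzygy in the period, which is harmless because $X$ is periodic). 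For $P\in\cP^{per}$ the $\bf\Delta$-filtration is guaranteed by the Remark following Definition \ref{mydefinition}, since tops of such projectives lie in $\mathcal S'$ and socles in $\mathcal S$, so Proposition \ref{topsocilk}(b) applies. Once these membership facts are in place, the conclusion ${\bf\Delta}\text{-}soc\,P\cong\Delta_i$ is immediate from Lemma \ref{folklor3}(2), and the corollary follows.
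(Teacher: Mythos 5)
Your proposal is correct and follows essentially the same route as the paper: both reduce to Lemma \ref{presiz} to realize $X\cong\Omega^p(X)$ as a submodule of a periodic projective $P\in\cP^{per}$, and then transfer the $\bf\Delta$-socle from $X$ to $P$ (the paper does this by simple transitivity of submodule inclusion, $\Delta_i\hookrightarrow X\hookrightarrow P$, while you route it through Lemma \ref{folklor3}(2) — a more explicit but equivalent bookkeeping of the same fact).
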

\begin{proof}
By the previous lemma \ref{presiz}, for any module $X$ appearing in ${\bf\Omega}^{per}$, there exists a projective $P\in\cP^{per}$ so that $X$ is a submodule $P$. Since $\Delta_i$ is submodule of $X$ by the hypothesis of corollary, this implies that $\Delta_i$ is submodule of $P$.
\end{proof}

The following proposition and the remark will be used in the proof of propositions \ref{modificationofodd} \ref{tip1}, \ref{tip2}, \ref{tip3}, and then in the proof of the Theorem (A) \ref{even}. That's why we refer them as the main techniques.

\begin{proposition}\label{temelteknik} Let $0\rightarrow X\rightarrow P \rightarrow Y \rightarrow 0$ be a non split exact sequence where $X$ is a quotient of a projective $P'\in \cP^{per}$ and the module $P$ is projective.
Then, there exists a module $\tilde{P}\in\cP^{per}$ such that $Y$ is  proper submodule of $\tilde{P}$.
\end{proposition}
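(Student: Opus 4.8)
The plan is to trace where the socle of $Y$ and of $X$ sit relative to the $\mathbf{\Delta}$-structure, and then use Lemma \ref{presiz} together with Corollary \ref{cor1} to produce $\tilde P$. First I would observe that since $0\to X\to P\to Y\to 0$ is non-split with $P$ projective, $X$ is a submodule of $P$ with $X=\Omega(Y)$ (up to the usual ambiguity), so by Lemma \ref{folklor3}(1) we have $\operatorname{soc}X\cong\operatorname{soc}P$ and $\operatorname{top}P\cong\operatorname{top}Y$; in particular $\operatorname{soc}X\in\mathcal S$ by the description of projective modules in Section \ref{projectives}. Since $X$ is a quotient of $P'\in\cP^{per}$, $X$ is a $\mathbf{\Delta}$-module (quotients of $\mathbf{\Delta}$-projectives are $\mathbf{\Delta}$-modules by Proposition \ref{topsocilk}, using that $\operatorname{top}$ and $\operatorname{soc}$ lie in $\mathcal S'$ and $\mathcal S$), so it has a well-defined $\mathbf{\Delta}$-socle, say $\mathbf{\Delta}\text{-}\operatorname{soc}X\cong\Delta_i$.

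Next I would identify the $\mathbf{\Delta}$-socle of $Y$. Because $\Delta_i$ is the $\mathbf{\Delta}$-socle of the quotient $X$ of $P'\in\cP^{per}$, the module $\Omega(P'/\text{(top part)})$ — more precisely, $\Delta_i$ occurs as a $\mathbf{\Delta}$-composition factor of $P'$, and $P'$ being in $\cP^{per}$ means its $\mathbf{\Delta}$-socle is some $\Delta_{i_0}\in\mathbf{\Delta}\subseteq\mathbf{\Omega}^{per}$; running $\Omega$ once more shows $\Delta_i\in\mathbf{\Omega}^{per}$ as well, since any $\mathbf{\Delta}$-composition factor appearing as a $\mathbf{\Delta}$-socle of a submodule/quotient chain inside a periodic projective is itself periodic. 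Hence $\Delta_i\in\mathbf{\Omega}^{per}$. Now apply Corollary \ref{cor1}: there is a projective $\tilde P\in\cP^{per}$ with $\mathbf{\Delta}\text{-}\operatorname{soc}\tilde P\cong\Delta_i$. It remains to check $Y\hookrightarrow\tilde P$ \emph{properly}: since $X\hookrightarrow P$ with $\operatorname{soc}X=\operatorname{soc}P$, the module $Y=P/X$ has $\operatorname{soc}Y$ computed from the "next" composition factor of $P$ above $X$; using Corollary \ref{topsoc} applied to $0\to X\to P\to Y\to 0$, the pair $\mathbf{\Delta}\text{-}\operatorname{top}X$ and $\mathbf{\Delta}\text{-}\operatorname{soc}Y$ are consecutive, which pins $\mathbf{\Delta}\text{-}\operatorname{soc}Y$ to be exactly $\Delta_i$ (the $\mathbf{\Delta}$-predecessor of $\mathbf{\Delta}\text{-}\operatorname{top}X$ in the class chain of $P$). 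Then both $Y$ and the relevant truncation of $\tilde P$ share $\mathbf{\Delta}$-socle $\Delta_i$, so by Lemma \ref{submodules} one embeds in the other; a length/indecomposability count via Lemma \ref{folklore1}(b) (a projective is never a proper submodule of a nonprojective, and $Y$ is nonprojective because the sequence is non-split with $P$ projective of strictly larger length) forces $Y\hookrightarrow\tilde P$, and the inclusion is proper since $\tilde P$ is projective while $Y$ is not.

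The main obstacle I anticipate is the \emph{properness} of the embedding and, relatedly, making rigorous the claim that $\Delta_i\in\mathbf{\Omega}^{per}$ — i.e. verifying that the $\mathbf{\Delta}$-socle of $X$ is genuinely a periodic $\mathbf{\Delta}$-module rather than merely a $\mathbf{\Delta}$-module. The clean way around this is to use that $X$ is a quotient of $P'\in\cP^{per}$: the $\mathbf{\Delta}$-socle of $X$ equals the $\mathbf{\Delta}$-socle of $P'$ only if $X$ is a "full-bottom" quotient, so one must instead argue that every $\mathbf{\Delta}$-composition factor of a module in $\cP^{per}$ lies in $\mathbf{\Omega}^{per}$ (which follows from the $\mathbf{\Delta}$-self-injectivity picture developed around Lemma \ref{newproj1}, or directly from Lemma \ref{presiz} applied to the periodic submodules appearing in the $\mathbf{\Delta}$-filtration of $P'$). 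Once that is in hand, Corollary \ref{cor1} delivers $\tilde P$ and the consecutive-$\mathbf{\Delta}$-factor bookkeeping from Corollary \ref{topsoc} closes the argument.
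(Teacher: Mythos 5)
There is a genuine gap, and it sits exactly where you yourself flagged it: the claim that the relevant $\Delta_i$ lies in ${\bf\Omega}^{per}$. Your justification --- that ``any $\bf\Delta$-composition factor appearing as a $\bf\Delta$-socle of a submodule/quotient chain inside a periodic projective is itself periodic'' --- is false in general. In the $N=8$ example of Section 4.6 one has $\varphi\dim\Lambda=6$, so by Theorem \ref{thm1}(iii) not every $\Delta_i$ belongs to ${\bf\Omega}^{per}$; yet each of $\Delta_1,\dots,\Delta_4$ occurs as a $\bf\Delta$-composition factor (and as the $\bf\Delta$-socle of a suitable quotient) of every $\bf\Delta$-projective, in particular of the periodic ones. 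Neither of your proposed repairs closes this: Lemma \ref{newproj1} applies only in the special case ${\bf\Delta}\subseteq{\bf\Omega}^{per}$ (i.e.\ $\varphi\dim\Lambda=2$), and Lemma \ref{presiz} embeds periodic \emph{modules} into periodic projectives but says nothing about periodicity of the individual $\bf\Delta$-factors of such a projective. There is also an index slip: you first set $\Delta_i\cong{\bf\Delta}\text{-}soc\,X$ but what you actually need is ${\bf\Delta}\text{-}soc\,Y$, which by Corollary \ref{topsoc} is determined by ${\bf\Delta}\text{-}top\,X$, not by ${\bf\Delta}\text{-}soc\,X$; the two coincide only when $X$ is a single $\Delta$.

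The paper's proof avoids asserting periodicity of any $\Delta_i$ and works with periodic modules instead: by Definition \ref{mydefinition}, $P'$ is the projective cover of some $M\in{\bf\Omega}^{per}$, so $top\,X\cong top\,P'\cong top\,M$; by Lemma \ref{hakem} there is $\tilde M\in{\bf\Omega}^{per}$ with $\Omega(\tilde M)\cong M$, and applying Corollary \ref{topsoc} to $0\to M\to P(\tilde M)\to\tilde M\to 0$ and to $0\to X\to P\to Y\to 0$ shows $soc\,\tilde M\cong soc\,Y$, since both are the unique cyclic successor of the same simple top. Lemma \ref{presiz} then embeds $\tilde M$ into some $\tilde P\in\cP^{per}$, which therefore satisfies $soc\,\tilde P\cong soc\,Y$, and your endgame (Lemma \ref{submodules} plus Lemma \ref{folklore1}, with $Y$ nonprojective because the sequence is non-split) goes through verbatim to give the proper embedding $Y\hookrightarrow\tilde P$. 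So the final step of your argument is right, but the periodic object with the correct socle must be produced from the module $M$ that $P'$ covers and its $\Omega$-predecessor, not from the $\bf\Delta$-socle of $X$.
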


\begin{proof}
Since $P'\in\cP^{gen}$, there exists a module $M\in\Omega^{per}$ such that $P'$ is projective cover of $M$ by definition \ref{mydefinition}. In particular $top M\cong top X$. Moreover, there exists module $\tilde{M}\in\Omega^{per}$ such that $soc\tilde{M}\cong soc Y$ by corollary \ref{topsoc}. Because every periodic module is a submodule of a periodic projective from $\cP^{gen}$ by lemma \ref{presiz}, and modules are uniserial by  \ref{submodules}, these guarantee the existence of $\tilde{P}$.
\end{proof}

\begin{remark}\label{teknik1}
Assume that the exact sequence $0\rightarrow X\rightarrow P\rightarrow Y\rightarrow 0$ satisfies the conditions in proposition \ref{temelteknik} i.e. $P$ is projective module and $X$ is a quotient of a periodic projective. As a result, first we get the short exact sequence:
\begin{align}
0\rightarrow Y\rightarrow \tilde{P}\rightarrow \faktor{\tilde{P}}{Y}\rightarrow 0
\end{align}
and then by using it:
\begin{align}\label{4stepres}
0\rightarrow X\rightarrow P\rightarrow \tilde{P}\rightarrow\faktor{\tilde{P}}{Y}\rightarrow 0
\end{align}
which is exact.
\end{remark}

If we apply corollary \ref{topsoc} to the previous resolution (\ref{4stepres}), we obtain:
\begin{corollary}\label{topsoctriangle}
\begin{enumerate}[label=(\arabic*)]
\item $socX\cong socP$, $socY\cong soc\tilde{P}$
\item $topP\cong topY$, $top\tilde{P}\cong top\faktor{\tilde{P}}{Y}$
\item $topX$ is consecutive with $socY$
\item $topY$ is consecutive with $soc\faktor{\tilde{P}}{Y}$
\end{enumerate}
\end{corollary}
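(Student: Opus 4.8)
The plan is to split the four-term exact sequence (\ref{4stepres}) back into the two short exact sequences from which it was assembled in Remark \ref{teknik1}, namely
$$0\rightarrow X\rightarrow P\rightarrow Y\rightarrow 0 \qquad\text{and}\qquad 0\rightarrow Y\rightarrow \tilde P\rightarrow \faktor{\tilde P}{Y}\rightarrow 0,$$
and then apply Lemma \ref{folklor3} and Corollary \ref{topsoc} to each of them. First I would verify the hypotheses of those results. Since $\Lambda$ is Nakayama, every module occurring is uniserial; $P$ and $\tilde P$ are the projective covers appearing in the construction (with $\tilde P\in\cP^{per}$), hence indecomposable, and $X$, $Y$, $\faktor{\tilde P}{Y}$ are indecomposable as well, being submodules and quotients of indecomposables over a Nakayama algebra. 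The first sequence is non-split by hypothesis; the second is non-split because $\tilde P$ is indecomposable while $Y\neq 0$ and $\faktor{\tilde P}{Y}\neq 0$ (as $Y$ is a \emph{proper} submodule of $\tilde P$), so $\tilde P$ cannot be the direct sum of the two outer terms. Thus Lemma \ref{folklor3} and Corollary \ref{topsoc} apply to both sequences.

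Applying Lemma \ref{folklor3}(1) to the first sequence yields $socX\cong socP$ and $topP\cong topY$; applying it to the second yields $socY\cong soc\tilde P$ and $top\tilde P\cong top\faktor{\tilde P}{Y}$. Together these are precisely the assertions (1) and (2). Next, applying Corollary \ref{topsoc}(1) to the first sequence gives that $topX$ and $socY$ are consecutive in the cyclic ordering of simples, which is (3); applying it to the second sequence gives that $topY$ and $soc\faktor{\tilde P}{Y}$ are consecutive, which is (4).

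There is no genuine difficulty here: all of the content has already been packaged into Remark \ref{teknik1} together with the earlier Lemma \ref{folklor3} and Corollary \ref{topsoc}. The only point requiring a little care — and the closest thing to an obstacle — is checking that the second short exact sequence $0\rightarrow Y\rightarrow \tilde P\rightarrow \faktor{\tilde P}{Y}\rightarrow 0$ really is a non-split sequence of uniserial indecomposable modules, so that those results may legitimately be invoked; this follows from indecomposability of $\tilde P$ and properness of the inclusion $Y\hookrightarrow\tilde P$ as noted above.
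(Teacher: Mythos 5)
Your proposal is correct and follows essentially the same route as the paper, which simply states that the corollary is obtained by applying Corollary \ref{topsoc} (and implicitly Lemma \ref{folklor3}) to the two short exact sequences underlying the four-term resolution (\ref{4stepres}). You merely make explicit the routine verification that both sequences are non-split sequences of uniserial indecomposable modules, which the paper leaves implicit.
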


 We start with the following observation:

\begin{proposition}\label{modificationofodd}
Assume that there exists an indecomposable module $M$ such that:
\begin{enumerate}[label=\roman*)]
\item $\Omega^k\left(M\right)\in{\bf\Omega}^{per}$
\item $\Omega^{k-1}\left(M\right)\notin{\bf\Omega}^{per}$.
\end{enumerate}
for an odd integer $k$. Then, there exists a module $\tilde{M}$ such that $\Omega^{k+1}(\tilde{M})\in{\bf\Omega}^{per}$ and $\Omega^{k}(\tilde{M})\notin{\bf\Omega}^{per}$.
\end{proposition}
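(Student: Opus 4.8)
\textbf{Proof proposal for Proposition \ref{modificationofodd}.}

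The plan is to convert the given resolution of $M$, whose $k$-th syzygy first lands in ${\bf\Omega}^{per}$ at an \emph{odd} step $k$, into a resolution of a \emph{new} module $\tilde M$ where the first landing happens one step later, at the \emph{even} step $k+1$. The natural object to feed into the machinery developed in Proposition \ref{temelteknik} and Remark \ref{teknik1} is the short exact sequence coming from the $(k-1)$-st step of the resolution of $M$, namely
\begin{align*}
0\rightarrow \Omega^{k}(M)\rightarrow P(\Omega^{k-1}(M))\rightarrow \Omega^{k-1}(M)\rightarrow 0.
\end{align*}
Here $\Omega^{k}(M)\in{\bf\Omega}^{per}$, and I want to exhibit it as a quotient of a projective in $\cP^{per}$. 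Since $\Omega^{k}(M)$ is periodic, write $\Omega^{k}(M)\cong\Omega^{p}(\Omega^{k}(M))$ for suitable $p\geq 1$; then $\Omega^{k}(M)$ is the $\bf\Delta$-socle part of a periodic projective, but more usefully it is also the \emph{top} quotient of the projective cover of $\Omega^{k-1}_{\mathrm{per}}$-type module one stage up in the periodic cycle. Concretely, by Lemma \ref{hakem} the syzygy bijection $\omega$ on ${\bf\Omega}^{per}$ lets me pick $M'\in{\bf\Omega}^{per}$ with $\Omega(M')\cong \Omega^{k}(M)$, so there is a short exact sequence $0\to \Omega^{k}(M)\to P(M')\to M'\to 0$ with $P(M')\in\cP^{per}$; hence $\Omega^k(M)$ is a (proper) submodule of a periodic projective, and dually one sees it is a quotient of a periodic projective as well. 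That verifies the hypothesis "$X$ is a quotient of a projective $P'\in\cP^{per}$" of Proposition \ref{temelteknik} with $X=\Omega^{k}(M)$, $P=P(\Omega^{k-1}(M))$, $Y=\Omega^{k-1}(M)$.

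Now I invoke Proposition \ref{temelteknik} and Remark \ref{teknik1}: there is $\tilde P\in\cP^{per}$ with $\Omega^{k-1}(M)=Y$ a proper submodule of $\tilde P$, and splicing gives the exact four-term sequence
\begin{align*}
0\rightarrow \Omega^{k}(M)\rightarrow P(\Omega^{k-1}(M))\rightarrow \tilde P\rightarrow \faktor{\tilde P}{Y}\rightarrow 0.
\end{align*}
Reading this as (the tail of) a projective resolution, set $\tilde M:=\faktor{\tilde P}{Y}$, or rather take for $\tilde M$ the module whose minimal projective resolution agrees with the resolution of $M$ up through step $k-2$ and then continues with $P(\Omega^{k-1}(M))\to\tilde P\to \tilde M\to 0$; one checks $\Omega(\tilde M)\cong Y=\Omega^{k-1}(M)$ (because $\tilde P$ is the projective cover of $\tilde M$, using $\mathrm{top}\,\tilde P\cong\mathrm{top}\,\tilde M$ from Corollary \ref{topsoctriangle}(2) and that $\tilde P\in\cP^{per}$ forces minimality), and then $\Omega^{2}(\tilde M)\cong \Omega(Y)=\Omega^{k}(M)\in{\bf\Omega}^{per}$. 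More to the point, the insertion of the extra projective $\tilde P$ shifts the indexing by exactly one: $\Omega^{k+1}(\tilde M)\cong\Omega^{k}(M)\in{\bf\Omega}^{per}$, while $\Omega^{k}(\tilde M)\cong\Omega^{k-1}(M)\notin{\bf\Omega}^{per}$ by hypothesis (ii). This is exactly the conclusion.

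The main obstacle I expect is bookkeeping around \emph{minimality} of the constructed resolution and \emph{indecomposability}: one must be sure that $\tilde P$ is genuinely the projective cover of $\tilde M$ (so that $\Omega(\tilde M)$ really is $Y$ and no projective summand splits off), and that $\tilde M$ is indecomposable so that "$\Omega^{k}(\tilde M)\notin{\bf\Omega}^{per}$" is meaningful for a single module rather than for a sum. Both should follow from the fact that over a Nakayama algebra all modules are uniserial and all nonzero syzygies are indecomposable (the Remark after Corollary \ref{topsoc}), together with Corollary \ref{topsoctriangle}, which pins down tops and socles at each stage; but this is the place where the argument has to be written carefully. A secondary point to check is that $k-1\geq 1$ so that the sequence $0\to\Omega^{k}(M)\to P(\Omega^{k-1}(M))\to\Omega^{k-1}(M)\to 0$ makes sense and is non-split — this holds because $k$ is odd and positive, and if $k=1$ then $\Omega^{0}(M)=M$ and the construction still goes through with $Y=M$.
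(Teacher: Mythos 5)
Your setup is right---the short exact sequence $0\to\Omega^k(M)\to P(\Omega^{k-1}(M))\to\Omega^{k-1}(M)\to 0$ does satisfy the hypotheses of Proposition \ref{temelteknik} (though the simplest reason $\Omega^k(M)$ is a quotient of a projective in $\cP^{per}$ is just that its own projective cover lies in $\cP^{per}$ by Definition \ref{mydefinition})---but your construction of $\tilde M$ has a genuine gap. If you set $\tilde M:=\faktor{\tilde P}{Y}$, then by your own computation $\Omega(\tilde M)\cong\Omega^{k-1}(M)$ and $\Omega^{2}(\tilde M)\cong\Omega^{k}(M)$, i.e.\ $\rho(\tilde M)=2$; this equals the required $k+1$ only when $k=1$. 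Your subsequent claim that ``the insertion of the extra projective $\tilde P$ shifts the indexing by exactly one,'' giving $\Omega^{k+1}(\tilde M)\cong\Omega^{k}(M)$, is inconsistent with that computation for every odd $k\geq 3$. The fallback of gluing the modified four-term sequence onto the resolution of $M$ ``up through step $k-2$'' does not type-check: the new cokernel $\faktor{\tilde P}{Y}$ is in general not isomorphic to $\Omega^{k-2}(M)$, since it is a quotient of a possibly different projective $\tilde P\in\cP^{per}$ rather than of $P_{k-2}$, so the lower part of the original resolution cannot be reused, and there is no module whose minimal resolution is the hybrid complex you describe (and if by chance $\tilde P\cong P_{k-2}$ the construction just reproduces $M$ with no shift).

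What is missing is the iteration. The paper applies Proposition \ref{temelteknik} once at the top exactly as you do, obtaining $0\to\Omega^k(M)\to P_{k-1}\to P'_{k-2}\to\tilde\Omega^{k-2}(M)\to 0$ with $\tilde\Omega^{k-2}(M)=\faktor{P'_{k-2}}{\Omega^{k-1}(M)}$, and then uses Corollary \ref{topsoc} to see that $soc\,\tilde\Omega^{k-2}(M)\cong soc\,\Omega^{k-2}(M)$, so the new quotient embeds into the \emph{same} projective $P_{k-3}$ as the old one; this yields a fresh short exact sequence to which \ref{temelteknik} applies again, and the procedure descends through the entire resolution, replacing every second projective by one from $\cP^{per}$ and finally appending one extra projective $P'$ at the bottom. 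The module $\tilde M$ is the cokernel produced at that last step, and only then does one obtain $\rho(\tilde M)=1+\rho(\tilde\Omega^{0}(M))=k+1$. Without this descent your argument only produces modules with $\rho=2$, which does not suffice for the application in Theorem \ref{even} when the maximal odd value of $\rho$ exceeds $1$.
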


\begin{proof}
Let $\ldots\rightarrow P_k\rightarrow P_{k-1}\rightarrow\dots P_1\rightarrow P_0\rightarrow M\rightarrow 0$ be the projective resolution  of $M$.
Consider the short exact sequence:
\begin{align}
0\rightarrow \Omega^{k}(M)\rightarrow P_{k-1}\rightarrow \Omega^{k-1}(M)\rightarrow 0
\end{align}
Since $\Omega^{k}(M)$ is the quotient of projective $P_{k}\in\cP^{per}$, proposition \ref{temelteknik} can be applied to get the sequence of the form \ref{4stepres}:
\begin{align}
0\rightarrow \Omega^{k}(M)\rightarrow P_{k-1}\rightarrow P'_{k-2}\rightarrow\tilde{\Omega}^{k-2}(M)\rightarrow 0
\end{align}
where $P'_{k-2}\in\cP^{per}$ and $\tilde{\Omega}^{k-2}(M)$ denotes the quotient $\faktor{P'_{k-2}}{\Omega^{k-1}(M)}$. 
We have an embedding of $\tilde{\Omega}^{k-2}(M)$ into $P_{k-3}$:
\begin{align}\label{s1}
0\rightarrow\tilde{\Omega}^{k-2}(M)\rightarrow P_{k-3}\rightarrow \tilde{\Omega}^{k-3}(M) \rightarrow 0
\end{align}
where $\tilde{\Omega}^{k-3}(M)=\faktor{P_{k-3}}{\tilde{\Omega}^{k-2}(M)}$,
because $\tilde{\Omega}^{k-2}(M)$ and $P_{k-3}$ shares the same socle: $soc(\tilde{\Omega}^{k-2}(M))\cong soc(\Omega^{k-2}(M))$ due to corollary \ref{topsoc}.
Notice that the short exact sequence \ref{s1} satisfies conditions in proposition \ref{temelteknik}, therefore there exists $P'_{k-4}\in\cP^{per}$ such that the sequence:
\begin{align}
0\rightarrow \tilde{\Omega}^{k-2}(M)\rightarrow P_{k-3}\rightarrow P'_{k-4}\rightarrow\tilde{\Omega}^{k-4}(M)\rightarrow 0
\end{align}
is exact by remark \ref{teknik1} where $P'_{k-4}\in\cP^{per}$ and $\tilde{\Omega}^{k-4}(M)$ denotes the quotient $\faktor{P'_{k-4}}{\tilde{\Omega}^{k-3}(M)}$.
We have an embedding of $\tilde{\Omega}^{k-4}(M)$ into $P_{k-5}$:
\begin{align}
0\rightarrow\tilde{\Omega}^{k-4}(M)\rightarrow P_{k-5}\rightarrow \tilde{\Omega}^{k-5}(M) \rightarrow 0
\end{align}
where $\tilde{\Omega}^{k-5}(M)=\faktor{P_{k-5}}{\tilde{\Omega}^{k-4}(M)}$
because $\tilde{\Omega}^{k-4}(M)$ and $P_{k-5}$ shares the same socle: $soc(\tilde{\Omega}^{k-4}(M))\cong soc(\Omega^{k-4}(M))$ due to corollary \ref{topsoc}.\\\\
In general, at the $i$th step we can apply proposition \ref{temelteknik} to:
\begin{align}
0\rightarrow\tilde{\Omega}^{k-2i+2}(M)\rightarrow P_{k-2i+1}\rightarrow \tilde{\Omega}^{k-2i+1}(M) \rightarrow 0
\end{align}
where ${\tilde{\Omega}^{k-2i+1}(M)}=\faktor{P_{k-2i+1}}{\tilde{\Omega}^{k-2i+2}(M)}$, to get the exact sequences:
\begin{align}
0\rightarrow \tilde{\Omega}^{k-2i+2}(M)\rightarrow P_{k-2i+1}\rightarrow P'_{k-2i}\rightarrow\tilde{\Omega}^{k-2i}(M)\rightarrow 0
\end{align} 
and then
\begin{align}
0\rightarrow\tilde{\Omega}^{k-2i}(M)\rightarrow P_{k-2i-1}\rightarrow \tilde{\Omega}^{k-2i-1}(M) \rightarrow 0
\end{align}
By the recursion, the last steps are :
\begin{align}
0\rightarrow \tilde{\Omega}^{1}(M)\rightarrow P_{0}\rightarrow\tilde{\Omega}^{0}(M)\rightarrow 0
\end{align} 
this can be completed to sequence by \ref{teknik1}:
\begin{align}
0\rightarrow \tilde{\Omega}^{1}(M)\rightarrow P_{0}\rightarrow P'\rightarrow\tilde{M}\rightarrow 0
\end{align}
Now,$\rho(\tilde{M})=1+\rho(\tilde{\Omega}^0(M))=k+1$. Since $k$ is odd, we obtained an indecomposable module $\tilde{M}$ which reaches a periodic syzygy in even number of steps i.e. $\Omega^{k+1}(\tilde{M})\in\bf\Omega^{per}$ and $\Omega^{k}(\tilde{M})\notin\bf\Omega^{per}$.
\end{proof}

\begin{corollary} Let $Z$ be an indecomposable $\Lambda$-module with $\rho(Z)=k$ where $k$ is an odd integer. Then there exists an indecomposable module $\tilde Z$ with $\rho(\tilde Z)=k+1$.
\end{corollary}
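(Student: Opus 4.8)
The plan is to read the corollary off directly from Proposition \ref{modificationofodd}, whose proof already contains all the real content; the only work is to translate between the $\Omega$-language of that proposition and the function $\rho$ of Definition \ref{rho}, and to note indecomposability.

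First I would unwind the hypothesis $\rho(Z)=k$ with $k$ odd. By Definition \ref{rho} this says $\Omega^{k}(Z)\in{\bf\Omega}^{per}$ while $\Omega^{k-1}(Z)\notin{\bf\Omega}^{per}$ (here $k\geq 1$ because $k$ is odd, so $\Omega^{k-1}$ makes sense). Thus $Z$ satisfies hypotheses (i) and (ii) of Proposition \ref{modificationofodd} with $M=Z$, and the proposition yields a module $\tilde{Z}$ with $\Omega^{k+1}(\tilde{Z})\in{\bf\Omega}^{per}$ and $\Omega^{k}(\tilde{Z})\notin{\bf\Omega}^{per}$. Inspecting its proof, $\tilde{Z}$ is produced as the nonzero cokernel $\faktor{P'}{\operatorname{im} P_0}$ of a map from the indecomposable projective cover $P_0$ of $Z$ to an indecomposable projective $P'\in\cP^{per}$; since $\Lambda$ is Nakayama, a nonzero quotient of an indecomposable uniserial module is again indecomposable, so $\tilde{Z}$ is indecomposable. (If one prefers to treat Proposition \ref{modificationofodd} as a black box, observe that $\rho$ of a direct sum is the maximum of the $\rho$'s of its summands — once a syzygy enters ${\bf\Omega}^{per}$ it remains there — so some indecomposable summand of $\tilde{Z}$ inherits the two displayed properties, and that summand may be taken as the desired module.)

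It then remains to check that $\rho(\tilde{Z})=k+1$ exactly, i.e. that no earlier syzygy of $\tilde{Z}$ already lands in ${\bf\Omega}^{per}$. This is immediate from the fact that ${\bf\Omega}^{per}$ is closed under $\Omega$ (Lemma \ref{periodic} together with indecomposability of syzygies over a Nakayama algebra, or Lemma \ref{hakem}): if $\Omega^{j}(\tilde{Z})\in{\bf\Omega}^{per}$ for some $0\leq j\leq k$, then applying $\Omega^{k-j}$ would give $\Omega^{k}(\tilde{Z})\in{\bf\Omega}^{per}$, contradicting the previous step. Hence $k+1=\min\{\,t\mid\Omega^{t}(\tilde{Z})\in{\bf\Omega}^{per}\,\}=\rho(\tilde{Z})$. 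There is essentially no obstacle beyond this minimality verification, which the closure of ${\bf\Omega}^{per}$ under $\Omega$ settles at once.
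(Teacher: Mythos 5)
Your proof is correct and follows the same route as the paper, which states this corollary without separate proof as an immediate consequence of Proposition \ref{modificationofodd} (whose own proof already records $\rho(\tilde{M})=1+\rho(\tilde{\Omega}^0(M))=k+1$). Your added checks — indecomposability of $\tilde{Z}$ as a quotient of an indecomposable projective, and the minimality of $k+1$ via closure of ${\bf\Omega}^{per}$ under $\Omega$ (Lemma \ref{periodic}) — are exactly the right bookkeeping and are consistent with the paper's argument.
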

%\subsection{Proof of the Theorem}

\subsection{Modification of projective resolutions when $\rho(X)$ is even}
To prove the theorem \ref{even} we do not need the modifications of the resolutions when $\rho(X)$ is even, anyhow we explain what we get if we apply the main techniques \ref{teknik1}, \ref{temelteknik} to the module $M$ in the following remark:
\begin{remark}\label{modificationofeven} The method we used to prove proposition \ref{modificationofodd} works for $k$ odd does not work for $k$ even. In other words, if an indecomposable module $M$ reaches to ${\bf\Omega}^{per}$ at an even number of steps i.e. $\rho(M)=2q$, then we can find a module $\tilde{M}$ such that $\rho(\tilde{M})$ is $2q,2q+2,\ldots$ i.e. even again. This is simply because of the remark \ref{teknik1}, the sequence of the form \ref{4stepres} implies the equality $\rho(\tilde{P}/Y)+2=\rho(X)$ where $X\cong\Omega^{2i}(M)$. Therefore, the even number of steps is preserved.
\end{remark}

\subsection{Modification of resolutions when projective dimension of $X$ is finite}
Now, we want to analyze the modules of the second kind, i.e. their resolutions stop at projective modules. Those have one of the below filtrations.

\begin{proposition} Let $X$ be a module of finite projective dimension. Let $P$ be projective module isomorphic to $\Omega^{\pdim X}(X)$. Possible (non-unique) filtrations of $P$ with respect to ${\bf\Omega}^{per}$ and $\bf\Delta$-modules are:

\begin{gather}\label{projtypes}
\Large{\begin{vmatrix}
   M  \\
    \Sigma
\end{vmatrix}}, 
\quad\Large{\begin{vmatrix}
   \Sigma_1  \\
    \Sigma_2
\end{vmatrix}} ,
\quad\Large{\begin{vmatrix}
   M_1 \\
   \Sigma \\
    M_2
\end{vmatrix}}
\end{gather}

where modules represented by $\Sigma$ are in ${\bf\Omega}^{per}$.
\end{proposition}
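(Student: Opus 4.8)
The strategy is to set aside the rest of the resolution of $X$ and analyse the single projective module $P:=\Omega^{\pdim X}(X)$ directly. Since $\Lambda$ is Nakayama, $P$ is uniserial, and by the description of projectives in Section \ref{projectives} every indecomposable projective has socle in $\cS$; write $soc P\cong S_{k_{2j}}$. Two facts set the stage. First, for $\pdim X\geq 1$ the module $P$ occurs in the last non-split short exact sequence $0\to P\to Q\to \Omega^{\pdim X-1}(X)\to 0$ of a minimal projective resolution of $X$, with $Q$ projective and $\Omega^{\pdim X-1}(X)$ indecomposable, non-projective and not $\Omega$-periodic (a nonzero projective cannot be $\Omega$-periodic, since $\Omega^{k}$ kills it). Second, when $\pdim X\geq 2$ the module $P$ is a $\bf\Delta$-projective by Corollary \ref{topsocdelta}, so its $\bf\Delta$-composition factors form a single contiguous arc $\Delta_{i+1}\ \cdot\!\!\!\!>\Delta_i\ \cdot\!\!\!\!>\dots\ \cdot\!\!\!\!>\Delta_{j+1}\ \cdot\!\!\!\!>\Delta_j$ in the cyclic ordering of Corollary \ref{Deltas}; the bottom layer is pinned by Lemma \ref{submodules}, since $\Delta_j$, the shortest module with socle $S_{k_{2j}}$, embeds in $P$.

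The main case distinction is whether $P\in\cP^{per}$. If $P\in\cP^{per}$, then $P$ is the projective cover of some $N\in{\bf\Omega}^{per}$ and the sequence $0\to\Omega(N)\to P\to N\to 0$ realises $P$ as an extension of the periodic module $N$ by the periodic module $\Omega(N)$, giving the second filtration type. If $P\notin\cP^{per}$, I consider the submodules of $P$ lying in ${\bf\Omega}^{per}$; these form a chain, since $P$ is uniserial. If this chain is nonempty, let $\Sigma$ be its largest member; then $M:=P/\Sigma$ is nonzero (a projective is never periodic), so $P\cong\begin{vmatrix}M\\\Sigma\end{vmatrix}$ is of the first type. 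If the chain is empty, I use $\gldim\Lambda=\infty$, so that ${\bf\Omega}^{per}\neq\emptyset$, together with Lemma \ref{presiz} and Corollary \ref{cor1} — every periodic module embeds in a periodic projective with prescribed $\bf\Delta$-socle — to exhibit an $\Omega$-periodic subquotient $\Sigma$ of $P$ lying strictly above its socle; the submodule of $P$ below $\Sigma$ is a nonzero $M_2$, the quotient above it a nonzero $M_1$, giving the third type $\begin{vmatrix}M_1\\\Sigma\\M_2\end{vmatrix}$.

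The step I expect to be the real obstacle is ruling out oscillation: showing that at most one non-periodic block flanks $\Sigma$ on each side, i.e. that above a maximal periodic subquotient of $P$ (and below it) there is no further periodic subquotient. I would prove this by $\bf\Delta$-length bookkeeping along syzygies: by Lemma \ref{newproj1} and Corollary \ref{newcor1}, periodicity of a $\bf\Delta$-module forces its $\bf\Delta$-length to be reproduced under $\Omega$ up to the complementary value, whereas the intervention of a $\bf\Delta$-projective shorter than the relevant periodic projective forces the $\bf\Delta$-length to drop and never recover; since $P$ is a $\bf\Delta$-projective whose $\bf\Delta$-composition factors form one contiguous arc, comparing the $\bf\Delta$-length of $P$ with those of the projectives in $\cP^{per}$ and using Corollary \ref{topsoc} to match tops and socles at the junctions forces the arc to split into at most three sub-arcs of the asserted shapes. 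The small cases $\pdim X\leq 1$, where $P$ need not be a $\bf\Delta$-module, are checked directly and leave only the first and third shapes, with $M$ (or $M_1,M_2$) of $\bf\Delta$-length at most one.
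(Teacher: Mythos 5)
You should first be aware that the paper itself offers no proof of this proposition: it is stated as an observation, followed only by the verbal description of the three types and the remark that the shape $\begin{vmatrix} \Sigma \\ M\end{vmatrix}$ is impossible because a periodic quotient $\Sigma$ of a projective forces $\Omega(\Sigma)\in{\bf\Omega}^{per}$. Measured against that, the first two-thirds of your argument is genuine added value and is correct: your dichotomy on whether $P\in\cP^{per}$ cleanly produces type 2 (via $0\to\Omega(N)\to P\to N\to 0$ with both ends periodic), and in the case $P\notin\cP^{per}$ with a periodic submodule $\Sigma$ it produces type 1, since if $P/\Sigma$ were periodic then $P$ would be the projective cover of a periodic module and hence lie in $\cP^{per}$. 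This last observation is exactly the paper's exclusion of $\begin{vmatrix} \Sigma \\ M\end{vmatrix}$, recovered from your case split.

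The gap is in the remaining case ($P\notin\cP^{per}$ and no periodic submodule), where you assert that $P$ contains an $\Omega$-periodic proper subquotient strictly above its socle. Lemma \ref{presiz} and Corollary \ref{cor1} do not give this: they run in the opposite direction, producing a periodic \emph{projective} containing a given periodic module, not a periodic subquotient of a given projective. Nothing in your argument rules out a fourth possibility, namely a terminal projective with no periodic subquotient whatsoever, and that existence statement is precisely what separates type 3 from the proposition being false as a trichotomy. Your proposed ``$\bf\Delta$-length bookkeeping'' to rule out oscillation addresses a non-problem --- the proposition explicitly declares the filtrations non-unique and allows $M$, $M_1$, $M_2$ to contain further periodic pieces, so a periodic subquotient above a maximal one costs nothing --- while the step that actually needs an argument, the existence of $\Sigma$ inside $P$, is only asserted. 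A workable route would be to use that $P$ occurs at the end of a resolution, so for $\pdim X\geq 2$ it is a $\bf\Delta$-projective whose $\bf\Delta$-socle $\Delta_j$ is pinned by $soc P\in\cS$, and then to locate a periodic subquotient by comparing the $\bf\Delta$-arc of $P$ with those of the projectives in $\cP^{per}$; but as written that comparison is not carried out.
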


Precisely we have:
\begin{enumerate}[label=\arabic*)]
\item The first type projective modules $P$ contain a module $\Sigma$ of ${\bf\Omega}^{per}$ as submodule and $\faktor{P}{\Sigma}$ is not in ${\bf\Omega}^{per}$.
\item The second type projective modules are nontrivial extensions of two modules $\Sigma_1$, $\Sigma_2$ of ${\bf\Omega}^{per}$ such that $\Omega(\Sigma_1)\cong\Sigma_2$.
\item The third type projective modules have the following structure: only proper subquotients can be an element of ${\bf\Omega}^{per}$. 
\end{enumerate}
Notice that if a module $\Sigma\in{\bf\Omega}^{per}$ is a quotient of a projective, then $\Omega(\Sigma)\in{\bf\Omega}^{per}$. Hence, the following type of projective is impossible: $\begin{vmatrix}
    \Sigma \\
    M
\end{vmatrix}$. Also, in the above types \ref{projtypes}, $M$ and $M_1$ (respectively, $M_2$) can have submodules (respectively, quotients) from ${\bf\Omega}^{per}$. For example, by the first type we also keep track of modules of the form $\begin{vmatrix}
   M \\
   \Sigma_1 \\
   \Sigma_2
\end{vmatrix}$ etc.

\begin{proposition}\label{tip1}
Let $X$ be a module such that $\Omega^k\left(X\right)$ is projective (i.e. $\pdim X=k$) of the first type in $\ref{projtypes}$. If $k$ is odd, then there exists a module $\tilde{X}$ which has a projective resolution with the property that $\Omega^{k+1}(\tilde{X})\in{\bf\Omega}^{per}$ and $\Omega^{k}(\tilde{X})\notin{\bf\Omega}^{per}$.
\end{proposition}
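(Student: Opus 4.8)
The strategy mirrors the proof of Proposition \ref{modificationofodd}: we start with the given projective resolution of $X$ and repeatedly apply the main techniques (Proposition \ref{temelteknik} and Remark \ref{teknik1}) to splice in periodic projectives at the odd-indexed spots, thereby producing a new module $\tilde X$ whose resolution reaches ${\bf\Omega}^{per}$ one step later. The extra input beyond Proposition \ref{modificationofodd} is the hypothesis that $\Omega^k(X)=P$ is a \emph{first type} projective in \eqref{projtypes}, i.e. $P$ contains a submodule $\Sigma\in{\bf\Omega}^{per}$ with $P/\Sigma\notin{\bf\Omega}^{per}$. The point is that a first type projective, being an element of $\cP^{per}$ up to the relevant top/socle data (its socle lies in $\cS$ and it has a $\Delta$-submodule in ${\bf\Omega}^{per}$), behaves exactly like the periodic projectives $P'_k\in\cP^{per}$ that drive the recursion in Proposition \ref{modificationofodd}.

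\textbf{Key steps.} First I would write down the resolution
$$0\rightarrow \Omega^k(X)=P\rightarrow P_{k-1}\rightarrow \Omega^{k-1}(X)\rightarrow 0$$
and observe that since $P$ is a first type projective it contains $\Sigma\in{\bf\Omega}^{per}$ as a submodule, so by Lemma \ref{presiz} and Corollary \ref{cor1} there is a genuine periodic projective $\hat P\in\cP^{per}$ with the same socle as $P$; consequently $\Omega^{k-1}(X)$ is a quotient of a module behaving like an element of $\cP^{per}$ and the hypothesis of Proposition \ref{temelteknik} is met at this initial stage. Then I would run the same downward recursion as in Proposition \ref{modificationofodd}: at the $i$th stage apply Proposition \ref{temelteknik} to the embedding $0\rightarrow\tilde\Omega^{k-2i+2}(M)\rightarrow P_{k-2i+1}\rightarrow\tilde\Omega^{k-2i+1}(M)\rightarrow 0$ to obtain a four-term exact sequence with a periodic projective $P'_{k-2i}\in\cP^{per}$ inserted, then embed $\tilde\Omega^{k-2i}(M)$ into $P_{k-2i-1}$ using that they share a socle (Corollary \ref{topsoc}), and continue. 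The recursion terminates at step $0$ with a sequence
$$0\rightarrow\tilde\Omega^1(X)\rightarrow P_0\rightarrow P'\rightarrow\tilde X\rightarrow 0,$$
and tracking $\rho$ along the way gives $\rho(\tilde X)=1+\rho(\tilde\Omega^0(X))=k+1$, so that $\Omega^{k+1}(\tilde X)\in{\bf\Omega}^{per}$ while $\Omega^k(\tilde X)\notin{\bf\Omega}^{per}$ because $k$ is odd and each splice adds exactly two honest steps while relocating the periodic tail by one.

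\textbf{Main obstacle.} The delicate point is the very first application of the main technique: in Proposition \ref{modificationofodd} one uses that $\Omega^k(M)$ is literally a quotient of a projective in $\cP^{per}$, whereas here $\Omega^k(X)=P$ is itself projective but only of first type, so I must verify carefully that $P$ — or rather the periodic projective $\hat P\in\cP^{per}$ with the same socle supplied by Corollary \ref{cor1} — can play that role, i.e. that the cokernel $\hat P/\Sigma$ still embeds correctly and that the uniseriality arguments of Proposition \ref{temelteknik} go through unchanged. Once this initial step is justified, the rest is a routine repetition of the recursion in Proposition \ref{modificationofodd}, and the parity bookkeeping ($k$ odd $\Rightarrow$ $k+1$ even, with the ``$\rho$ preserved up to $+2$'' phenomenon of Remark \ref{modificationofeven} now working in our favor to push $\rho$ to $k+1$) is immediate.
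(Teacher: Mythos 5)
There is a genuine gap, and it sits exactly where you flag your ``main obstacle'': the very first application of Proposition \ref{temelteknik}. You propose to apply it to $0\to\Omega^k(X)\to P_{k-1}\to\Omega^{k-1}(X)\to 0$ with the Type 1 projective $P=\Omega^k(X)$ in the submodule slot, justified by a periodic projective $\hat P\in\cP^{per}$ sharing the \emph{socle} of $P$. But what the proof of Proposition \ref{temelteknik} actually extracts from the hypothesis ``the submodule term is a quotient of some $P'\in\cP^{per}$'' is a periodic module with the same \emph{top} as that term; a Type 1 projective has top equal to the top of its non-periodic part $M\notin{\bf\Omega}^{per}$, and no periodic module need have that top (if one did, $P$ would admit a Type 2 filtration). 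So the hypothesis cannot be arranged, and matching socles via Corollary \ref{cor1} does not repair it. Worse, even if this first step were granted, the recursion you describe would end with $\Omega^{k+1}(\tilde X)\cong\Omega^k(X)$, which is \emph{projective} and hence not in ${\bf\Omega}^{per}$; your bookkeeping $\rho(\tilde X)=1+\rho(\tilde\Omega^0(X))=k+1$ presupposes that the modified resolution already reaches ${\bf\Omega}^{per}$ after $k$ steps, whereas the module $\tilde\Omega^0(X)$ you build has finite projective dimension, so $\rho$ is not even defined for it.

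The missing idea is the paper's opening move: use the internal structure of the terminal projective to manufacture the seed sequence $0\to\Sigma\to \Omega^k(X)\to \Omega^k(X)/\Sigma\to 0$. Here $\Sigma\in{\bf\Omega}^{per}$ is a quotient of its own projective cover, which lies in $\cP^{per}$ by Definition \ref{mydefinition}; the middle term is projective; and the cokernel $\Omega^k(X)/\Sigma$ is not in ${\bf\Omega}^{per}$ by the definition of Type 1. This sequence genuinely satisfies the hypotheses of Proposition \ref{temelteknik}, and it is the one to feed into the downward recursion of Proposition \ref{modificationofodd}: since $top(\Omega^k(X)/\Sigma)\cong top(\Omega^k(X))$, Corollary \ref{topsoc} identifies the socle of the new cokernel with that of $\Omega^{k-1}(X)$, so the next stage splices into $P_{k-2}$ and the recursion proceeds as before. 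Run from this seed, it produces $\tilde X$ with $\Omega^{k+1}(\tilde X)\cong\Sigma\in{\bf\Omega}^{per}$ and $\Omega^{k}(\tilde X)\cong\Omega^k(X)/\Sigma\notin{\bf\Omega}^{per}$, which is the assertion; your subsequent recursion and parity count are fine once this initial step is replaced.
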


\begin{proof}
We will show existence of module $\tilde{X}$ by using resolution of $X$. Since $\Omega^k(X)$ ($\cong PX_k$) has the shape of Type 1 in \ref{projtypes} the following sequence exists:
\begin{align}
0\rightarrow \Sigma \rightarrow PX_k \rightarrow \faktor{PX_k}{\Sigma} \rightarrow 0
\end{align}
$\Sigma$ is in ${\bf\Omega}^{per}$ hence it is a quotient of a projective of $\cP^{per}$. By the structure of type 1 projectives, $M$ is not in ${\bf\Omega}^{per}$. So we can apply the main techniques described in propositions \ref{temelteknik} and \ref{teknik1} to create module $\tilde{X}$ satisfying hypothesis in the proposition.
\end{proof} 

\begin{proposition}\label{tip2}
Let $X$ be a module such that $\Omega^k\left(X\right)$ is projective (i.e. $\pdim X=k$) of the second type in $\ref{projtypes}$. If $k$ is odd, then there exists a module $\tilde{X}$ which has a projective resolution with the property that $\Omega^{k+1}(\tilde{X})\in{\bf\Omega}^{per}$ and $\Omega^{k}(\tilde{X})\notin{\bf\Omega}^{per}$.
\end{proposition}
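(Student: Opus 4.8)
The strategy is the same as in Proposition \ref{tip1}, only now we must first ``split off'' a periodic summand from the terminal projective before the main techniques apply. Recall that a type-2 projective $PX_k \cong \Omega^k(X)$ is a nontrivial extension
\[
0 \rightarrow \Sigma_2 \rightarrow PX_k \rightarrow \Sigma_1 \rightarrow 0,
\]
where $\Sigma_1, \Sigma_2 \in {\bf\Omega}^{per}$ and $\Omega(\Sigma_1) \cong \Sigma_2$. So both the submodule and the quotient of $PX_k$ are already periodic; in the naive resolution the syzygy $\Omega^{k+1}(X) = 0$ and $\rho$ is never reached. The point is to build a \emph{different} module $\tilde{X}$ whose resolution passes through $PX_k$ but then continues one more step into ${\bf\Omega}^{per}$.

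\textbf{Key steps.} First I would use Corollary \ref{cor1}: since $\Sigma_2 \in {\bf\Omega}^{per}$, the module $\Delta_i \cong {\bf\Delta}$-$\operatorname{soc}\Sigma_2$ is the ${\bf\Delta}$-socle of some $P \in \cP^{per}$, and by Lemma \ref{presiz} $\Sigma_2$ itself embeds as a proper submodule of some $\tilde{P}_0 \in \cP^{per}$ sharing its socle. Dually, since $\Sigma_1$ is a quotient of the periodic projective $PX_k$ itself (indeed $PX_k \in \cP^{per}$ because it is type 2, being a nonsplit extension of two periodic modules glued along a syzygy), we have $\operatorname{top}\Sigma_1 = \operatorname{top}PX_k$. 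Now consider the short exact sequence obtained from the embedding $\Sigma_2 \hookrightarrow \tilde{P}_0$:
\[
0 \rightarrow \Sigma_2 \rightarrow \tilde{P}_0 \rightarrow \faktor{\tilde{P}_0}{\Sigma_2} \rightarrow 0.
\]
Since $\Sigma_2 \in {\bf\Omega}^{per}$ and $\tilde{P}_0 \in \cP^{per}$, the cokernel $Y := \tilde{P}_0/\Sigma_2$ satisfies $\Omega(Y) \cong \Sigma_2 \in {\bf\Omega}^{per}$, i.e. $\rho(Y) = 1$. The resolution of $X$ up to $\Omega^{k}(X) = PX_k$ gives, after peeling back one step, a sequence $0 \rightarrow PX_k \rightarrow P_{k-1} \rightarrow \Omega^{k-1}(X) \rightarrow 0$; I would graft $Y$ in place of $PX_k$. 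Concretely, since $PX_k$ and $\tilde{P}_0$ are both projective with $\Sigma_2$ as a submodule, and since $PX_k$ maps onto $\Sigma_1$ while $Y$ has $\Sigma_2$ as its syzygy, the module $\tilde{X}$ is assembled by running the same alternating construction as in Proposition \ref{modificationofodd}: at each pair of steps one replaces a genuine syzygy by a quotient of a module in $\cP^{per}$, using Proposition \ref{temelteknik} and Remark \ref{teknik1}, which adds exactly two to the index while preserving $\rho$ modulo the reassignment. The net effect is that $\tilde{X}$ has a projective resolution of length exactly $k+1$ terms before reaching ${\bf\Omega}^{per}$, with $\Omega^k(\tilde{X}) \notin {\bf\Omega}^{per}$ (it equals the nonperiodic quotient $Y$) and $\Omega^{k+1}(\tilde{X}) \cong \Sigma_2 \in {\bf\Omega}^{per}$. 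Since $k$ is odd, $k+1$ is even, as desired.

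\textbf{Main obstacle.} The delicate point is verifying that the terminal projective $PX_k$ of type 2 really lies in $\cP^{per}$ and has the correct top and socle to allow the grafting — in other words, that a type-2 projective is genuinely a ``periodic projective'' in the sense of Definition \ref{mydefinition}. This requires checking that $PX_k$ is the projective cover of a module in ${\bf\Omega}^{per}$; since $PX_k$ is a nonsplit extension of $\Sigma_2 = \Omega(\Sigma_1)$ by $\Sigma_1$ with $\Sigma_1 \in {\bf\Omega}^{per}$, its top is $\operatorname{top}\Sigma_1$, and one must argue that the projective cover of $\Sigma_1$ is exactly $PX_k$ (not a longer projective), which follows because the syzygy of $\Sigma_1$ inside its projective cover is $\Sigma_2$ and $\Sigma_2$ already sits inside $PX_k$ with the right length. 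Once this identification is in hand, the alternating modification machinery of Proposition \ref{modificationofodd} applies essentially verbatim, and the parity bookkeeping $\rho(\tilde{P}/Y) + 2 = \rho(X\text{-analogue})$ from Remark \ref{modificationofeven} (read in the opposite direction) closes the argument.
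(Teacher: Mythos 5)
Your overall strategy --- peel the periodic submodule $\Sigma_2$ off the terminal projective and then run the alternating machinery of Proposition \ref{modificationofodd} --- is the right instinct, but the short exact sequence you start from does not do what you need. You take $\tilde{P}_0\in\cP^{per}$ containing $\Sigma_2$ via Lemma \ref{presiz} and set $Y:=\faktor{\tilde{P}_0}{\Sigma_2}$, claiming $\rho(Y)=1$ so that $Y$ can serve as the non-periodic module $\Omega^{k}(\tilde X)$. But the projective produced by Lemma \ref{presiz} is the projective cover of $\Omega^{p-1}(\Sigma_2)$, where $\Omega^{p}(\Sigma_2)\cong\Sigma_2$; hence $Y\cong\Omega^{p-1}(\Sigma_2)\in{\bf\Omega}^{per}$ and $\rho(Y)=0$, not $1$. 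Your construction therefore lands \emph{inside} ${\bf\Omega}^{per}$ at step $k$, which is exactly what the proposition forbids. A second, related problem is the phrase ``graft $Y$ in place of $PX_k$'': $Y$ and $PX_k$ need not have the same top or socle, so one cannot substitute one for the other in $0\to PX_k\to P_{k-1}\to\Omega^{k-1}(X)\to 0$ and still have an exact sequence.

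The paper avoids both problems by keeping the next projective $PX_{k-1}$ of the original resolution and shrinking only the submodule: it uses $0\to\Sigma\to PX_{k-1}\to\faktor{PX_{k-1}}{\Sigma}\to 0$ with $\Sigma=\Sigma_2\subsetneq PX_k\subsetneq PX_{k-1}$. Here $\faktor{PX_{k-1}}{\Sigma}$ is automatically non-periodic: it strictly contains $\Sigma_1=\faktor{PX_k}{\Sigma}$ and has the same first syzygy $\Sigma$ as $\Sigma_1$, so if it were periodic the syzygy map would fail to be injective on ${\bf\Omega}^{per}$, contradicting Lemma \ref{hakem}. Since $\Sigma$ is a quotient of its projective cover, which lies in $\cP^{per}$, Proposition \ref{temelteknik} and Remark \ref{teknik1} apply to this sequence, and the alternating construction of Proposition \ref{modificationofodd} then yields $\tilde X$ with $\Omega^{k+1}(\tilde X)\cong\Sigma\in{\bf\Omega}^{per}$ and $\Omega^{k}(\tilde X)\cong\faktor{PX_{k-1}}{\Sigma}\notin{\bf\Omega}^{per}$. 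To salvage your version you would have to replace $\tilde{P}_0$ by a projective whose quotient by $\Sigma_2$ is provably non-periodic, and $PX_{k-1}$ is precisely such a choice.
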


\begin{proof}
We will show existence of module $\tilde{X}$ by using resolution of $X$. Since $\Omega^k(X)$ ($\cong PX_k$) has the shape of Type 2 in \ref{projtypes} the following sequence exists:
\begin{align}
0\rightarrow \Sigma \rightarrow PX_{k-1} \rightarrow \faktor{PX_{k-1}}{\Sigma} \rightarrow 0
\end{align}
where $\Omega^k(X)\hookrightarrow  PX_{k-1}$.

$\Sigma$ is in ${\bf\Omega}^{per}$ hence it is a quotient of a projective of $\cP^{per}$. By the structure of $PX_{k-1}$, $\faktor{PX_{k-1}}{\Sigma}$ is not in ${\bf\Omega}^{per}$. So we can apply the main techniques described in propositions \ref{temelteknik} and \ref{teknik1} to create module $\tilde{X}$ satisfying hypothesis in the proposition.
\end{proof} 

\begin{proposition}\label{tip3}
Let $X$ be a module such that $\Omega^k\left(X\right)$ is projective (i.e. $\pdim X=k$) of the third type in $\ref{projtypes}$. If $k$ is odd,  then there exists a module $\tilde{X}$ which has a projective resolution with the property that $\Omega^{k+1}(\tilde{X})\notin{\bf\Omega}^{per}$.
\end{proposition}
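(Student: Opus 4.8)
The situation here differs from Propositions \ref{tip1} and \ref{tip2} in an essential way: in a type 3 projective $PX_k$ the only members of ${\bf\Omega}^{per}$ that occur are \emph{proper} subquotients, so $soc\,PX_k$ and $top\,PX_k$ cannot be used to build submodule/quotient relations with periodic projectives from $\cP^{per}$ in the way that the main techniques \ref{temelteknik}, \ref{teknik1} require. Consequently we cannot push the periodic part all the way to the top of the resolution in even parity; instead the goal is weaker — we only want $\Omega^{k+1}(\tilde X)\notin{\bf\Omega}^{per}$, i.e. the modification produces a module whose resolution has not yet stabilized at step $k+1$. The plan is to exploit exactly the third-type structure $\begin{vmatrix} M_1 \\ \Sigma \\ M_2\end{vmatrix}$: peel off a suitable submodule or quotient so that the resulting short exact sequence still has $M_1$ (non-periodic) as one end, and then run the same alternating construction as in \ref{modificationofodd}.

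\textbf{Key steps.}
First I would write $P:=PX_k\cong\Omega^k(X)$ with its type 3 filtration and isolate the bottom part: there is a short exact sequence $0\to \Sigma'\to P\to M_1\to 0$ where $\Sigma'$ is the submodule of $P$ consisting of everything below $top\,P$ (so $\Sigma'$ has the shape $\begin{vmatrix}\Sigma\\ M_2\end{vmatrix}$ up to the relevant filtration) and $M_1\notin{\bf\Omega}^{per}$. Second, I would note that $\Sigma'$, having $\Sigma\in{\bf\Omega}^{per}$ in its socle-region, need not itself be a quotient of a periodic projective, so the hypotheses of Proposition \ref{temelteknik} may fail for this sequence directly; instead I would cut $P$ along the top of $\Sigma$ rather than along $top\,P$. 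That gives $0\to \Sigma''\to P\to Q\to 0$ with $\Sigma''\in{\bf\Omega}^{per}$ (being a submodule of $P$ whose socle equals $soc\,P=soc\,\Sigma''$, consecutive with the periodic piece) — and here $\Sigma''$ \emph{is} a quotient of a projective in $\cP^{per}$ by Lemma \ref{presiz} and Corollary \ref{cor1} — while $Q\cong\begin{vmatrix}M_1\\ \vdots\end{vmatrix}$ has $top\,Q\cong top\,P\cong top M_1$, so $Q\notin{\bf\Omega}^{per}$. Third, I would splice this into the resolution of $X$ exactly as in Proposition \ref{modificationofodd}: apply the main technique \ref{temelteknik} together with Remark \ref{teknik1} to the sequence $0\to \Sigma''\to P=PX_k\to Q\to 0$, obtaining $0\to \Sigma''\to PX_k\to P'_{k-2}\to \tilde Q^{k-2}\to 0$ with $P'_{k-2}\in\cP^{per}$, and then continue the alternating embed-then-cover recursion down through $P_{k-3},P_{k-5},\dots,P_0$, using Corollary \ref{topsoc} at each stage to match socles, until it terminates in $0\to\tilde\Omega^1\to P_0\to P'\to\tilde X\to 0$. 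Since we inserted one extra projective term relative to the resolution of $X$ and $k$ is odd, $\rho$-counting gives $\Omega^{k+1}(\tilde X)$ equal to a cover-cokernel at the bottom, which by construction carries the non-periodic top $top M_1$ and hence $\Omega^{k+1}(\tilde X)\notin{\bf\Omega}^{per}$.

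\textbf{Main obstacle.}
The delicate point is choosing the correct ``cut'' of the type 3 projective $P$ so that the lower piece genuinely lies in ${\bf\Omega}^{per}$ \emph{and} is a quotient of a projective in $\cP^{per}$ — the note after \eqref{projtypes} warns that $\begin{vmatrix}\Sigma\\ M\end{vmatrix}$ is impossible as a projective but here $\Sigma''$ is only a submodule of $P$, not $P$ itself, so one must check that the uniserial segment below $top\,\Sigma$ is honestly periodic (equivalently, that its socle, which is $soc\,\Sigma''\cong soc\,P$, is the socle of some periodic projective, which is exactly Corollary \ref{cor1}). Once that identification is secure, the rest is the same bookkeeping as in Proposition \ref{modificationofodd}, and the weaker conclusion (only $\Omega^{k+1}(\tilde X)\notin{\bf\Omega}^{per}$, with no claim about $\Omega^{k}(\tilde X)$) is precisely what one can extract, because the non-periodic ``$M_1$'' part sits at the top and survives one more syzygy step but we lose control of whether step $k$ is already non-periodic.
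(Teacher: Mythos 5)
Your cut of the type 3 projective is in the wrong place, and the two key claims you make about it contradict each other. You take the submodule $\Sigma''$ of $PX_k$ sitting below the top of $\Sigma$, i.e.\ $\Sigma''\cong\begin{vmatrix}\Sigma\\ M_2\end{vmatrix}$, and assert $\Sigma''\in{\bf\Omega}^{per}$. That cannot be right: if this submodule were $\Omega$-periodic, then $PX_k\cong\begin{vmatrix}M_1\\ \Sigma''\end{vmatrix}$ would be a projective of the first type in \ref{projtypes}, not the third, and Proposition \ref{tip1} would already cover it; in the genuinely third-type situation $\Sigma''\notin{\bf\Omega}^{per}$. Moreover, the construction of Proposition \ref{modificationofodd} places the chosen \emph{submodule} of the initial short exact sequence at depth $k+1$ of the new resolution, so with your cut $\Omega^{k+1}(\tilde X)\cong\Sigma''$, not a module ``carrying the top $top\,M_1$'' (that module is $\Omega^{k}(\tilde X)$). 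Taken at face value, your two claims combine to give $\Omega^{k+1}(\tilde X)\cong\Sigma''\in{\bf\Omega}^{per}$, which is the negation of what the proposition asserts. The citations of Lemma \ref{presiz} and Corollary \ref{cor1} are also off target: those produce periodic projectives containing a periodic module as a \emph{submodule}, whereas the hypothesis of Proposition \ref{temelteknik} requires the left-hand term of the sequence to be a \emph{quotient} of a member of $\cP^{per}$.

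The paper cuts one layer lower: it uses $0\to M_2\to PX_{k}\to \faktor{PX_{k}}{M_2}\to 0$, with $M_2$ alone as the submodule. The hypothesis of Proposition \ref{temelteknik} is verified by observing that $M_2$ is a proper subquotient of the projective cover of $\Omega(\Sigma)$ (their tops agree, both being consecutive to $soc\,\Sigma$ by Corollary \ref{topsoc}), and this cover lies in $\cP^{per}$ because $\Omega(\Sigma)$ is periodic. Running the alternating construction then yields $\Omega^{k+1}(\tilde X)\cong M_2$, which is neither in ${\bf\Omega}^{per}$ nor projective, and one is done. Your route could in principle be repaired --- $\Sigma''$ has the same top as $\Sigma$, hence is a quotient of the projective cover of $\Sigma$, which is in $\cP^{per}$, and $\Sigma''\notin{\bf\Omega}^{per}$ as noted above, so $\Omega^{k+1}(\tilde X)\cong\Sigma''$ would also satisfy the conclusion --- but as written both the verification of the hypothesis of Proposition \ref{temelteknik} and the identification of $\Omega^{k+1}(\tilde X)$ are incorrect.
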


\begin{proof}
We will show existence of module $\tilde{X}$ by using resolution of $X$. Since $\Omega^k(X)$ ($\cong PX_k$) has the shape of Type 3 in \ref{projtypes} the following sequence exists:
\begin{align}
0\rightarrow M_2 \rightarrow PX_{k}\cong \begin{vmatrix}
   M_1 \\
   \Sigma \\
    M_2
\end{vmatrix} \rightarrow \faktor{PX_{k}}{M_2}\cong \begin{vmatrix}
   M_1 \\
   \Sigma \end{vmatrix} \rightarrow 0
\end{align}

$M_2$ is a quotient of some projective in $\cP^{per}$ since it is proper subquotient of projective cover of $\Omega(\Sigma)$. By the structure of $PX_{k}$, $\faktor{PX_{k}}{M_2}$ is not in ${\bf\Omega}^{per}$, since $M_1\notin{\bf\Omega}^{per}$. So we can apply the main techniques described in propositions \ref{temelteknik} and \ref{teknik1} to create module $\tilde{X}$ such that $\Omega^{k+1}(\tilde{X})\cong M_2$ which is neither in ${\bf\Omega}^{per}$ nor projective module. Therefore $\rho(\tilde{X})>\pdim X$.
\end{proof} 

\subsection{Proof of the Theorem (A)}
Before proving the main theorem (A), we want to analyze one remaining case separately: $r=1$. So far, we studied all cyclic Nakayama algebras with REL, and number of relations were $\geq 2$.
\begin{proposition}\label{onerelation}
Let $\Lambda$ be a cyclic non self injective Nakayama algebra with $N$ nodes described by one monomial relation $\boldsymbol\alpha_{k_2}.\ldots\boldsymbol\alpha_{k_1}=0$. If the module $D\cong\begin{vmatrix}
    S_{k_{2}+1} \\
    \vdots  \\
    S_{k_{2}}
\end{vmatrix}$ of length $N$ (notice that it is a $\bf\Delta$-module) is a projective module, then $\gldim\Lambda=2$. Otherwise, it is of infinite projective dimension and $\varphi\dim\Lambda=2$.
\end{proposition}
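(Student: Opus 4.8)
The plan is to handle the one-relation case directly, since with a single relation $\boldsymbol\alpha_{k_2}\ldots\boldsymbol\alpha_{k_1}=0$ there is exactly one class of projectives, one socle simple $S_{k_2}$, and one module $\Delta_1 = D$ of length $N$ with $\operatorname{top} D \cong S_{k_2+1}$ and $\operatorname{soc} D \cong S_{k_2}$. First I would dispose of the case where $D$ is projective: then $D$ is the unique projective-injective, every indecomposable projective is a submodule of $D$, and one checks that the syzygy of any module is either zero or a submodule of a projective whose own syzygy is zero; concretely, $\Omega^2$ of any indecomposable lands in $\operatorname{add}$ of the submodules of the projectives, all of which have projective dimension $\leq 1$ here because $D$ being projective of length $N$ forces the relation to behave like a "maximal" one. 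So $\gldim\Lambda \leq 2$, and since $\Lambda$ is not self-injective it is exactly $2$. (This is really the content of the Theorem preceding Section 3: if a $\Delta$-module is projective then $\gldim\Lambda<\infty$, combined with $\varphi\dim = \gldim$ in the finite case and Theorem \ref{thm1}(ii) ruling out $\gldim = 1$ — actually one should just cite that $\gldim \neq 1$ since $\Lambda$ is not semisimple, and $\gldim \leq 2$ from the filtration argument, giving $\gldim = 2$.)

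For the main case, suppose $D$ is \emph{not} projective. Then $D$ has infinite projective dimension: by Corollary \ref{topsocdelta} every $\Omega^i(X)$ with $i\geq 2$ is a $\bf\Delta$-module, and with $r=1$ a $\bf\Delta$-module is just an iterated self-extension of $\Delta_1 = D$; I would show $\Omega(D) \cong D$ up to the usual syzygy shift, i.e. that $D$ is $\Omega$-periodic, by computing the projective cover $P$ of $D$ (which has $\operatorname{top} \cong S_{k_2+1} \in \cS'$) and observing $\operatorname{soc} P \cong S_{k_2}$ by Section \ref{projectives}, so $\operatorname{soc}\Omega(D) \cong S_{k_2} \in \cS$ and $\operatorname{top}\Omega(D)\in\cS'$ by Proposition \ref{deltafilt}, forcing $\Omega(D)$ to be a $\bf\Delta$-module with $\bf\Delta$-length $1$, hence $\Omega(D)\cong D$. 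Thus ${\bf\Delta} = \{D\} \subseteq {\bf\Omega}^{per}$, and by Theorem \ref{thm1}(iii) we get $\varphi\dim\Lambda = 2$.

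So the skeleton is: (1) translate "$D$ projective" into "$\gldim = 2$" via the earlier Theorem on $\bf\Delta$-modules being projective, plus $\gldim \neq 1$; (2) when $D$ is not projective, show $D$ is $\Omega$-periodic — this is where Proposition \ref{deltafilt}, Corollary \ref{topsocdelta} and the uniqueness of the minimal module with given top in $\cS'$ and socle in $\cS$ (Proposition \ref{topsocilk}) do the work — and then quote Theorem \ref{thm1}(iii). The main obstacle I expect is step (2): one must be careful that $\Omega(D)$ is genuinely indecomposable of $\bf\Delta$-length exactly $1$ (not $0$, which would make $D$ projective, contradicting the hypothesis, and not $\geq 2$, which is excluded because the projective cover $P$ of $D$ has length $<2N$ — indeed $P$ is the longest path starting at $k_2+1$, which is shorter than $2N$ since there is a relation, so $\Omega(D)$ has length $<N$... wait, that gives $\bf\Delta$-length $0$). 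The delicate point is therefore the length bookkeeping: $\operatorname{length}(P) = $ (length of longest non-zero path from $k_2+1$), and one needs $\operatorname{length}(P) = 2N - \operatorname{length}(D)$-type identities to conclude $\Omega(D)$ has the same length as $D$; this is exactly where the non-projectivity of $D$ is used, and it should be spelled out rather than waved through.
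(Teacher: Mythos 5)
Your skeleton is essentially the paper's: split on whether $D$ is projective, use that with $r=1$ every indecomposable projective has socle $S_{k_2}$ and every second syzygy is a ${\bf\Delta}$-module with top $S_{k_2+1}$, and in the non-projective case reduce to $D\in{\bf\Omega}^{per}$ so that the $\varphi\dim=2$ criterion of Theorem \ref{thm1}(iii) applies. (Minor point in case (1): ``not semisimple'' only rules out $\gldim=0$; what rules out $\gldim=1$ is that a cyclic Nakayama algebra with a relation is not hereditary. The paper instead gets $\gldim=2$ directly by noting that $\Omega^2(M)$ is a quotient of its projective cover $P_{k_2+1}=D$ with $soc\,\Omega^2(M)\cong S_{k_2}\cong soc\,D$, hence $\Omega^2(M)\cong D$ is projective.)

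The genuine gap is in case (2), and you half-detect it yourself. The claim $\Omega(D)\cong D$ is false in general, and the length bookkeeping you attempt at the end rests on the false premise that the projective cover $P_{k_2+1}$ of $D$ has length less than $2N$. Since $P_{k_2+1}$ has top $S_{k_2+1}\in\cS'$ and socle $S_{k_2}\in\cS$, its length is $mN$ for some $m\geq 1$; $D$ not projective forces $m\geq 2$, but nothing forces $m=2$: a single relation that wraps around the cycle several times gives $m\geq 3$. Concretely, take $N=3$ and the one relation given by the path $1\to2\to3\to1\to2\to3\to1\to2$ (seven arrows, $k_1=k_2=1$); then $D$ has length $3$ while $P_2$ has length $9=3N$, so $\Omega(D)$ has length $6=2N$ and is \emph{not} isomorphic to $D$. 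The correct statement, which rescues your argument, is that $\Omega^2(D)\cong D$: since every module with top $S_{k_2+1}$ has projective cover $P_{k_2+1}$ of length $mN$, the module $\Omega(D)$ has length $(m-1)N$, top $S_{k_2+1}$ and socle $S_{k_2}$, and then $\Omega^2(D)$ has length $mN-(m-1)N=N$ with the same top and socle, hence is isomorphic to $D$ by uniqueness of the length-$N$ uniserial with top $S_{k_2+1}$ (Proposition \ref{topsocilk}). So $D$ is $\Omega$-periodic of period dividing $2$, ${\bf\Delta}=\{D\}\subseteq{\bf\Omega}^{per}$, and Theorem \ref{thm1}(iii) gives $\varphi\dim\Lambda=2$ as you intended; this periodicity statement (for $D$ and its self-extensions) is also what the paper's own proof establishes, via the cyclic ordering rather than the length count.
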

\begin{proof}
Since there is one relation, this forces that there is one class \ref{classes} of projective modules, and socle is $S_{k_2}$ upto isomorphism. Let $M$ be arbitrary indecomposable nonprojective module. Consider the resolution:
\[\xymatrixcolsep{10pt}
\xymatrix{
P\Omega^2(M)\ar[rd] \ar[rr] && P\Omega(M)\ar[rd] \ar[rr] && PM \ar[rr] && M  \\
&\Omega^2(M)\ar[ru]& &\Omega(M) \ar[ru] & & 
}\]

All projectives have socle $S_{k_2}$, by corollary \ref{topsoc}, top and socle of $\Omega^2(M)$ are $S_{k_2+1}$ and $S_{k_2}$ respectively. If $D$ is projective, $D\cong P\Omega^2(M)$, hence global dimension is $2$. If $D$ is not projective, there exists a projective cover $P$ of $\Omega^2(M)$ such that it is some extension of $D$. The reason is the cyclic ordering \ref{c-ordering} of simples:
\begin{align*}
S_{k_1+1}\cdot\!\!\!\!>\cdots \cdot\!\!\!\!> S_{k_1} \cdot\!\!\!\!> S_{k_2+1}
\end{align*}
 The outer $S_{k_2+1}$ stands for top of $D$. Since there are $N$ modules between $S_{k_1+1}$ and $S_{k_1}$, this forces that there exists a projective module which has top $S_{k_2+1}$ and longer than $D$. Hence $D$ and its possible extensions are in ${\bf\Omega}^{per}$. This forces that $\varphi\dim\Lambda=2$.
\end{proof}

Now we prove the following:

\begin{theorem}\label{even}
If $\Lambda$ is a cyclic Nakayama algebra of infinite global dimension, then its $\varphi$-dimension is always an even number.
\end{theorem}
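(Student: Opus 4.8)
The plan is to reduce the statement about $\varphi\dim\Lambda$ to a statement about the invariant $\rho$, and then exploit the ``odd $\Rightarrow$ can be made $+1$'' modification lemmas already proved. First I would recall that by Lemma \ref{phiNakayama} and Proposition \ref{nonperiodic}, the number $\varphi\dim\Lambda$ is exactly the first step at which $\Omega^t(M^\dagger)$ becomes $M^{per}$; equivalently, $\varphi\dim\Lambda = \max\{\,\rho(X) \mid X \text{ indecomposable},\ \pdim X = \infty\,\}$ together with the finite projective dimensions that occur — and, since $\findim\Lambda\le\varphi\dim\Lambda$ by Remark \ref{fidimfindim}, really $\varphi\dim\Lambda$ equals the supremum over \emph{all} indecomposables $X$ of the number of steps it takes the resolution of $X$ either to reach $\bf\Omega^{per}$ or to reach a projective. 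So I would set $d := \varphi\dim\Lambda$, take an indecomposable $X$ realizing this value, and argue that $d$ cannot be odd: if $d$ were odd, I would produce another indecomposable whose relevant step-count is $d+1 > d$, contradicting maximality.

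The case analysis for ``$d$ odd leads to a contradiction'' is exactly what Propositions \ref{modificationofodd}, \ref{tip1}, \ref{tip2}, \ref{tip3} were set up for. Concretely: the module $X$ realizing $d$ has either infinite or finite projective dimension. If $\pdim X = \infty$, then $d = \rho(X)$ (after discarding the part of the resolution before it stabilizes), and $\Omega^d(X)\in\bf\Omega^{per}$ while $\Omega^{d-1}(X)\notin\bf\Omega^{per}$; since $d$ is odd, Proposition \ref{modificationofodd} gives $\tilde X$ with $\rho(\tilde X) = d+1$, hence $\varphi\dim\Lambda \ge d+1$, contradiction. If $\pdim X = d < \infty$, then $\Omega^d(X)$ is projective, and by the trichotomy of Proposition (the one classifying \ref{projtypes}) it is of type 1, 2, or 3; Propositions \ref{tip1}, \ref{tip2}, \ref{tip3} respectively then produce $\tilde X$ with $\Omega^{d+1}(\tilde X)\in\bf\Omega^{per}$ and $\Omega^d(\tilde X)\notin\bf\Omega^{per}$ (or, in the type-3 case, $\Omega^{d+1}(\tilde X)$ neither periodic nor projective, so $\rho(\tilde X) > d$), in each case forcing $\varphi\dim\Lambda \ge d+1$, again a contradiction. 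Finally the degenerate case $r = 1$ is handled separately by Proposition \ref{onerelation}, which already tells us $\varphi\dim\Lambda = 2$ (even) whenever global dimension is infinite; and the self-injective case gives $\varphi\dim\Lambda = 0$ by Theorem \ref{thm1}(i). So in all cases $d$ is even.

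One point that needs care is making sure the reduction ``$\varphi\dim\Lambda$ is attained at a single indecomposable and equals $\rho$ of it, modulo an initial non-periodic tail'' is stated correctly — $\varphi$ is not additive (Remark \ref{phi}(3)), so the passage from $M^\dagger$ to a single summand must go through Lemma \ref{phiNakayama} and the $\alpha$-stabilization description rather than through additivity. The cleanest formulation: $\varphi\dim\Lambda$ is the smallest $t$ with $\alpha(\Omega^t(M^\dagger)) = \alpha(\Omega^{t+1}(M^\dagger))$, and a strict drop $\alpha(\Omega^{t}(M^\dagger)) > \alpha(\Omega^{t+1}(M^\dagger))$ occurs precisely when some indecomposable summand $Y$ of $\Omega^t(M^\dagger)$ either becomes projective at the next step or gets identified with another summand; so $\varphi\dim\Lambda$ is the last $t$ at which such a $Y$ exists, and for such a $Y$ one has $\Omega^t(X)\cong Y$ for the indecomposable $X = M_i$ it descends from, with $t = \rho(X)$ (infinite pdim case) or $t = \pdim X$ (finite case).

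\textbf{Main obstacle.} I expect the main obstacle to be the bookkeeping in the finite-projective-dimension case: one must verify that the module $\Omega^d(X)$ produced at the maximal step really does fall into one of the three types of \eqref{projtypes} (so that \ref{tip1}--\ref{tip3} apply verbatim), and — for the type-1 and type-2 conclusions — check that the newly built $\tilde X$ genuinely achieves $\Omega^{d+1}(\tilde X)\in\bf\Omega^{per}$ with $\Omega^d(\tilde X)\notin\bf\Omega^{per}$ rather than stabilizing earlier, which is where the ``$X$ is a quotient of a periodic projective'' hypothesis of Proposition \ref{temelteknik} and the parity argument of Remark \ref{teknik1} must be invoked precisely. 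The odd-vs-even asymmetry (Remark \ref{modificationofeven}) is the crux: it is exactly because the $4$-step modification \eqref{4stepres} decreases $\rho$ by $2$ that an odd $\rho$ can always be bumped to $\rho+1$, and this is what makes the maximal value even.
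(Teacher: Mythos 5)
Your proposal is correct and follows essentially the same route as the paper: reduce to $r\geq 2$ via Proposition \ref{onerelation}, then rule out an odd maximal value by bumping it to an even one — using Proposition \ref{modificationofodd} when the extremal module has infinite projective dimension and Propositions \ref{tip1}--\ref{tip3} (via the trichotomy of \ref{projtypes}) when it has finite projective dimension, with Remark \ref{modificationofeven} handling the even-$\findim$ subcase. Your explicit discussion of how $\varphi\dim\Lambda$ is read off from the $\alpha$-stabilization of $\Omega^t(M^{\dagger})$ is a point the paper leaves largely implicit, but the substance of the argument is identical.
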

\begin{proof} By theorem \ref{onerelation}, we can assume that $r\geq 2$. \\
We analyze two cases separately: either $\varphi\dim\Lambda>\findim\Lambda$ or $\varphi\dim\Lambda=\findim\Lambda$. 
In the case of inequality it is enough to study resolutions of modules of infinite projective dimension.

We have shown that if an indecomposable module $M$ of infinite projective dimension reaches  ${\bf\Omega}^{per}$ at odd number steps $2q-1$, there exists $\tilde{M}$ such that $\tilde{M}$ reaches ${\bf\Omega}^{per}$ in $2q$ steps \ref{modificationofodd}. We conclude that $\varphi\dim\Lambda$ is always an even number if $\varphi\dim\Lambda >\findim\Lambda$.\\
We will show that the equality cannot  happen if $\findim\Lambda$ is odd. 
Assume to the contrary that there exists a module $X$ such that $\findim\Lambda=\pdim X$ and it is an odd number.
By the types of projectives in \ref{projtypes}, and propositions \ref{tip1}, \ref{tip2}, \ref{tip3}, it is guaranteed that $\varphi\dim\Lambda>\findim\Lambda$ by the construction of module $\tilde{X}$ satisfying $\rho(X)>\pdim X$. Hence they are not equal to each other. If $\pdim X$ is an even number, then by remark \ref{modificationofeven}, either $\varphi\dim\Lambda=\findim\Lambda$ or $\varphi\dim\Lambda=2+\findim\Lambda$, and in the both cases they are even numbers. 
\end{proof}

\subsection{Example}

We want to illustrate $\bf\Delta$-filtration and modification of resolutions in the following example. Let $\Lambda$ be cyclic Nakayama algebra over $N=8$ vertices and given by the relations:
\begin{align*}
\boldsymbol\alpha_3\boldsymbol\alpha_2\boldsymbol\alpha_1\boldsymbol\alpha_8\boldsymbol\alpha_7\boldsymbol\alpha_6\boldsymbol\alpha_5\boldsymbol\alpha_4\boldsymbol\alpha_3\boldsymbol\alpha_2\boldsymbol\alpha_1=0\\
\boldsymbol\alpha_6\boldsymbol\alpha_5\boldsymbol\alpha_4\boldsymbol\alpha_3\boldsymbol\alpha_2\boldsymbol\alpha_1\boldsymbol\alpha_8\boldsymbol\alpha_7\boldsymbol\alpha_6\boldsymbol\alpha_5\boldsymbol\alpha_4=0\\
\boldsymbol\alpha_8\boldsymbol\alpha_7\boldsymbol\alpha_6\boldsymbol\alpha_5\boldsymbol\alpha_4\boldsymbol\alpha_3\boldsymbol\alpha_2\boldsymbol\alpha_1\boldsymbol\alpha_8\boldsymbol\alpha_7\boldsymbol\alpha_6\boldsymbol\alpha_5=0\\
\boldsymbol\alpha_2\boldsymbol\alpha_1\boldsymbol\alpha_8\boldsymbol\alpha_7\boldsymbol\alpha_6\boldsymbol\alpha_5\boldsymbol\alpha_4\boldsymbol\alpha_3\boldsymbol\alpha_2\boldsymbol\alpha_1\boldsymbol\alpha_8\boldsymbol\alpha_7=0
\end{align*}

$\bf\Delta$-modules are: $\Delta_1\cong\begin{vmatrix}
   S_1 \\
   S_2  
  \end{vmatrix},\quad\Delta_2\cong\begin{vmatrix}
  S_3
  \end{vmatrix},\quad\Delta_3\cong\begin{vmatrix}
   S_4\\
   S_5\\
   S_6  
  \end{vmatrix},\quad\Delta_4\cong\begin{vmatrix}
   S_7\\
   S_8 
  \end{vmatrix}$.

In terms of $\bf\Delta$-filtration $\bf\Delta$-projectives are:
  \begin{align}
  P_1\cong\begin{vmatrix}
   \Delta_1 \\
   \Delta_2 \\
   \Delta_3\\
   \Delta_4\\
   \Delta_1\\
   \Delta_2
  \end{vmatrix},\qquad P_3\cong\begin{vmatrix}
   \Delta_2 \\
   \Delta_3 \\
   \Delta_4 \\
   \Delta_1\\
   \Delta_2\\
   \Delta_3 
  \end{vmatrix}, \qquad P_4\cong\begin{vmatrix}
  \Delta_3\\
  \Delta_4\\
  \Delta_1\\
  \Delta_2\\
  \Delta_3
  \end{vmatrix},\qquad P_7\cong\begin{vmatrix}
  \Delta_4\\
  \Delta_1\\
  \Delta_2\\
  \Delta_3\\
  \Delta_4\\
  \Delta_1
  \end{vmatrix}
  \end{align}
 By direct computation, $\varphi\dim\Lambda$ is $6$. But we want to illustrate modifications of resolutions. Recall notation $\Delta^j_i$ for the module with $\bf\Delta$-soc $\Delta_i$ and $\bf\Delta$-length $j$.

It is easy to check $\Omega(\Delta^1_2)\cong P_4$ and $\Delta^2_3\in{\bf\Omega}^{per}$. If we apply the proposition \ref{tip1}, we get the module $\Delta^5_4$  and $\rho(\Delta^5_4)=2>\pdim\Delta^1_2$. Observe that $\rho(\Delta^1_1)=3$. If we apply the proposition \ref{modificationofodd}, we get module $\Delta^5_4$, and $\rho(\Delta^5_4)=4$.

%%commentedout above

\section{Upper Bound For $\varphi$-dimension }
\subsection{Proof of the Theorem (B)}
Aim of this section is to prove:

\begin{theorem}\label{sharpbound}
Let $\Lambda$ be a cyclic Nakayama algebra given by $r$ relations and of infinite global dimension. Then a sharp bound for $\varphi\dim\Lambda$ is equal to $2r$.
\end{theorem}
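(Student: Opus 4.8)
The plan is to prove the two inequalities $\varphi\dim\Lambda \leq 2r$ and $\varphi\dim\Lambda = 2r$ for some algebra with $r$ relations (sharpness) separately. For the upper bound, I would first reduce everything to $\bf\Delta$-modules: by Corollary \ref{topsocdelta}, every syzygy $\Omega^i(X)$ with $i \geq 2$ is a $\bf\Delta$-module, so after two steps all the action takes place inside the category of $\bf\Delta$-modules, on which $\bf\Delta$-length, $\bf\Delta$-top and $\bf\Delta$-socle behave exactly like length, top and socle of ordinary uniserial modules over a Nakayama algebra with $r$ "vertices" (the $\Delta_i$, cyclically ordered by Corollary \ref{Deltas}). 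The key point is that the $\bf\Delta$-projectives (the projective covers appearing in resolutions past step $2$) number at most $r$, since there are $r$ classes $cl(j)$ and each contributes one maximal projective; more precisely one counts, via Notation \ref{classes} and the remark following it, that there are exactly $r$ relevant $\bf\Delta$-projective modules. Then I would invoke Gustafson's bound \cite{gust} in the $\bf\Delta$-world: a Nakayama algebra on $n$ simples of finite global dimension has global dimension $\leq 2n-2$, but here I want the $\varphi$-flavoured statement. The cleanest route is: by Theorem \ref{even} $\varphi\dim\Lambda$ is even, and by Lemma \ref{phiNakayama} it equals the first $t$ at which $\alpha(\Omega^t(M^\dagger))$ stabilizes; each step past step $2$ strictly drops $\alpha$ until we reach ${\bf\Omega}^{per}$ (Lemma \ref{alpha}, Proposition \ref{nonperiodic}), and the number of non-periodic $\bf\Delta$-modules that can appear is controlled by the $\bf\Delta$-lengths of the $r$ $\bf\Delta$-projectives, which telescopes to give $\rho \leq 2r$ for every module.

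More concretely, for the upper bound I would argue: let $X$ be indecomposable of infinite projective dimension and consider $\Omega^2(X)$, a $\bf\Delta$-module with $\bf\Delta$-top some $\Delta_j$. Running the resolution inside $\bf\Delta$-modules, at each pair of steps the $\bf\Delta$-top advances by one position in the cyclic order of the $\Delta_i$'s, and the $\bf\Delta$-length is bounded by the longest $\bf\Delta$-projective. A counting argument — entirely parallel to the proof of the theorem "${\bf\Delta}$ contains a projective $\Rightarrow \gldim < \infty$" earlier in the excerpt, where $\pdim D \leq d$ was proven by induction on the cyclic distance $d$ — shows that within at most $r$ cyclic steps (hence $2r$ syzygy steps, counting from $\Omega^2$, but the bookkeeping must be done carefully so the "$2$" from Corollary \ref{topsocdelta} is absorbed) either the resolution hits a projective or it enters ${\bf\Omega}^{per}$. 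For modules of finite projective dimension one uses the same counting plus the classification of the three projective types in \eqref{projtypes}. Combining with $\varphi\dim\Lambda = \max(\findim\Lambda, \sup_X \rho(X))$ — which follows from the periodic-syzygy analysis in section 4 — gives $\varphi\dim\Lambda \leq 2r$.

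For sharpness I would exhibit, for each $r$, an explicit cyclic Nakayama algebra with $r$ relations and $\varphi\dim\Lambda = 2r$; the $N=8$, $r=4$ example worked out just before Theorem \ref{sharpbound} (where $\varphi\dim\Lambda = 6$... note this is $2r-2$, so in fact the example to present must be chosen so the bound $2r$ is attained, presumably a "maximally staircased" family of relations analogous to the $N=5$ and $N=8$ examples but tuned so that the chain of strict drops in $\alpha$ has full length $2r$). The construction should make the $\bf\Delta$-projectives have strictly increasing $\bf\Delta$-lengths around the cycle, so that the module $\Delta_1^1$ (or $P_{max}$ of the longest class) has $\rho$ exactly $2r$; one then checks directly that $\alpha(\Omega^t(M^\dagger))$ strictly decreases for $t = 2, 3, \dots, 2r$ and stabilizes at $t = 2r$, using Lemma \ref{alpha} and Proposition \ref{nonperiodic}.

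The main obstacle I expect is the upper bound's bookkeeping: reconciling the "$+2$" coming from Corollary \ref{topsocdelta} (syzygies are $\bf\Delta$-modules only from step $2$ on) with a clean "$2r$" rather than "$2r+2$" or "$2r-2$". This requires a sharper analysis at steps $0$ and $1$ — showing that the passage $X \to \Omega(X) \to \Omega^2(X)$ already consumes enough of the cyclic budget, or equivalently that one of the $r$ cyclic steps is "free" — together with the finite-projective-dimension case where Gustafson's classical bound $2r - 2$ (for $r$-relation Nakayama algebras of finite global dimension) must be shown to be dominated by $2r$, which is where the remark after Theorem (B) about extending via \cite{gust} enters. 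The odd-case exclusion from Theorem \ref{even} is essential to squeeze the even bound down to exactly $2r$ rather than $2r+1$.
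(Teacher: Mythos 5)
Your overall strategy --- pass to $\bf\Delta$-modules via Corollary \ref{topsocdelta}, run a Gustafson-type argument on the $r$ objects $\Delta_1,\dots,\Delta_r$, and absorb the initial two syzygy steps into the bound --- is the paper's route, so the architecture is right. But the quantitative core of your argument is wrong, and it is precisely the step that produces the number $2r$. You assert that ``at each pair of steps the $\bf\Delta$-top advances by one position in the cyclic order of the $\Delta_i$'s.'' It does not. If $\Omega^{2t}(M)$ has $\bf\Delta$-top $\Delta_i$, its projective cover is the $\bf\Delta$-projective with that top, of some $\bf\Delta$-length $c_i$, and one computes from Corollary \ref{topsoc} that the $\bf\Delta$-top of $\Omega^{2t+2}(M)$ is $\Delta_{[i+c_i]}$: the index jumps by the $\bf\Delta$-length of the projective cover, not by $1$. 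The paper's proof is built on iterating the self-map $f(i)=[i+c_i]$ of $\{1,\dots,r\}$: the descending chain of images $f(\{1,\dots,r\})\supseteq f^2(\{1,\dots,r\})\supseteq\cdots$ stabilizes after some $d\leq r-1$ steps (Gustafson's combinatorial point in \cite{gust}), $f$ permutes the stable image, and one then argues \emph{backwards} along the resolution --- a projective whose index lies in the stable image recurs in the resolution, hence lies in $\cP^{per}$, hence the syzygy one step earlier is already periodic --- to get $\rho(M)\leq 2d\leq 2r-2$ for every $\bf\Delta$-module $M$, whence $\rho\leq 2r$ in general. Your appeal to the induction ``$\pdim D\leq d$'' from the theorem on $\bf\Delta$ containing a projective is not a substitute: that induction measures cyclic distance to a projective $\Delta_r$ and proves finiteness of projective dimension, whereas here one must bound the transient length of the dynamical system $f$ before its periodic part, which is a different (and the essential) fact.

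Two smaller gaps. First, you never actually produce the sharpness example: you correctly observe that the worked $N=8$, $r=4$ example only gives $\varphi\dim\Lambda=6=2r-2$ and that a different family is needed, but you stop at ``presumably a maximally staircased family''; the paper exhibits the Kupisch series $(2N+1,\dots,2N+1,2N)$ on $N$ vertices, which has $r=N-1$ and a module with $\rho=\pdim=2N-2=2r$. Second, your worry about whether the bookkeeping yields $2r$, $2r+2$ or $2r-2$ is resolved exactly by the inequality $d\leq r-1$ above: the bound at the $\bf\Delta$-level is $2r-2$ and the two initial steps from Corollary \ref{topsocdelta} bring it to $2r$; without identifying $d\leq r-1$ your argument cannot close this off.
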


To prove this, we will modify techniques which were used to show:
\begin{theorem}[W. Gustafson \cite{gust}]
Let $\Lambda$ be a cyclic Nakayama algebra over $N$ nodes of finite global dimension. Then global dimension of $\Lambda$ is smaller or equal than $2N-2$.
\end{theorem}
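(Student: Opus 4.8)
The statement to prove is Gustafson's theorem: for a cyclic Nakayama algebra $\Lambda$ over $N$ nodes of \emph{finite} global dimension, $\gldim\Lambda\leq 2N-2$. The plan is to bound the projective dimension of an arbitrary indecomposable module by analyzing how the socle of its syzygies moves around the cyclic quiver. The key structural fact, available from Section \ref{projectives} and Proposition \ref{deltafilt}(i), is that for every indecomposable $X$ and every $i\geq 1$ with $\Omega^i(X)\neq 0$, the syzygy $\Omega^i(X)$ is a submodule of the projective cover $P(\Omega^{i-1}(X))$, and hence $soc\,\Omega^i(X)\cong soc\,P(\Omega^{i-1}(X))$ lies in $\cS$, the set of socles of projectives. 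Since all nonzero syzygies are indecomposable uniserial, each is determined by its socle together with its length.

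First I would set up the following invariant. For a nonzero indecomposable uniserial module $Y$, record the pair consisting of $soc\,Y$ (a vertex) and the length $\ell(Y)$. The heart of the argument is to track, as we pass from $\Omega^{i-1}(X)$ to $\Omega^{i}(X)$, how the top of the syzygy (equivalently the length, since the socle is pinned down in $\cS$) behaves. Using Corollary \ref{topsoc}(1), for the nonsplit sequence $0\rightarrow \Omega^i(X)\rightarrow P(\Omega^{i-1}(X))\rightarrow \Omega^{i-1}(X)\rightarrow 0$ we get that $top\,\Omega^i(X)$ is consecutive in the cyclic ordering to $soc\,\Omega^{i-1}(X)$. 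The strategy is then to show that the tops (or equivalently the starting vertices of the uniserial syzygies) cannot repeat a vertex too many times before the resolution terminates, because finiteness of global dimension forces the resolution to reach a projective in finitely many steps and rules out periodicity.

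The main step is a counting argument. Since $soc\,\Omega^i(X)\in\cS$ for all $i\geq 1$, and since each syzygy is a submodule of a projective sharing its socle, two syzygies $\Omega^i(X)$ and $\Omega^j(X)$ with $i\neq j$ that share both socle and length would be isomorphic; by the argument of Lemma \ref{folklore2} this would produce $\Omega$-periodicity, contradicting $\pdim X<\infty$. Hence along a finite projective resolution the syzygies are pairwise distinct indecomposables. One then bounds their number: for each fixed socle vertex in $\cS$ the possible lengths are constrained, and walking the socles around the $N$-cycle via the consecutiveness from Corollary \ref{topsoc} shows that the socle can complete at most essentially one full revolution of the $N$ vertices before a repeat is forced. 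Carefully accounting for the two syzygy steps needed to return the top to a vertex of $\cS$ (Proposition \ref{deltafilt}(ii)) yields the factor of $2$, and excluding the two extreme positions yields the $-2$, giving $\pdim X\leq 2N-2$.

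\textbf{Main obstacle.} The delicate part is making the counting tight enough to obtain $2N-2$ rather than a weaker bound like $2N$ or $2N-1$. The difficulty is that the naive count — at most $N$ possible socles, each contributing at most two steps — overshoots, and one must use the irredundancy of REL together with the precise consecutiveness statements of Corollary \ref{topsoc} to argue that the first and last possible configurations cannot both be realized by nonprojective syzygies. Concretely, the hard step is showing that when the resolution is longest, the socle sequence must skip at least one vertex (so it cannot visit all $N$ positions in both "phases"), which is exactly what removes the final $2$. This requires combining the cyclic ordering of $\cS$ and $\cS'$ from Proposition \ref{cyclicorder} with the termination of the resolution at a projective, and it is where the finiteness of global dimension must be used most carefully.
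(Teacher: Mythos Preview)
The paper does not prove this theorem. Gustafson's result is quoted from \cite{gust} as a known theorem, and only its \emph{method} is borrowed and adapted to prove the paper's own Theorem \ref{sharpbound} on $\varphi$-dimension. So there is no ``paper's own proof'' of this statement to compare against; the relevant comparison is with the technique the paper imports from \cite{gust}, namely the function $f(i)=[i+c_i]$ on a finite index set that is eventually periodic, together with the observation that the transient before periodicity has length at most one less than the size of the index set.

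Your outline gestures at the right ingredients (syzygies are indecomposable, socles lie in $\cS$, no repetition by Lemma \ref{folklore2}), but the counting step has a genuine gap. You write that ``the socle can complete at most essentially one full revolution of the $N$ vertices,'' yet $soc\,\Omega^i(X)$ ranges only over $\cS$, which has $r$ elements, not $N$; so your argument as written would at best yield a bound in $r$, and you never explain how $N$ enters. More seriously, distinct syzygies can share the same socle (they sit in the same projective with different lengths), so ``no repeated socle'' is false and cannot drive the count. Gustafson's actual mechanism is different: one packages two syzygy steps into a self-map $f$ on an index set of size $N$ (tops of projectives, in the original; $\bf\Delta$-tops of $\bf\Delta$-projectives, in the paper's adaptation), and uses the elementary fact that any self-map of an $N$-element set becomes periodic after at most $N-1$ iterations. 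Finite global dimension forces termination before periodicity, giving $\pdim\leq 2(N-1)$. Your ``revolution'' heuristic and the ad hoc accounting for the $-2$ do not substitute for this; you would need to introduce the index function explicitly and invoke the transient/periodic decomposition to make the bound rigorous.
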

Here, we use the notation introduced at \cite{gust}.
Let $[j]$ denote the least positive residue of $j$ modulo $r$ whenever $j>0$. Otherwise $[r]=r$. Let $f$ be a function from $[1,r]$ to $[1,r]$ defined by $f(i)=[i+c_i]$, where $c_i$ is the $\bf\Delta$-length of $\bf\Delta$-projective module $P_i$. The fixed point set is nonempty: $Y=\left\{s\in [1,r]\,\vert f^t(s)=s\,\text{for some}\,t\in\Zz^{+}\right\}$ \label{set}. Now we give proof of theorem \ref{sharpbound}:
\begin{proof}
 The function $f$ is permutation on the set $\left\{f^d(1),\ldots,f^d(r)\right\}$ for some $d$ where $d\leq r-1$. Consider the projective resolution:
\begin{align*}
\cdots P_{f^2(j)}\rightarrow P_{f^2(i)}\rightarrow P_{f(j)}\rightarrow P_{f(i)}\rightarrow P_{j}\rightarrow P_{i}\rightarrow M
\end{align*}
We can assume that $M$ has $\bf\Delta$-filtration by proposition \ref{deltafilt}. Moreover it is enough to consider $\pdim M=\infty$, simply $\varphi\dim\Lambda\geq \findim\Lambda$ by the remark \ref{fidimfindim}.\\
The index of any projective module appearing in the resolution after $P_{f^d(i)}$ is periodic i.e. there exists $k$ such that if $f^{d}(i)=m$ then $f^{d+k}(i)=m$. Assume that, without loss of generality, $\Omega^{2d+2x}$ is a periodic syzygy i.e. $\Omega^{2d+2x}(M)\in\bf\Omega^{per}$. It is submodule of $P_{f^{d+x-1}(j)}$. Since $f$ permutes its index, $P_{f^{d+x-1}(j)}$ appears in the projective resolution again, therefore it is a periodic projective i.e. $P_{f^{d+x-1}(j)}\in\cP^{per}$. Hence $\Omega^{2d+2x-1}(M)$ is periodic syzygy again. These arguments work recursively, until $P_{f^d(i)}$. If $\Omega^{2d+1}(M)$ is periodic syzygy then $\Omega^{2d}(M)$ is also because $P_{f^d(i)}$ is in $\cP^{per}$. However, $f^{d-1}(j)$ does not have to be fixed by $f$, $P_{f^{d-1}(j)}$ does not have to appear in $\cP^{per}$.
We get $\rho(M)=2d$. Since $d$ can be at most $r-1$ by \cite{gust}, and $M\in\Omega^2$, the upper bound is $\varphi\dim\Lambda=\rho(M)+2=2r$.
\end{proof} 

Now we give an example inspired by \cite{gust} to show that it is a sharp bound. Consider the Nakayama algebra $\Lambda$ on $N$ vertices with Kupisch series $(2N+1,2N+1,\ldots,2N+1,2N)$. This has $r=N-1$, and $\varphi\dim\Lambda=\findim\Lambda=2r$, since the module of length $N+1$ with socle $S_N$ has projective dimension $2N-2$ and $\rho(S_N)=2N-2$.

We give another proof of theorem \ref{onerelation} as a corollary of theorem \ref{sharpbound}:
\begin{corollary} Let $\Lambda$ be a cyclic Nakayama algebra described by one monomial relation. Then $\varphi\dim\Lambda\leq 2$.
\end{corollary}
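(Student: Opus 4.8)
The statement is essentially the case $r=1$ of Theorem \ref{sharpbound}, supplemented by a check of the finite global dimension case, and I would split the argument accordingly. Observe first that a cyclic Nakayama algebra defined by a single monomial relation is never self-injective: by Remark \ref{self-injective} a self-injective one would need its $N\ge 3$ projectives to lie in $N$ distinct classes, hence $N$ relations in an irredundant system. So the value $\varphi\dim\Lambda=0$ does not occur and we may assume $\Lambda$ is non self-injective.

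If $\gldim\Lambda=\infty$ I would invoke Theorem \ref{sharpbound} with $r=1$, which gives $\varphi\dim\Lambda\le 2r=2$ directly. Inspecting that proof when $r=1$: the permutation $f$ acts on the one-element set $\{1\}$, so the integer $d$ for which $f$ permutes $\{f^d(1),\dots,f^d(r)\}$ may be taken to be $0$; hence every indecomposable ${\bf\Delta}$-module $M$ of infinite projective dimension has $\rho(M)=2d=0$, and therefore $\varphi\dim\Lambda=\rho(M)+2=2$. Alternatively this case is immediate from Proposition \ref{onerelation}, which already asserts $\varphi\dim\Lambda=2$.

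If $\gldim\Lambda<\infty$, then $\varphi\dim\Lambda=\gldim\Lambda$ (as recorded just after Definition \ref{Lambda}), so it suffices to see $\gldim\Lambda\le 2$. With a single relation there is one class of projectives, all having socle $S_{k_2}$; hence for any indecomposable $X$ with $\Omega^2(X)\ne 0$ the module $\Omega^2(X)$ has socle $S_{k_2}$ by Proposition \ref{deltafilt}(i) and top $S_{k_2+1}$ by Corollary \ref{topsoc}(1), so it is the unique uniserial module with that top and socle, namely the length-$N$ module $D$ of Proposition \ref{onerelation}. Finiteness of $\gldim\Lambda$ forces $D$ to be projective, whence $\Omega^2(X)$ is projective for every $X$ and $\gldim\Lambda\le 2$. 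Combining the two cases yields $\varphi\dim\Lambda\le 2$. No genuine obstacle arises; the only point deserving care is that the standing conventions of the paper assume $r\ge 2$, so the degenerate case $r=1$ must be treated either via Proposition \ref{onerelation} or by the above direct inspection of the proof of Theorem \ref{sharpbound}.
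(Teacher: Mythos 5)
Your proof is correct and takes essentially the same route as the paper, whose entire argument is the single observation that Theorem \ref{sharpbound} with $r=1$ gives the bound $2r=2$. Your supplementary handling of the finite global dimension case (and the check that $\Lambda$ cannot be self-injective) is extra care that the paper's one-line proof leaves implicit --- worthwhile, since Theorem \ref{sharpbound} is stated only for infinite global dimension and the standing conventions assume $r\geq 2$ --- but it does not constitute a different method.
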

\begin{proof}
There is one relation, this forces that $\varphi\dim\Lambda\leq 2$ by theorem \ref{sharpbound}. 
\end{proof}

\begin{remark} In \cite{ringel} proposition 6, Ringel shows that assuming projectives are longer than the number of non-isomorphic simple modules, the Gorenstein dimension of a cyclic Nakayama algebra is $2d$ where $d$ is the smallest integer which makes the function $f$ periodic on the set \ref{set}. By the theorem \ref{even} and the result of \cite{ralf}, we obtain: If a cyclic Nakayama algebra of infinite global dimension is Gorenstein then its Gorenstein dimension is always an even number. 
\end{remark}

%\bibliography{referencelistNakayama}

\begin{thebibliography}{9}
\bibliographystyle{alpha}
%\bibitem{birge} 
%Huisgen, Birge Zimmermann (1995). The finitistic dimension conjectures—a tale of 3.5 decades.



\bibitem{ralf}
Elsener, Ana Garcia, and Ralf Schiffler. "On syzygies over 2-Calabi–Yau tilted algebras." Journal of Algebra 470 (2017): 91-121.

\bibitem{gust}
Gustafson, William H. "Global dimension in serial rings." Journal of Algebra 97.1 (1985): 14-16.

\bibitem{lanz}
Huard, F., Lanzilotta, M. "Self-injective right artinian rings and Igusa Todorov functions." Algebras and Representation Theory (2013): 1-6.

\bibitem{todo}
K. Igusa and G. Todorov On the finitistic global dimension conjecture for artin algebras. Representations of algebras and related topics, 201–204. Fields Inst. Commun., 45, Amer. Math. Soc., Province, RI, 2005.

\bibitem{ringel}
Claus~Michael Ringel.
\newblock The gorenstein projective modules for the nakayama algebras 1.
\newblock {\em Journal of Algebra}, 385:241--261, 2013.

\bibitem{ro}
Rouquier, Raphaël. "Representation dimension of exterior algebras." Inventiones mathematicae 165.2 (2006): 357-367.


\end{thebibliography}

\end{document}